\def\N{\mathbb N} 
\def\c{\mathcal}
\def\k{{\kappa}}
\def\E{\c E}
\def\F{\c F}
\def\X{\c X}
\def\M{\c M}
\def\N{\c N}
\def\Y{\c Y}
\def\P{\c P}
\def\Q{\c Q}
\begin{document}

\newtheorem{theorem}{Theorem}[section]
\newtheorem{lemma}[theorem]{Lemma}
\newtheorem{proposition}[theorem]{Proposition}
\newtheorem{corollary}[theorem]{Corollary}
\newtheorem{problem}[theorem]{Problem}
\newtheorem{construction}[theorem]{Construction}

\theoremstyle{definition}
\newtheorem{defi}[theorem]{Definitions}
\newtheorem{definition}[theorem]{Definition}
\newtheorem{notation}[theorem]{Notation}
\newtheorem{remark}[theorem]{Remark}
\newtheorem{example}[theorem]{Example}
\newtheorem{question}[theorem]{Question}
\newtheorem{comment}[theorem]{Comment}
\newtheorem{comments}[theorem]{Comments}

\newtheorem{discussion}[theorem]{Discussion}

\renewcommand{\thedefi}{}

\long\def\alert#1{\smallskip{\hskip\parindent\vrule%
\vbox{\advance\hsize-2\parindent\hrule\smallskip\parindent.4\parindent%
\narrower\noindent#1\smallskip\hrule}\vrule\hfill}\smallskip}

\def\ff{\frak}
\def\tf{torsion-free}
\def\Spec{\mbox{\rm Spec }}
\def\Proj{\mbox{\rm Proj }}
\def\hgt{\mbox{\rm ht }}
\def\type{\mbox{ type}}
\def\Hom{\mbox{ Hom}}
\def\rank{\mbox{ rank}}
\def\Ext{\mbox{ Ext}}
\def\Tor{\mbox{ Tor}}
\def\Ker{\mbox{ Ker }}
\def\Max{\mbox{\rm Max}}
\def\End{\mbox{\rm End}}
\def\xpd{\mbox{\rm xpd}}
\def\Ass{\mbox{\rm Ass}}
\def\emdim{\mbox{\rm emdim}}
\def\epd{\mbox{\rm epd}}
\def\repd{\mbox{\rm rpd}}
\def\ord{\mbox{\rm ord}}

\def\DD{{\mathcal D}}
\def\EE{{\mathcal E}}
\def\FF{{\mathcal F}}
\def\GG{{\mathcal G}}
\def\HH{{\mathcal H}}
\def\II{{\mathcal I}}
\def\LL{{\mathcal L}}
\def\MM{{\mathcal M}}
\def\PP{{\mathcal P}}
\def\P{{\mathbb{P}}}
\def\k{\mathbb{k}}

\def\R{\mathbb{R}}

\def\C{{\bf C}}

\title[Paths of rectangles]{Paths of   rectangles  inscribed in lines over fields}

\author{Bruce Olberding} 
\address{Department of Mathematical Sciences, New Mexico State University, Las Cruces, NM 88003-8001}

\email{bruce@nmsu.edu}

\author{Elaine A.~Walker}
\address{1801 Imperial Ridge, Las Cruces, NM 88011}

\email{miselaineeous@yahoo.com}

\begin{abstract} 
We study   rectangles inscribed in lines in the plane by parametrizing these rectangles in two ways, one involving slope and the other aspect ratio. This produces two paths, one that finds rectangles with specified slope and the other rectangles with specified aspect ratio. We describe the geometry of these paths and its dependence on the choice of four lines. Our methods are algebraic and work over an arbitrary field. 
 \end{abstract}

\subjclass{Primary 51N10; Secondary 11E10}

\thanks{\today}

\maketitle

\section{Introduction} 

Given four lines in the plane that are not all parallel, there is a rectangle whose vertices lie on these lines; i.e., the rectangle is {inscribed} on the lines.
 There are always more inscribed rectangles nearby and all of these rectangles appear as part of a path of inscribed rectangles through the configuration of lines. 
The purpose of this article is to account for all these rectangles by describing the paths they take through the configuration, and to do so by locating them by their slope and aspect ratio. Our methods are algebro-geometric, elementary and work over an arbitrary field.  While it is possible to prove some of the results of the paper computationally, the equations in raw form  are often unwieldy and opaque, so  
we have sought to give algebraic insight into how the geometry of the solution depends on the initial four lines.

The present article continues our study  \cite{OW}  but is mostly independent of this previous work, where the problem of finding inscribed rectangles  was recast as that  of  finding the intersection of  hyperbolically rotated  cones in ${\mathbb{R}}^3$.  Schwartz \cite{Sch} has also recently treated rectangles inscribed in lines in the case in which none of the lines involved are parallel or perpendicular to each other.  
 The indirect motivation for  both Schwartz's work and ours is the so-called square peg problem---a problem that remains open in full generality---of finding a square inscribed in every simple closed curve in the plane.  
By a theorem of Vaughn \cite[p.~71]{Mey}, every simple closed curve in the plane has a rectangle inscribed it. In particular, every polygon has a rectangle inscribed in it, and in fact has a square inscribed in it \cite{Emc}. While Vaughn's proof guarantees that an inscribed rectangle must exist, it does not give a means for finding the rectangle, even when the curve in question is a polygon. To find a rectangle inscribed in a polygon amounts to finding a rectangle inscribed in four possibly non-distnct lines, which brings us to the current problem of finding {\it all} rectangles inscribed in four lines, as well as locating these rectangles by their slopes and aspect ratios.

Schwartz's approach is a mix of computational and  topological methods based on the geometry of the real plane, and so while some of his methods don't extend to arbitrary fields (or {\it because} they don't extend to fields), the difference of his methods when contrasted with ours shows 
some of the richness of the problem. The two approaches often differ  in methods (algebro-geometric properties of fields vs.~topological and analytic properties of the real plane)
   and in the
  directions these results are carried in search of finer-grained consequences. 
We explain in the course of the paper the points of overlap with Schwartz's article, but briefly, Schwartz works over the field of real numbers  and  under the assumption that 
 none of the lines $A,B,C,D$ are parallel or perpendicular.  
  Avoiding  parallel lines is needed to reduce the study of inscribed rectangles to the study of their centers only, and hence to points in $\R^2$. Without this restriction, two inscribed rectangles may share the same center and so information can be lost when considering only rectangle centers. 
  
  We allow parallel and perpendicular lines throughout, and to do so we view inscribed rectangles as points in a four-dimensional projective space. We find two paths of inscribed rectangles in this space, one parameterized by rectangle slope and the other by aspect ratio. Our  point of view permits paths of rectangles to pass through rectangles inscribed at infinity for the four lines.  We study these rectangles in some detail because they help shed light on the rectangles inscribed in the four lines in affine space.
 In the case in which the four lines form a non-degenerate configuration, a case explained shortly  and which is described in Section~8, Schwartz has versions of these parameterizations in affine space and in his setting of no parallel or perpendicular lines. See Section 6 for a discussion of this.    
  
  Some calculations involving the data of the four lines are necessary, but we have tried to frontload these  early in the paper so as to give streamlined and more conceptual proofs later. One of the main goals of the paper is to pinpoint  the essential quantities needed   
 for finding the paths of  inscribed rectangles. These quantities turn out to be the slopes of the diagonals of the configuration.
 
 \begin{figure}[h] \label{diagonalspar}
 \begin{center}
 \includegraphics[width=0.85\textwidth,scale=.09]{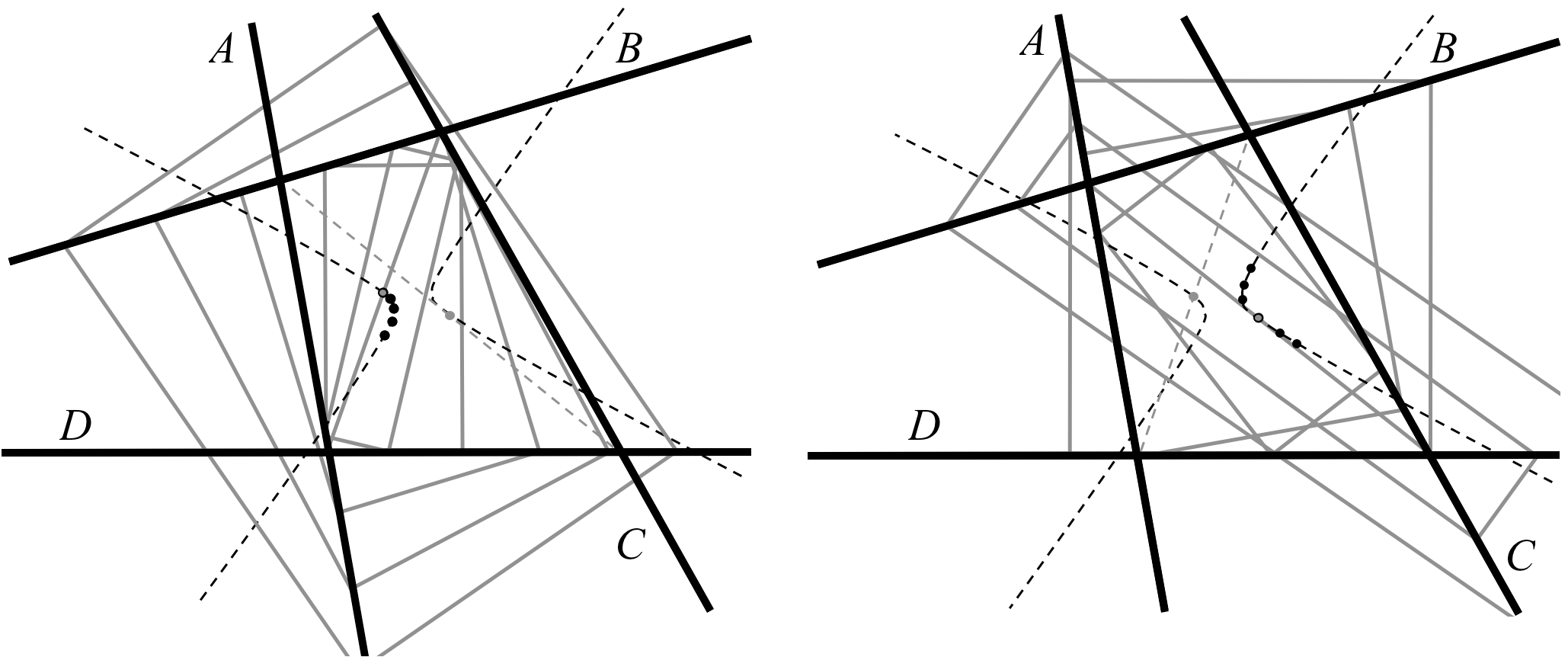} 
 \end{center}
 \caption{Two sets of rectangles inscribed in four lines. The hyperbola  is the set of centers of the inscribed rectangles.  
The fact that the hyperbola here is non-degenerate (because its diagonals are not orthogonal; see Theorem~\ref{deg char}) implies that no two rectangles inscribed in this configuration have the same slope or the same aspect ratio (see Section~8).  
  }
\end{figure}

\begin{figure}[h] \label{degenerateconfig}
 \begin{center}
 \includegraphics[width=0.85\textwidth,scale=.09]{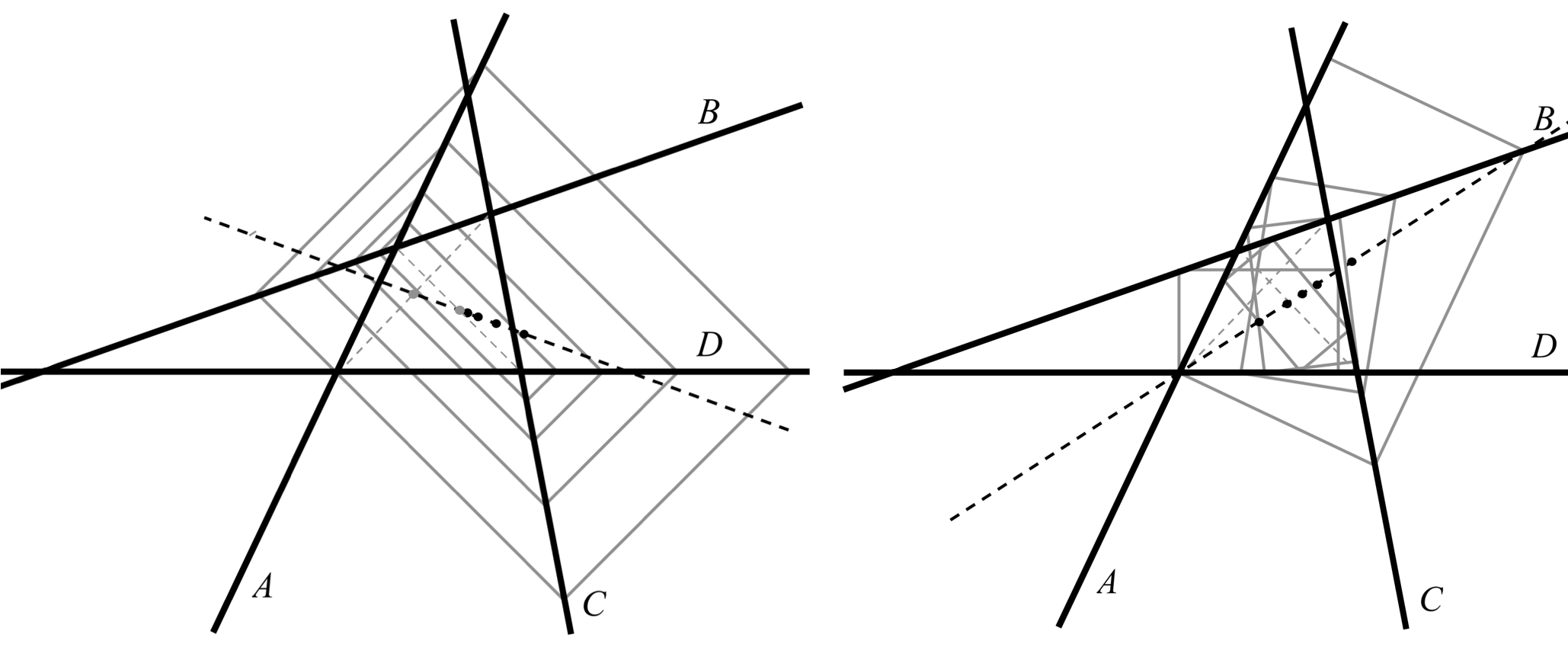} 
 \end{center}
 \caption{This configuration is degenerate because its diagonals are orthogonal (Theorem~\ref{deg char}). The darker dotted line  through each figure is a set of centers of inscribed rectangles. In the first figure, slope remains constant while aspect ratio changes, and in the second aspect ratio remains constant while slope changes. This phenomenon only happens in the degenerate case; compare to Figure 1. The centers of the rectangles in the first figure lie on the Gauss-Newton line through the midpoints of the diagonals, and in the second the centers lie on a line that is parallel to the diagonal through the intersection of $A$ and $C$ and that of $B$ and $D$ (Theorem~\ref{Newton}).   }
\end{figure}

For the purpose of giving  intuition, we briefly describe the behavior of the rectangles inscribed in four lines $A,B,C,D$.  Figure 1 shows two sets of rectangles inscribed on four lines in ${\mathbb{R}}^2$.  These examples suggest a path of rectangles, and it is with the description of this path that the paper is concerned. 
The rectangles whose vertices are inscribed in sequence in the lines $A,B,C,D$ can be viewed as points in the set  $\C = A \times B \times C \times D$. For additional flexibility, we consider not just $\C$ but all scaled copies of $\C$ and their rectangles. The union of these sets is  a five-dimensional vector space and so we may consider its projective space $\P\C$.  
For our purposes, $\P\C$ turns out to be the place to work. Thus we shift 
 focus to  projective space and work out the geometry of the inscribed rectangles there.

We show in Section 3 that the points in $\P\C$ that  represent the inscribed rectangles   comprise a plane curve of degree~$2$. This curve, as we show in Sections~5 and~6, 
 is the union of two paths: a {\it slope path} given by a regular map ${\mathbb{P}}^1 \rightarrow \P\C$ that sends a ratio represented as a point on the projective line  ${\mathbb{P}}^1 $ to  a rectangle having this slope, and an {\it aspect path} given by a regular map ${\mathbb{P}}^1  \rightarrow \P\C$ that sends a ratio   to a rectangle having this aspect ratio. These paths pick up additional rectangles, those that are inscribed at infinity for the configuration, and these rectangles are the subject of Section 4.  In any case, 
  the slope and aspect paths solve the problem of finding the rectangles of specified slope and aspect ratio. In Sections~5 and 6 we give succinct versions of the defining polynomials for these paths in order to exhibit  some of the internal symmetry of the algebra of the paths. The formulations of these polynomials belong  to our main results. The two paths share a number of formal features that suggest more could be learned  about the relationship between them.

 In Sections 7 and 8 we show that the slope and aspect paths either (a) have exactly the same image in $\P\C$, and thus find the same rectangles, or (b) they are distinct lines of rectangles. Case (b) occurs precisely if the diagonals of the configuration  are orthogonal (Theorem~\ref{deg char}). 
In  case (a), the affine piece of the curve of rectangles in $\P\C$ is a non-degenerate conic (Theorem~\ref{deg char}), which if  $\k = {\mathbb{R}}$ is a hyperbola (Theorem~\ref{planar}).  
%
Otherwise, in case (b), each path is a line, so the two lines form a degenerate conic.  (It can happen that one of these lines is the line at infinity.) Case (b) is illustrated by the examples in Figure 2.  
In Section 9 we go further with the degenerate case and interpret the paths of the centers of the rectangles in these two lines geometrically.

 A final word on generality: Even though our main motivation is the case in which $\k$ is the field $\R$ of real numbers, we work with an arbitrary field for two reasons. First, doing so comes at no extra expense since our arguments are algebro-geometric and need no modification to be cast in the general setting of fields. 
So while we have no specific application in mind for, say, rectangles over finite fields, our approach does apply to such rectangles without any additional effort so it seems worthwhile to note this, as also it emphasizes the purely algebraic nature of our arguments. Our approach is therefore similar in spirit to the texts \cite{Kap} and \cite{Sam} of Kaplansky and Samuel. 
 Second, working over a field makes it possible to find rectangles whose vertices are restricted to a subfield of the real numbers. For example, by applying the results of the paper to the field of rational numbers, we find inscribed rectangles in the real plane whose vertices have rational coordinates.   
 
We used  Maple$^{\rm TM}$ to assist with calculations and examples. 








\section{Complete quadrilaterals in the projective plane}

Throughout the paper $\k$ denotes a field. In only a few instances the choice of field matters, and it is only in these cases we put additional hypotheses on $\k$.  
The reductions made in the following standing assumptions, which will be in force for the rest of the article, simplify the presentation and as explained below essentially result in no loss of generality when working with four not necessarily distinct lines $A,B,C,D$ that (a) are not all parallel, (b) do not all go through the same point, and (c) are not vertical. The condition (c) is used to simplify equations and in almost all situations,  reflection 
about a  line through the origin allows us to reduce to this case. For example, this is possible if the field has characteristic other than $2$ and  at least 9 elements. This follows from  
 \cite[Theorem 14, p.~18]{Kap}.

\begin{quote} {\bf Standing assumptions}. 
Throughout this article we work with two pairs of lines $A,C$ and $B,D$ such that $B \ne D$; 
$C$ and $D$  are not parallel;
and there are  constants $m_A,m_B,m_C,m_D,b_A \in \k$ such that the equations defining the lines $A,B,C,D$ are 
$$A: \: y = m_Ax+b_A \:\:\: B: y=m_Bx+1 \:\:\:
C: y=m_Cx \:\:\: D: y=m_Dx.$$


 

 

{\noindent}To help with later notation, we also let $b_B =1$ and $b_C=b_D=0$ so that each line $L \in \{A,B,C,D\}$ is defined by an equation of the form $y=m_Lx+b_L$.
The following quantities, which are interpreted in Lemma~\ref{Delta 0}, will  play a fundamental role in our calculations. We write $m_{AB}$ for $m_A-m_B$, $m_{BC}$ for $m_B - m_C$, etc.. 
\smallskip

\hspace{.45in}
$\begin{array}{ll}
e_1= b_Am_B-m_A \:\:\: & \:\:\: e_2 = b_A-1 \\
f_1 = b_Am_{BC}m_D+m_{DA}m_C \:\:\: & \:\:\: f_2=m_{DA}+b_Am_{BC}
\end{array}$




\end{quote}

\smallskip

In seeking rectangles inscribed in  four lines $A$, $B$, $C$, $D$ satisfying (a), (b) and (c) above, 
we can reduce to the  standing assumptions as follows. 
First, we can 
 relabel lines within the pairs $A,C$ and $B,D$ to guarantee that $C$ and $D$ are not parallel. A translation then guarantees that $C$ and $D$ meet at the origin. 
If  $B$ goes through the origin,  we can switch the labels of $A$ and $C$ with those of $B$ and $D$
 so as to arrange  that $B$ does not pass through the origin. Scaling allows us to assume the $y$-intercept of $B$ is $b_B=1$.  Note also that $A$ can be equal to $B,$ $C$ or $D$, and so the standing assumptions  cover the case of rectangles inscribed in $3$ lines. 
 

%

We work often in projective space. For this, we use standard notation for homogeneous coordinates: If 
 $x_0,\ldots,x_n \in \k$, with not all the $x_i$ equal to $0$, then  $$[x_0:\cdots:x_n] = \{(tx_0,\ldots,tx_n):0\ne t \in \k\}.$$
Projective $n$-space, the collection of all such points $[x_0:\cdots:x_n]$, is denoted $\P^n$.  The {\it points at infinity} for $\P^n$ are the points of the form $[x_0:\cdots:x_{n-1}:0]$, where the $x_i$ are in $\k$ and not all zero. 
 With this in mind, in the next definition we view the configuration of the four lines $A,B,C,D$ as defining axes  on which lie the vertices of the inscribed rectangles. We  allow for scaled copies of these axes, something that will be important for defining rectangles in  four-dimensional projective space.

   \begin{definition} \label{first notation}

For each line $L \in \{A,B,C,D\}$ and $w \in \k$, let 
$L_w$ be  the line in $\k^2$ whose equation is $y=m_Lx+b_Lw$. Let 
$${\bf C}_w=A_w \times B_w \times C_w \times D_w, \:\: \: \C=\C_1 = A \times B \times C \times D. $$
Define a subspace  $\P\C$ of  $\P^8$   by 
$$\P\C = \left\{[x_A:y_A:\cdots:x_D:y_D:w] \in \P^8:(x_A,y_A,\ldots,x_D,y_D) \in \C_w\right\}.$$
Alternatively, $\P\C$ can be viewed as the 
the projective space of the five-dimensional   subspace $\bigcup_{w \in \k}\C_w$ of   $\k^8$. 
The {\it points at infinity} for $\P\C$ are the points in $\P\C$ with $w=0$.  


\end{definition}

The special case $\C_0$ (i.e., $\C_w$ where $w=0$) is a configuration of the four lines $A_0,B_0,C_0,D_0$ through the origin. These is the  configuration $\C$ viewed from infinity, and it plays an important role in analyzing rectangles at infinity for $\C$.  


In the next section we will interpret parallelograms and rectangles in the plane as points in $\P\C$, a flexibility that allows us to define rectangles at infinity also. To do so, it is helpful to view the lines $A,B,C,D$ as lines in the projective plane that form a 
 {\it complete quadrilateral} that consists of four lines and six points of intersection, where some of these six points are possibly at infinity. 
These
  six points  give rise to three diagonals.  (Because we work over a field, the notion of a segment of a line may not have meaning, so we view our diagonals as lines rather than segments.)

\begin{figure}[h] \label{diagonals}
 \begin{center}
 \includegraphics[width=0.85\textwidth,scale=.09]{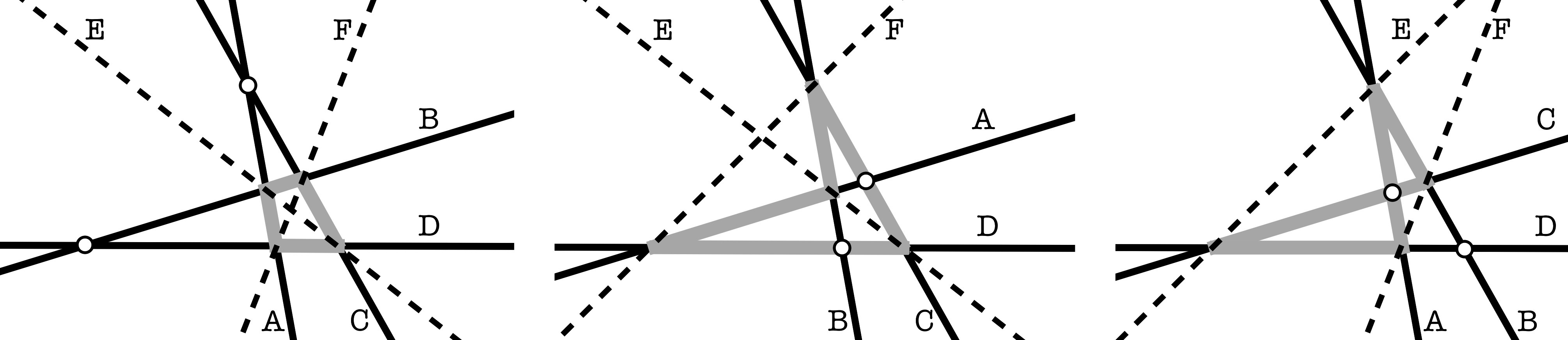} 
 \end{center}
 \caption{Three complete quadrilaterals $ABCD$ and their diagonals $E$ and $F$. The circles mark the pairs $A,C$ and $B,D$ that determine the configuration for each complete quadrilateral. Since none of the lines are parallel, each configuration determines a different quadrilateral (shaded gray) whose sides lie in sequence on $A,B,C,D$.  
 }
\end{figure}


\begin{definition} 
We define the {\it diagonals} $E$ and $F$ for the configuration $\C$ as follows. 
\begin{enumerate}
\item   If $A \ne B$, then 
 $E$ is the line in $\P^2$ through the origin (the intersection of $C$ and $D$) and the intersection of $A$ and $B$, which is possibly at infinity.  Otherwise, if $A = B$, then $E$ is the line through the origin that is orthogonal to $A$, i.e., the line defined by $x+m_Ay=0$.

\item  
If  $A \ne D$, then 
$F$ is the line in $\P^2$ through the intersection  of $A$ and $D$ and that of $B$ and $C$. 
Otherwise, if $A = D$, then $F$ is the line through the intersection of $B$ and $C$ that is orthogonal to $A$. 


 
\end{enumerate}

\end{definition}

 The third diagonal for the configuration can be defined similarly but only a special case of it will be needed, and not until Section 9, so we define it there. 
 Figure 3 exhibits some complete quadrilaterals and their diagonals in the  case in which no two lines are parallel. The case in which some of the  lines are parallel is clarified by the following remark and Figure 4.

\begin{remark} \label{diagonal remark} $\:$

\begin{enumerate}

\item 
If $A \parallel B$ and $A \ne B$, then $E$ is the line through the intersection of $C$ and $D$ that is parallel to $A$ and $B$; see the first configuration in Figure~4.  If $A = B$, then $F$ coincides with $A$ and $B$ as in the second configuration, and $E$ is an altitude for the triangle that is formed from the four lines in $\C$.  

\item If   $A \parallel D$ and $B \parallel C$, then $F$ is the line at infinity; see the third configuration in Figure~4.
Otherwise, if $A$ and $D$ intersect in a single point and $B$ and $C$ are parallel, then $F$ is the line  parallel to $B$ and $C$  through this point.  
Similarly, if $B$ and $C$ intersect in a single point and $A $ and $D$ are parallel and distinct, then $F$ is the line through the point that is parallel to $A$ and $D$.

\end{enumerate} 
\end{remark} 

\begin{figure}[h] \label{diagonalsparallel}
 \begin{center}
 \includegraphics[width=0.85\textwidth,scale=.09]{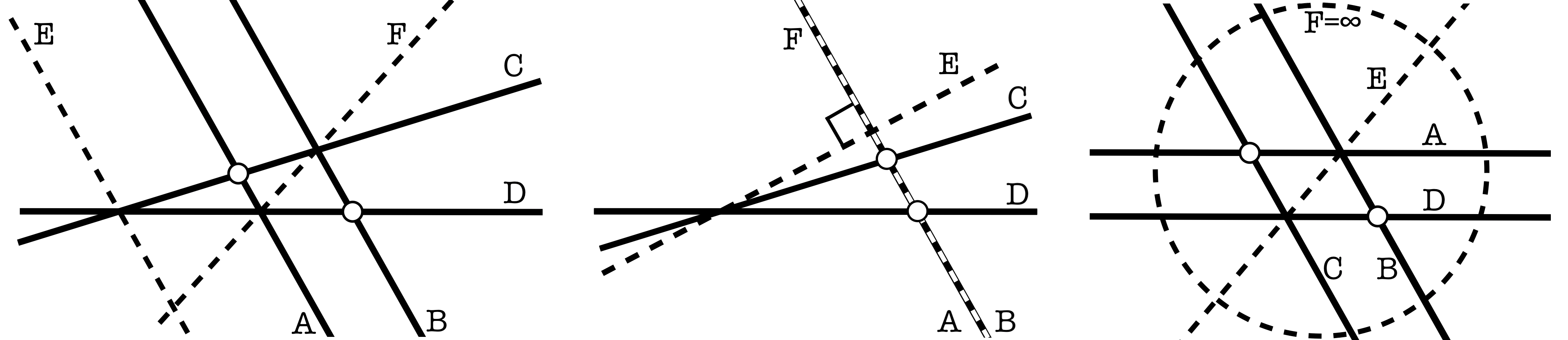} 
 \end{center}
 \caption{The diagonals of three complete quadrilaterals $ABCD$ with at least one set of parallel lines.
 In the second figure, the lines $A$ and $B$ are equal, and so $F$ is this same line also. 
  In the third figure, the diagonal $F$ is the line at infinity since the diagonal passes through the intersection of $A$ and $D$ and the intersection of $B$ and $C$.  
 }
\end{figure}

We end this section with a technical lemma that explains  the significance of the  constants $e_1,e_2,f_1,f_2$ from the standing assumptions.  The purpose of the  lemma  and 
the formulation of the constants $e_1,e_2,f_1,f_2$  
 is to shift what would be tedious case-by-case calculations throughout the paper to a few such calculations here in this proof.   
The lemma refers to the slope of a line, which will have the usual meaning, but which we often interpret as a point on the projective line $\P^1$ so as to have some flexibility with infinite slope. 
Specifically, we view  the {slope} of a line through two distinct points $(x_1,y_1)$ and $(x_2,y_2)$ in $\k^2$ as the point $[y_1-y_2:x_1-x_2] \in \P^1$, and we write this equivalence class as $(y_1-y_2)/(x_1-x_2)$, where possibly $x_1-x_2=0$.  We continue also to represent slopes such as $m_A,m_B,$ etc.,  as an element $m$ of $\k$ but sometimes interpret $m$ as the point $m/1 \in \P^1$.

\begin{lemma}
\label{Delta 0}
$A=B$ if and only if $e_1=e_2=0$. Otherwise, if $A \ne B$, the slope of $E$ is  $e_1/e_2$. 
Similarly, $A = D$ or $F$ is the line at infinity  if and only if  $f_1=f_2=0$. Otherwise, 
the slope of $F$ is $f_1/f_2$.

\end{lemma}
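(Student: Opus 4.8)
The plan is to verify each of the four claims by direct computation, using the explicit line equations from the standing assumptions and handling the two diagonals $E$ and $F$ in parallel. For the diagonal $E$: if $A = B$, then comparing $A:\, y = m_Ax+b_A$ with $B:\, y=m_Bx+1$ forces $m_A = m_B$ and $b_A = 1$, whence $e_1 = b_Am_B - m_A = m_A - m_A = 0$ and $e_2 = b_A - 1 = 0$; conversely $e_2 = 0$ gives $b_A = 1$, and then $e_1 = 0$ gives $m_B = m_A$, so $A = B$. If $A \ne B$, I compute the intersection point of $A$ and $B$ in $\P^2$ and then the slope of the line through that point and the origin. Since $A$ and $B$ may be parallel (in which case the intersection is at infinity), it is cleanest to work projectively throughout: writing $A$ and $B$ in the form $m_Ax - y + b_A w = 0$ and $m_Bx - y + w = 0$ in $\P^2$, their intersection is the cross product $[{-b_A+1} : {b_A m_B - m_A} : {m_B - m_A}] = [-e_2 : e_1 : m_{AB}]$. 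The line $E$ through this point and the origin $[0:0:1]$ then has slope (second coordinate over first coordinate of the affine direction) equal to $e_1/e_2$, which I read off directly; when $A \parallel B$ one checks this still gives the correct common slope, matching Remark~\ref{diagonal remark}.

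For the diagonal $F$: if $A = D$, then $m_A = m_D$ and $b_A = 0$, so $f_1 = b_Am_{BC}m_D + m_{DA}m_C = 0 + 0 = 0$ and $f_2 = m_{DA} + b_Am_{BC} = 0 + 0 = 0$. For the converse and the case $A \ne D$, I again work projectively: $F$ passes through $A \cap D$ and $B \cap C$. Using the projective forms, $A \cap D = [0 : b_A : b_A(m_D - m_A)]$... wait — more carefully, $A \cap D$ is the cross product of $(m_A, -1, b_A)$ and $(m_D, -1, 0)$, which is $[b_A : b_A m_D : m_D - m_A] = [1 : m_D : (m_D-m_A)/b_A]$ when $b_A \ne 0$; and $B \cap C$ is the cross product of $(m_B,-1,1)$ and $(m_C,-1,0)$, namely $[1 : m_C : m_C - m_B]$. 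I then form the line through these two points (another cross product) and extract its slope, which should simplify to $f_1/f_2$. The statement "$A = D$ or $F$ is the line at infinity $\iff f_1 = f_2 = 0$" requires separately checking that $F$ is the line at infinity exactly when both $A \cap D$ and $B \cap C$ are points at infinity, i.e. when $A \parallel D$ and $B \parallel C$; translating the parallelism conditions $m_A = m_D$ and $m_B = m_C$ into the vanishing of $f_1$ and $f_2$ (and checking the converse direction — that $f_1 = f_2 = 0$ with $A \ne D$ forces this parallel configuration) is the bookkeeping core of the lemma.

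The main obstacle I anticipate is not any single hard computation but the case analysis needed to make the converse directions airtight: from $f_1 = f_2 = 0$ one must deduce that either $A = D$ or ($A \parallel D$ and $B \parallel C$), and this requires splitting on whether $b_A = 0$ and whether various slope differences vanish, all while respecting the standing assumptions (notably $C \not\parallel D$, i.e. $m_C \ne m_D$, and $B \ne D$). Concretely, $f_2 = m_{DA} + b_A m_{BC} = 0$ and $f_1 = b_A m_{BC} m_D + m_{DA} m_C = 0$ together give $m_{DA} m_C - m_{DA} m_D = m_{DA} m_{CD} = 0$ (substituting $b_A m_{BC} = -m_{DA}$ from the first into the second), and since $m_{CD} \ne 0$ we get $m_{DA} = 0$, i.e. $A \parallel D$; then $b_A m_{BC} = 0$, so either $b_A = 0$ (giving $A = D$) or $m_{BC} = 0$ (giving $B \parallel C$, the line-at-infinity case). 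This is the kind of clean cancellation the authors presumably engineered the constants $e_i, f_i$ to produce, so once the projective intersection formulas are set up, the rest should fall out; I would organize the write-up as: (1) the $E$ computation, (2) the $F$ computation, (3) the degenerate-equality equivalences, handling parallel subcases by reference to Remark~\ref{diagonal remark}.
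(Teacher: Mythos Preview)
Your proposal is correct and follows essentially the same route as the paper: compute the intersection points $A\cap B$, $A\cap D$, $B\cap C$, read off the slopes of $E$ and $F$, and for the converse direction on $F$ use exactly the substitution $b_A m_{BC}=-m_{DA}$ (from $f_2=0$) into $f_1=0$ to obtain $m_{DA}m_{CD}=0$, whence $m_A=m_D$ and then $b_A=0$ or $m_B=m_C$. The only cosmetic difference is that you package the intersection computations projectively via cross products (which lets the parallel subcases fall out uniformly), whereas the paper computes the intersections affinely and handles the parallel cases $A\parallel B$, $A\parallel D$, $B\parallel C$ one at a time; the content is the same.
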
 

\begin{proof}
If $e_1=e_2=0$, then $b_Am_B -m_A =0$ and $b_A = 1$, so  $m_B = m_A$ and hence  $A =B$.  The converse is clear.
Now suppose that~$A \ne B$.  If $A \parallel B$, then by Remark~\ref{diagonal remark},
 $m_E= m_A = m_B$. Since $A \ne B$, this implies that $b_A \ne 1$.  Thus $[m_E:1] = [m_A(b_A-1):b_A-1],$ 
 so that $E$ has slope $ e_1/e_2$.  Otherwise, if $A$ is not parallel to $B$, then $A$ and $B$ intersect in the point   $$\left(\frac{1-b_A}{m_{AB}}, m_A\left(\frac{1-b_A}{m_{AB}}\right)+b_A\right) = 
\left(\frac{1-b_A}{m_{AB}}, \frac{m_A-m_Bb_A}{m_{AB}}\right).$$
Since $C$ and $D$ intersect at the origin,   the slope of $E$ is $(m_A-m_Bb_A)/(1-b_A)=e_1/e_2$.

We show next that $f_1=f_2=0$ iff $A = D$  or  $A \parallel D$ and  $B \parallel C$.
Suppose that $f_1=f_2=0$. Then  $m_{DA} + b_Am_{BC}=f_2 =0$ yields  
 $$0=f_1= b_Am_{BC} m_D +m_{DA}m_C = -m_{DA}m_D + m_{DA}m_C = m_{DA}m_{CD}.$$  Since $C$ is not parallel to $D$, it must be that 
 $m_A = m_D$. Thus  $ 0 = m_{DA} + b_Am_{BC}  =b_Am_{BC}$. 
 If $b_A \ne 0$, then
 $m_B = m_C$ and $A \parallel D$ and $B \parallel C$;   
otherwise, if $b_A =0$, then $A = D$.    This proves that if $f_1=f_2=0$, then $A=D$  or $F$ is the line at infinity. 
 The converse is straightforward.
 
 It remains to prove the last assertion. 
Suppose that $A \ne D$, $A \parallel D$ and $B$ is not parallel to $C$. Then $b_A \ne 0$ and   $m_B \ne m_C$.   Also, $m_F = m_A = m_D$.  Therefore,  \begin{eqnarray*}
[m_F:1] & = & [m_D:1] \: = \: [b_Am_{BC}m_D:b_Am_{BC}] \\
& = & 
[b_Am_{BC}m_D+m_{DA}m_C:m_{DA}+b_Am_{BC}]\\
& = & [f_1:f_2].  
\end{eqnarray*}
Thus the slope of $F$ is  $f_1/f_2$.  A similar argument shows that if $B \parallel C$ and $A$ is not parallel to $ D$, then the slope of $F$ is again $f_1/f_2$.  

Finally, suppose that $A$ is not parallel to $D$ and $B$ is not parallel to $C$.   The two points   $$
\left(\frac{b_A}{m_{DA}}, \frac{m_Db_A}{m_{DA}}   \right), \: \: 
\left(\frac{1}{m_{CB}},\frac{m_C}{m_{CB}}  \right) 
$$
are the points of intersection of $A$ and $D$ and $B$ and $C$, respectively. The slope  of the line $F$ through these two points is $$\left[ {m_{DA}m_C-m_{CB}m_Db_A}: {m_{DA}-m_{CB}b_A} \right] \: = \: [f_1:f_2],$$
which proves the lemma. 
\end{proof}

\section{The curve of inscribed rectangles}

We define parallelograms and rectangles inscribed in the lines $A,B,C,D$ as 
points in $\P\C$.   
  

\begin{definition} Let  $w \in \k$. A  {\it parallelogram in ${\bf C}_w$} is a
point $(x_A,y_A,\ldots,x_D,y_D)\in \C_w$ such that $x_A-x_B = x_D-x_C$ and $y_A-y_B = y_D -y_C$. The points $(x_A,y_A) ,\cdots,(x_D,y_D)$ are the {\it vertices} of the {parallelogram}. 
A {\it parallelogram in $\P{\bf C}$} is a
point $[x_A:y_A:\cdots:x_D:y_D:w] \in \P\C$ such that $(x_A,y_A,\ldots,x_D,y_D)$ is a parallelogram in $ \C_w$. The parallelogram 
is {\it at infinity} if $w =0$.  
 We allow the possibility that two or more of the vertices 
are the same point, and in this case, we say that the parallelogram is {\it degenerate}. \end{definition} 

Thus the parallelograms inscribed in sequence\footnote{A parallelogram $[x_A:y_A:\ldots:x_D:y_D:w]$ in $\P\C$ has vertices inscribed in sequence in the lines $A_w,B_w,C_w,D_w$.   
If we wish to find parallelograms or rectangles whose vertices fall   in a different sequence, we would define the Cartesian product $\C$ using the lines in a different sequence.  
As discussed in \cite[Section 4]{OW}, finding all rectangles inscribed on four lines requires finding all the rectangles inscribed in  21 configurations involving these four lines. (In some of these configurations, two pairs of lines share a line; these configurations are needed to find the rectangles having two vertices on the same line.) In any case, the important point here is:
 {\it Finding rectangles inscribed in lines reduces  to finding the  rectangles in $\C$.}}
 in the lines $A,B,C,D$ are the parallelograms in $\C_1$. Those inscribed in scaled copies $A_w,B_w,C_w,D_w$ of $A,B,C,D$ are  the parallelograms in $\C_w$.  A parallelogram in $\P\C$ is an equivalence class of the scaled copies of a parallelogram in $\C$ or $\C_0$, the latter being the parallelograms at infinity.   


Rectangles are parallelograms whose vertices are subject to an additional condition:

\begin{definition} 
\label{rectangle def} 
Let  $w \in \k$. A  {\it rectangle} 
is a
parallelogram $(x_A,y_A,\ldots,x_D,y_D)\in \C_w$ such that $(x_C-x_B)(x_{B}-x_A) + (y_{C}-y_B)(y_{B}-y_A)=0.$
A {\it rectangle  in $\P{\bf C}$} is a 
parallelogram 
$[x_A:y_A:\ldots:x_D:y_D:w]$
such that   $(x_A,y_A,\ldots,x_D,y_D)$ is a rectangle in $ \C_w$. The rectangle is {\it at infinity} if $w=0$. See Figure 5 for examples of rectangles at infinity. 
 \end{definition}
 
 \begin{figure}[h] \label{diagonalsparallel}
 \begin{center}
 \includegraphics[width=0.95\textwidth,scale=.09]{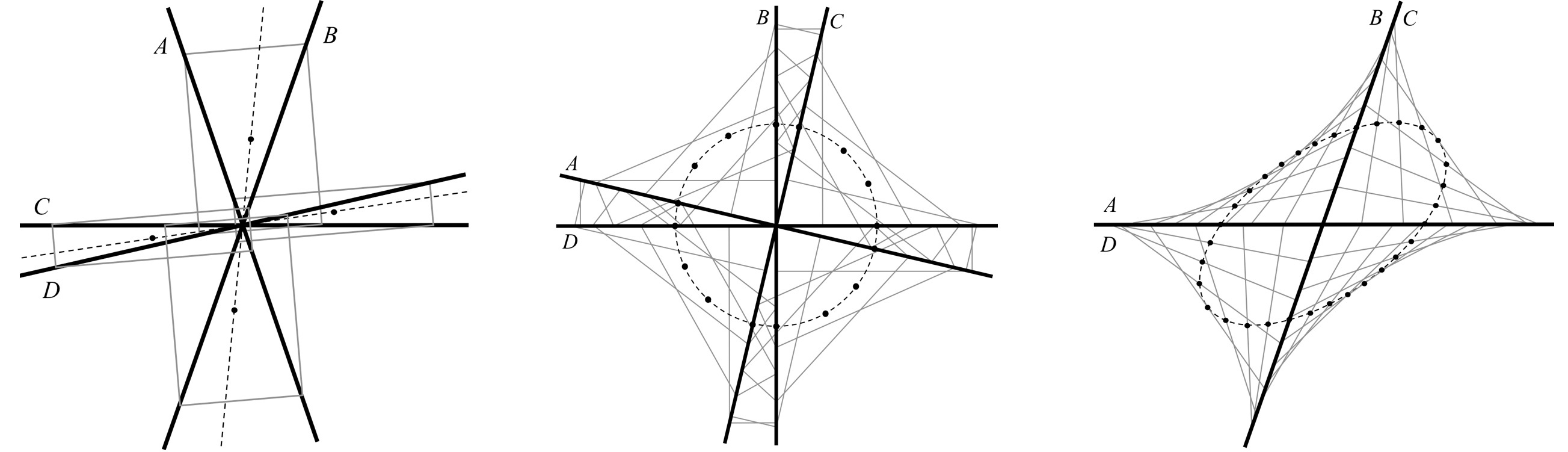} 
 \end{center}
 \caption{Rectangles at infinity. The first figure shows the two rectangles at infinity and their reflections  as represented in $\C_0$ when  none of the lines are parallel. 
 The other two figures represent twin pairs. In these cases, there are rectangles at infinity of every possible slope; see Corollary~\ref{ap}.
In the third figure, each rectangle is degenerate. In fact, these are precisely the degenerate rectangles of unit length inscribed in $\C_0$. The centers of such degenerate rectangles define an ellipse. 
 }
\end{figure}

 If ${\k}$ is the field of real numbers, then 
the  equation  in Definition~\ref{rectangle def} implies that  
  the line passing through the parallelogram's vertices on lines $A$ and $B$ is perpendicular to the line passing through the vertices on $B$ and $C$.  Interpreting this condition as an orthogonality condition for  fields that are not formally real can be problematic: If $\k$ is the field of complex numbers, then the same line  can be ``orthogonal'' to itself under this definition (e.g., the line $y = ix$ has this property). Thus for fields such as the field of  complex numbers, what we are calling  rectangles may not match with other natural notions of rectangles defined using inner products more typical for the choice of such a field.\footnote{For simplicity's sake, in this article we work only in the case in which the inner product is the dot product. In a future paper, when we need to work over an inner product space,  we will view 
  the inner product space as a linear transformation of a vector space equipped with the dot product and in doing so apply the results of the current paper.} But our primary interest is in formally real fields,
 including  ${\mathbb{R}}$ itself, and in these cases, our algebraic rectangles reflect an obvious choice of orthogonality relation.  


\begin{theorem}  \label{rectangle in}
The set of parallelograms in $\P\C$ is a plane, and the set of rectangles  is a  curve of degree $2$   in this plane. The line at infinity for this plane is the set of parallelograms at infinity, while the points at infinity for the curve of rectangles in the plane are the rectangles at infinity. 
\end{theorem}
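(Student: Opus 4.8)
The plan is to work directly with the defining equations in homogeneous coordinates $[x_A:y_A:\cdots:x_D:y_D:w]$ on $\P^8$ and cut out $\P\C$ by the linear conditions it satisfies, then add the parallelogram and rectangle conditions. First I would write down the constraints that define $\P\C$ inside $\P^8$: for each line $L$, the vertex $(x_L,y_L)$ lies on $L_w$, i.e. $y_L = m_L x_L + b_L w$. These are four linear homogeneous equations, so $\P\C$ is a linear subvariety of $\P^8$ of dimension $4$ — consistent with the description in Definition~\ref{first notation} of $\P\C$ as the projectivization of a five-dimensional subspace of $\k^8$. On $\P\C$ we may use $x_A,x_B,x_C,x_D,w$ as homogeneous coordinates (the $y_L$ being linear functions of these), so $\P\C \cong \P^4$.

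Next I would impose the parallelogram conditions $x_A-x_B=x_D-x_C$ and $y_A-y_B=y_D-y_C$. Substituting $y_L=m_Lx_L+b_Lw$, the second condition becomes $m_Ax_A-m_Bx_B-m_Dx_D+m_Cx_C + (b_A-b_B-b_D+b_C)w=0$, which together with the first is two more independent linear homogeneous equations in the five coordinates $x_A,x_B,x_C,x_D,w$ (independence uses that $C$ and $D$ are not parallel, so $m_C\ne m_D$, from the standing assumptions — this is where that hypothesis enters). Hence the parallelograms in $\P\C$ form a linear subvariety of dimension $2$, i.e. a plane $\Pi \cong \P^2$; call homogeneous coordinates on it $[u:v:w]$ after solving for, say, $x_C,x_D$ in terms of $x_A,x_B,w$. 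The parallelograms at infinity are those with $w=0$, which is the vanishing of a single (nonzero) linear form on $\Pi$, hence a line in $\Pi$ — the claimed line at infinity. I should check that the linear form $w$ does not vanish identically on $\Pi$, i.e. that $\Pi$ is not contained in $w=0$; equivalently that there exists an affine parallelogram, which follows from the existence of an affine rectangle under the non-all-parallel / non-concurrent standing hypotheses (or can be exhibited directly).

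Finally I would impose the rectangle condition $(x_C-x_B)(x_B-x_A)+(y_C-y_B)(y_B-y_A)=0$. After substituting $y_L=m_Lx_L+b_Lw$ and then the linear parametrization of $\Pi$ by $[u:v:w]$, this becomes a single homogeneous polynomial equation of degree $2$ in $u,v,w$, so the rectangles form a conic $\mathcal{R}$ in the plane $\Pi$. The one genuine point I must verify here is that this quadratic form is not identically zero — otherwise ``curve of degree $2$'' would be vacuous; this is the step I expect to be the main (minor) obstacle, and it is handled by producing one explicit affine rectangle not satisfying the equation identically (equivalently, noting that the coefficients of the quadratic form are built from the $m_L$ and $b_L$ and are not all zero under the standing assumptions, or by citing that the curve of rectangles is genuinely one-dimensional because inscribed rectangles of varying slope exist, as developed in Sections 5–6). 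Once $\mathcal R$ is a genuine conic in $\Pi\cong\P^2$, its points at infinity are $\mathcal R\cap\{w=0\}$, which by Definition~\ref{rectangle def} are exactly the rectangles at infinity, giving the last assertion. The whole argument is thus: three rounds of linear substitutions to get from $\P^8$ down to the plane $\Pi$, one quadratic substitution to get the conic, plus a non-degeneracy check that the quadratic form is nonzero.
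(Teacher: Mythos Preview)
Your approach is correct and essentially the same as the paper's: impose the line-membership and parallelogram constraints as linear equations to land in a plane parametrized by $x_A,x_B,w$, then observe that the rectangle condition is a single homogeneous quadratic in those coordinates. The paper carries this out more explicitly by solving for $x_C$ (and then $x_D$) in terms of $x_A,x_B,w$ and writing down the resulting polynomial $h(X,X_A,X_B)$ concretely; that explicit formula for $x_C$ is reused verbatim in later proofs (Lemmas~\ref{four eq a} and~\ref{four eq b}), so the computational version has downstream payoff that your dimension-count version does not.

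Two small points. First, your non-degeneracy check is slightly garbled: to show the quadratic form is not identically zero you need to exhibit a \emph{parallelogram that is not a rectangle}, not a rectangle. Second, appealing to Sections~5--6 for this would be circular, since the slope and aspect paths are shown to lie on the degree-$2$ curve established here. The direct route (checking one coefficient of $h$ is nonzero, or producing one non-rectangular parallelogram) is the right one; the paper, incidentally, does not pause to verify this and simply asserts the degree is $2$.
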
 

\begin{proof} To prove that the set of parallelograms in $\P\C$ is a plane, it suffices to show that  
a  point $p=[x_A:y_A:\cdots:x_D:y_D:w] \in \P{\bf C}$ is a parallelogram  if and only if 
\begin{eqnarray} \label{xc}
 x_C =\frac{1}{m_{DC}} \cdot
\left(m_{AD}x_A+m_{DB}x_B+(b_A-1)w\right) {\mbox{ and }}   x_D = x_A-x_B+x_C.
\end{eqnarray}
 Suppose that $p$ is a parallelogram. Using the fact that the vertices of $p$ lie on the lines $A_w,B_w,C_w,D_w$, we have   \begin{center} $x_A-x_B = x_D-x_C$ \: and \: $m_Ax_A-m_Bx_B+(b_A-1)w = m_Dx_D-m_Cx_C.$ 
\end{center}
Rewriting,   
$$\begin{bmatrix} 1 & -1 \\
m_A & -m_B \\
\end{bmatrix}
\begin{bmatrix}
x_A \\ 
x_B\\
\end{bmatrix}
+
\begin{bmatrix} 
0 \\ (b_A-1)w \\
\end{bmatrix} 
= \begin{bmatrix} 1 & -1 \\
m_D & -m_C \\
\end{bmatrix}
\begin{bmatrix}
x_D \\ 
x_C\\
\end{bmatrix}. $$
Now $m_C \ne m_D$ since the lines $C$ and $D$ are not parallel, so \begin{eqnarray*}
\begin{bmatrix}
x_D \\ 
x_C\\
\end{bmatrix} & = & 
\frac{1}{m_{DC}}
\begin{bmatrix}
-m_C & 1 \\
-m_D & 1 \\
\end{bmatrix}
\left(
\begin{bmatrix} 1 & -1 \\
m_A & -m_B \\
\end{bmatrix}
\begin{bmatrix}
x_A \\ 
x_B\\
\end{bmatrix}
+
\begin{bmatrix} 
0 \\ (b_A-1)w \\
\end{bmatrix} 
\right) \\
& = & 
\frac{1}{m_{DC}}
\left(
\begin{bmatrix}
m_{AC} & m_{CB} \\
m_{AD} & m_{DB} \\
\end{bmatrix}
\begin{bmatrix}
x_A \\ 
x_B\\
\end{bmatrix}
+
\begin{bmatrix} 
(b_A-1)w\\ (b_A-1)w \\
\end{bmatrix} 
\right) \\
\end{eqnarray*}
Thus $x_C$ is as claimed. Since $x_A-x_B = x_D -x_C$, we have also that $x_D = x_C+x_A-x_B$. 

Conversely, if $x_A,x_B,x_C,x_D,w \in \k$    satisfy the equations in $(\ref{xc})$, then   the above matrix calculations show $$y_A-y_B=m_Ax_A-m_Bx_B+(b_A-1) w= m_Dx_D-m_Cx_C+w=y_D-y_C,$$ so 
$p$  defines a parallelogram in~$\P{\bf C}$. 
%

Now let  $X_A,X_B,X$ be indeterminates for $\k$. Define a polynomial $f$ by 
\vspace{.05in}
\begin{center}$f(X_A,X_B,X) =  \frac{1}{m_{DC}} \cdot
\left(m_{AD}X_A+m_{DB}X_B+(b_A-1)X\right).$
\end{center}
\vspace{.05in}
Applying the relevant definitions and using $(\ref{xc})$, the rectangles in $\P{\bf C}$ are the parallelograms in $\P\C$ of the form  $[x_A:y_A:\cdots:x_D:y_D:w],$ where $(x_A,x_B,w)$
are the zeroes of the polynomial 
\vspace{.05in}
\begin{center} $
h(X,X_A,X_B) \: = \: 
(m_B X_B- m_AX_A +(1-b_A)X)(m_Cf(X_A,X_B,X)-m_BX_B-b_{B}X)$ \\
$ \ \ \ \ + \: (X_B-X_A)(f(X_A,X_B,X)-X_B).$
\end{center}
It follows that the set of rectangles in $\P\C$ is a curve of degree $2$ in the plane of parallelograms in $\P\C$.  
The points at infinity for this plane are the points in $\P\C$ of the form $[x_A:y_A:\cdots:x_D:y_D:~0]$, where $(x_A,y_A,\ldots,x_D,y_D) $ is a parallelogram in $\C_0$. Thus the points at infinity in the plane of parallelograms are the {parallelograms at infinity}, and similarly the rectangles at infinity are the points at infinity for the curve of rectangles. 
%
%
 %
 %
\end{proof}

As we see in the next lemma, outside of one special case there are at most two rectangles at infinity.  
 We discuss rectangles at infinity in more detail in the next section. For now, we need them in the proof of Theorem~\ref{planar} in  order to better identify the nature of the  curve of rectangles in $\C$. In order to  formulate Lemma~\ref{at infinity},  we give names in Definition~\ref{twin} to two 
types of configuration of the lines $A,B,C,D$ that appear in the lemma and recur often in this article as  exceptional cases. 
We say that  a line $L_1$ with slope $s_1/t_1$ is  {\it orthogonal} to a line $L_2$ with slope $s_2/t_2$ (written $L_1 \perp L_2$) if $s_1s_2 + t_1t_2 =0$.
 We adopt the convention that the line at infinity for $\k^2$ is orthogonal to every line in $\k^2$. 

 \begin{definition} \label{twin}  $\:$
 \begin{enumerate}  
 \item $\C$ has {\it  twin pairs} if $A \parallel D$ and $B \parallel C$ or $A \perp C$ and $B \perp D$. 
 \item $\C$ has {\it dual pairs} if  $A \perp A$ and either $A \parallel B \parallel C$  or $A \parallel B \parallel D$.
 \end{enumerate}
 \end{definition}
 
 If $\C$ has dual pairs, then $m_A^2 =-1$, and so the condition of dual pairs cannot occur if  $\k$ is formally real. 
 In the case $\k=\R$,  
 an elliptic cone with apex in the $xy$-plane can be associated to each of the pairs $A,C$ and $B,D$  as in \cite{OW}. The configuration $\C$ has twin pairs if and only if these cones are identical, unrotated copies of each other, the only difference being that their apexes are in different locations. See Proposition 4.3 of \cite{OW} and its proof.

\begin{lemma} \label{at infinity} Every parallelogram at infinity in $\P\C$ is a rectangle at infinity
if and only if  $\C$ has twin pairs or dual pairs.
If $\C$ does not have twin pairs or dual pairs, then 
there are at most two rectangles at infinity in $\P\C$, and 
if $\k=\R$,  there are exactly two. 
\end{lemma}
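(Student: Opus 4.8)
The plan is to reduce everything to a single binary quadratic form. By the proof of Theorem~\ref{rectangle in}, the parallelograms at infinity in $\P\C$ are parametrized bijectively by $[x_A:x_B]\in\P^1$ through the formulas $(\ref{xc})$ with $w=0$ (together with $y_L=m_Lx_L$), and among these the rectangles at infinity are precisely those with $q(x_A,x_B)=0$, where $q(X_A,X_B):=h(0,X_A,X_B)$ for the polynomial $h$ of that proof. The one genuine computation I would carry out is to set $X=0$ in $h$ and simplify (the useful reductions being $m_{DB}-m_{DC}=-m_{BC}$ and $m_C m_{DB}-m_{DC}\,m_B=-m_D m_{BC}$), obtaining
\[
m_{DC}\,q \;=\; -m_{AD}(1+m_Am_C)X_A^2+\big(m_{AD}(1+m_Bm_C)+m_{BC}(1+m_Am_D)\big)X_AX_B-m_{BC}(1+m_Bm_D)X_B^2 .
\]
Write $\alpha,\beta,\gamma$ for these three coefficients.

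Since a binary quadratic form that vanishes at $[1:0]$, $[0:1]$ and $[1:1]$ is identically zero, ``every parallelogram at infinity is a rectangle at infinity'' is equivalent to $\alpha=\beta=\gamma=0$. From $\alpha=0$ one has $m_{AD}=0$ or $1+m_Am_C=0$, and from $\gamma=0$ one has $m_{BC}=0$ or $1+m_Bm_D=0$; running through the four combinations, imposing $\beta=0$, and discarding a degenerate branch using that $C$ and $D$ are not parallel, each possibility collapses to one of: $m_A=m_D$ and $m_B=m_C$; or $m_Am_C=-1$ and $m_Bm_D=-1$; or $m_A^2=-1$ with three of the four slopes equal. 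These are exactly the twin-pair (first two) and dual-pair configurations. Conversely a short check — for twin pairs of the second kind one uses the identity $(1+m_Bm_C)(1+m_Am_D)-(1+m_Am_C)(1+m_Bm_D)=m_{AB}m_{DC}$ to see $\beta=0$ — shows that twin or dual pairs force $\alpha=\beta=\gamma=0$. This gives the first assertion.

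If $\C$ has neither twin nor dual pairs then $q\not\equiv0$, and a nonzero binary quadratic form has at most two zeros in $\P^1(\k)$, so there are at most two rectangles at infinity. To get ``exactly two'' when $\k=\R$, I would show $\beta^2-4\alpha\gamma>0$. Writing $m_L=\tan\theta_L$ with $\theta_L\in(-\pi/2,\pi/2)$ (the lines are not vertical), $\theta_{LM}:=\theta_L-\theta_M$, and using $m_L-m_M=\sin\theta_{LM}/(\cos\theta_L\cos\theta_M)$ and $1+m_Lm_M=\cos\theta_{LM}/(\cos\theta_L\cos\theta_M)$, one computes $\beta=\sin\sigma/(\cos\theta_A\cos\theta_B\cos\theta_C\cos\theta_D)$ and $\alpha\gamma=\sin\theta_{AD}\sin\theta_{BC}\cos\theta_{AC}\cos\theta_{BD}/(\cos\theta_A\cos\theta_B\cos\theta_C\cos\theta_D)^2$, where $\sigma:=\theta_A+\theta_B-\theta_C-\theta_D$. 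Introducing $\delta:=\theta_{AC}-\theta_{BD}$ and $\epsilon:=\theta_{AD}-\theta_{BC}$ and applying product-to-sum identities rewrites the numerator of $\beta^2-4\alpha\gamma$ as a sum of two squares, namely $4\big(\cos\tfrac{\epsilon}{2}\sin\tfrac{\sigma}{2}\sin\tfrac{\delta}{2}\big)^2+4\big(\sin\tfrac{\epsilon}{2}\cos\tfrac{\sigma}{2}\cos\tfrac{\delta}{2}\big)^2$, so $\beta^2-4\alpha\gamma\ge0$ always. Equality forces both squared terms to vanish, and a finite case check on the resulting equations modulo $\pi$ — using $\theta_C\ne\theta_D$ and that $\theta_{AD},\theta_{BC},\theta_{AC},\theta_{BD}$ all lie in $(-\pi,\pi)$ — shows this happens only when $m_A=m_D$ and $m_B=m_C$, or when $m_Am_C=-1$ and $m_Bm_D=-1$; that is, only for twin pairs. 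Hence when $\k=\R$ and $\C$ is not a twin pair (equivalently not twin or dual, since dual pairs need $m_A^2=-1$), we get $\beta^2-4\alpha\gamma>0$, so $q$ has two distinct roots in $\P^1(\R)$ and there are exactly two rectangles at infinity.

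The main obstacle is this last step: establishing $\beta^2-4\alpha\gamma\ge0$ over $\R$ and pinning down the equality case. The sign of $\beta^2-4\alpha\gamma$ is not visible from its polynomial expression in the $m_L$ (e.g.\ the shape $(m_{AD}(1+m_Bm_C)-m_{BC}(1+m_Am_D))^2+4m_{AD}m_{BC}m_{AB}m_{DC}$ looks indefinite); it becomes transparent only after substituting $m_L=\tan\theta_L$ and using product-to-sum formulas to exhibit it as a sum of squares. The equality analysis then needs care, because $\sigma,\delta,\epsilon$ are not independent and it is precisely their admissible ranges — forced by $C$ and $D$ not being parallel and by the lines being non-vertical — that rule out every possibility except the twin-pair configurations.
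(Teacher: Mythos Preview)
Your proof is correct, and the reduction to the binary form $q=h(0,X_A,X_B)$ together with the case analysis for $\alpha=\beta=\gamma=0$ is essentially the paper's argument. (The paper writes the middle coefficient in the equivalent form $\delta=m_{BD}(m_Am_C+1)+m_{AC}(m_Bm_D+1)$, which makes your Case~4 immediate without the auxiliary identity.)

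Where you genuinely diverge is the discriminant step over $\R$: you pass through the substitution $m_L=\tan\theta_L$ and product-to-sum identities to exhibit $\beta^2-4\alpha\gamma$ as a sum of two trigonometric squares, then run a case check on the angular variables for the equality analysis. The paper instead produces a purely algebraic decomposition,
\[
\beta^2-4\alpha\gamma \;=\; 4(m_Am_C-m_Bm_D)^2 \;+\; \big(m_Am_B\, m_{CD}+m_Cm_D\, m_{AB}-m_{AB}-m_{CD}\big)^2,
\]
so your claim that the sign ``is not visible from its polynomial expression in the $m_L$'' and ``becomes transparent only after substituting $m_L=\tan\theta_L$'' is not accurate. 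The algebraic form also streamlines the equality case: $m_Am_C=m_Bm_D$ lets one eliminate a slope, substitute into the second square, and factor to obtain the twin-pair conditions directly. Your trigonometric route is valid and has its own elegance, but it trades one computation for another (the range constraints on $\sigma,\delta,\epsilon$ and the finite case check modulo $\pi$), whereas the paper's sum-of-squares stays in the polynomial setting throughout and avoids that final modular analysis.
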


\begin{proof} With polynomials $f$ and $h$ as in the proof of Theorem~\ref{rectangle in},
 the rectangles at infinity are the parallelograms $[x_A:y_A:\cdots:x_D:y_D:0]$ in $\P\C$ for which   $h(x_A,x_B,0)=0$. Every parallelogram at infinity in $\P\C$ is a rectangle at infinity  if and only if $h(X_A,X_B,0)$ is identically $0$. 
 To prove the first assertion of the lemma, we show that $h(X_A,X_B,0)=0$ if and only if $\C$ has twin pairs or dual pairs.

   Expanding $h(X_A,X_B,0)$, a calculation shows  $$m_{CD}h(X_A,X_B,0) = m_{AD}(m_Am_C+1)X_A^2 -\delta X_AX_B+  m_{BC} (m_Bm_D+1)X_B^2,$$ where
   $\delta= 
   m_{BD}(m_Am_C+1)+m_{AC}(m_Bm_D+1).$
    Thus $h(X_A,X_B,0)=0$ if and only if
    \begin{itemize}
    \item[(i)] $m_A = m_D$ or $m_Am_C=-1$; 
       \item[(ii)] $ m_B=m_C$ or  $m_Bm_D=-1$; and 
    \item[(iii)] $m_{BD}(m_Am_C+1)=-m_{AC}(m_Bm_D+1)$. 
    \end{itemize}
    If $\C$ has twin pairs or dual pairs, then (i), (ii) and (iii) clearly hold. Conversely, if 
     both pairs $A,C$ and $B,D$ are orthogonal, then $\C$ has twin pairs, so 
    suppose  that     at least one of the pairs  is not orthogonal, say $A$ and $C$ are not orthogonal. By (i),  
  $m_A = m_D$. If $m_B = m_C$, then $\C$ has twin pairs, so suppose that $m_B \ne m_C$.  By (ii), $m_Bm_D =-1$, and by (iii), $m_B=m_D$.  Thus $\C$ has dual pairs. 
 On the other hand, if $B$ and $D$ are not orthogonal, then $m_B=m_C$.  If $m_A = m_D$, then $\C$ has twin pairs. Otherwise, if $m_A \ne m_D$, then  by (i), $m_Am_C=-1$, and so by (iii), $m_A=m_C$, so that  $\C$ has dual pairs.  This proves the first assertion of the lemma. 
 


   Now suppose that   $\C$ does not have twin pairs or dual pairs.  
   Then $h(X_A,X_B,0)$ is not identically zero, so 
  this polynomial is homogeneous of degree~$2$. Therefore, there are most two  zeroes of $h(X_A,X_B,0)$ in $\P^1$. Suppose that $\k = \R$. We show that are exactly two zeroes in $\P^1$ for this polynomial.
    A calculation shows the discriminant of $h(X_A,X_B,0)$ is $$\Delta=4(m_Am_C-m_Bm_D)^2+(m_Am_Bm_{CD}+m_Cm_Dm_{AB}-m_{AB}-m_{CD})^2.$$
    We claim that if    $\Delta =0$, then $\C$ has twin pairs. Suppose $\Delta=0$. Then $m_Am_C=m_Bm_D$, and since all four lines $A,B,C,D$ are not parallel, at least one of these slopes is nonzero, say $m_A \ne 0$.  Thus $m_C = \frac{m_Bm_D}{m_A}$, and so  from $m_Am_Bm_{CD}+m_Cm_Dm_{AB}-m_{AB}-m_{CD}=0,$ we obtain 
   $ (m_Bm_D+1)(m_A-m_D)(m_A-m_B)=0.$
   This along with $m_Am_C=m_Bm_D$ and $m_C \ne m_D$ implies that $\C$ has twin pairs, a contradiction. 
   Similarly, if $m_C \ne 0$, then another calculation shows
   that $ (m_C-m_D)(m_Bm_D+1)(m_B-m_C) =0$, and again $\C$ has twin pairs. Symmetric arguments involving $m_B$ and $m_D$ instead of $m_A$ and $m_C$ also yield that $\C$ has twin pairs. 
 Therefore, if $\C$ does not have twin pairs, then $\Delta>0$,   
and $h(X_A,X_B,0)$ has two zeroes in $\P^1$.  Consequently, there are two rectangles at infinity for $\C$.  
\end{proof}

If  $\k=\R$, then 
 the set of rectangles in $\P\C$ is a conic by  Theorem~\ref{rectangle in}. 
The nature of this  conic when restricted to $\C$ is clarified by the next theorem.

\begin{theorem} \label{planar} If $\k = \R$, then the set of rectangles in $\C$ is a line if and only if $\C$ has twin pairs. Otherwise, the set of rectangles in $\C$ is 
 a planar hyperbola. 

\end{theorem}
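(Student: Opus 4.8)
The plan is to leverage Theorem~\ref{rectangle in} together with Lemma~\ref{at infinity}. By Theorem~\ref{rectangle in}, the rectangles in $\P\C$ form a degree-$2$ curve $\mathcal{R}$ in the plane $\Pi$ of parallelograms, and the rectangles at infinity are exactly the points of $\mathcal{R}$ on the line at infinity $\ell_\infty = \Pi \cap \{w=0\}$. So the set of rectangles in the affine piece $\C$ is $\mathcal{R} \setminus \ell_\infty$, and its projective closure in $\Pi$ is $\mathcal{R}$ (or a component of it). The strategy is to read off the affine geometry of $\mathcal{R}\setminus \ell_\infty$ from how $\mathcal{R}$ meets $\ell_\infty$, i.e., from the polynomial $h(X_A,X_B,0)$ analyzed in Lemma~\ref{at infinity}.

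First I would dispose of the twin-pairs case. If $\C$ has twin pairs, then by Lemma~\ref{at infinity} every parallelogram at infinity is a rectangle at infinity, so $\ell_\infty \subseteq \mathcal{R}$; since $\mathcal{R}$ has degree $2$ and $\ell_\infty$ is a line contained in it, $\mathcal{R} = \ell_\infty \cup \ell'$ for some line $\ell'$ in $\Pi$ (possibly $\ell' = \ell_\infty$, but I would argue $\ell' \ne \ell_\infty$ by exhibiting at least one honest inscribed rectangle in $\C$ using the non-degeneracy of the standing assumptions, or by noting $\mathcal{R}\setminus\ell_\infty$ is nonempty). Then the set of rectangles in $\C$ is $\ell' \setminus \ell_\infty$, an affine line. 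For the converse within this case: if the set of rectangles in $\C$ is a line, its closure is a line $\ell' \subseteq \mathcal{R}$; a degree-$2$ plane curve containing a line splits as a union of two lines, so $\mathcal{R} = \ell' \cup \ell''$. I would then show $\ell'' $ must be $\ell_\infty$ — equivalently that $\mathcal{R}\cap \ell_\infty$ is all of $\ell_\infty$ — which by Lemma~\ref{at infinity} forces twin or dual pairs, and dual pairs is excluded over $\R$; alternatively, one shows directly that $\mathcal{R}$ having a line component forces $h(X_A,X_B,0)\equiv 0$.

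Next, the generic case: assume $\C$ has neither twin nor dual pairs (the latter being automatic over $\R$). By Lemma~\ref{at infinity}, $h(X_A,X_B,0)$ is a nonzero homogeneous quadratic with discriminant $\Delta>0$, hence has two \emph{distinct} real zeros in $\P^1$, giving two distinct rectangles at infinity. So $\mathcal{R}$ meets $\ell_\infty$ in two distinct points. This already shows $\mathcal{R}$ is not a double line and not a line-plus-$\ell_\infty$; to conclude $\mathcal{R}$ is an irreducible (smooth) conic I would rule out the remaining degenerate possibility, namely $\mathcal{R}$ being a union of two distinct affine lines each meeting $\ell_\infty$ in one of those two points — i.e., a pair of lines crossing at an affine point. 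This is the genuine obstacle and the heart of the argument. I expect to handle it either (a) by showing such a crossing point would have to be a rectangle that is simultaneously "two rectangles," contradicting uniqueness of the rectangle of a given slope, or more cleanly (b) by a direct algebraic check: degenerate conics are detected by the vanishing of the $3\times 3$ determinant of the quadratic form $h$, and I would show this determinant is a positive (or manifestly nonzero over $\R$, sum-of-squares-type) expression in the $m_L$ under the no-twin-pairs hypothesis, paralleling the $\Delta$ computation in Lemma~\ref{at infinity}. Once $\mathcal{R}$ is known to be a smooth conic meeting $\ell_\infty$ in two distinct real points, it is a hyperbola, and its affine trace — the set of rectangles in $\C$ — is a planar hyperbola (planar being immediate since $\mathcal{R}$ lies in the plane $\Pi$). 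The main work, then, is the nondegeneracy determinant computation; everything else is bookkeeping with Theorem~\ref{rectangle in} and Lemma~\ref{at infinity}.
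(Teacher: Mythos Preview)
Your overall framework---read the affine picture off of how the degree-$2$ curve $\mathcal{R}\subset\Pi$ meets $\ell_\infty$ via Theorem~\ref{rectangle in} and Lemma~\ref{at infinity}---is exactly the paper's approach, and your handling of the twin-pairs direction and its converse matches the paper's proof almost line for line (the paper exhibits the degenerate rectangle at $p_{AB}$ and $p_{CD}$ to get a point off $\ell_\infty$, and for the converse argues just as you do that the second component must be $\ell_\infty$).

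The genuine problem is in your ``generic'' case. You propose to show that when $\C$ has no twin pairs, $\mathcal{R}$ is an \emph{irreducible} (smooth) conic, via either a uniqueness argument or by showing the $3\times 3$ determinant of $h$ is nonzero. But this is false, and the theorem does not assert it. There are configurations over $\R$ with no twin pairs for which $\mathcal{R}$ \emph{is} a pair of distinct lines meeting at an affine point: these are precisely the degenerate configurations of Theorem~\ref{deg char} (those with $e_1f_1+e_2f_2=0$, equivalently $E\perp F$), and twin pairs is a strictly smaller condition. In that situation the $3\times 3$ determinant of $h$ vanishes, so your proposed computation (b) cannot yield a sum-of-squares positivity statement under the no-twin-pairs hypothesis alone, and approach (a) fails for the same reason. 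The paper's statement uses ``planar hyperbola'' in the broad sense that includes the degenerate case of two crossing lines (see the remark following Theorem~\ref{deg char}); accordingly, the paper's proof of the ``otherwise'' clause is a single sentence: two distinct points at infinity imply the affine conic is a hyperbola, with no attempt to rule out degeneracy. Drop the irreducibility step entirely and you have the paper's proof.
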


\begin{proof} 
By Theorem~\ref{rectangle in}, the set of rectangles in $\P\C$ is a conic in the plane of parallelograms in $\P\C$.  
Suppose first that $\C$ has twin pairs. By Lemma~\ref{at infinity}, the line at infinity for the plane of parallelograms in $\P\C$ consists of rectangles at infinity. 
Since $\C$ has twin pairs, we have either that $A \parallel D$ and $B \parallel C$ or
$A \perp C$ and $B \perp D$. In either case,  if $A \parallel B$, then 
 $C$ is parallel to $D$, which is a  contradiction to our standing assumptions.
Therefore, $A$ and $B$ are not parallel.   
Let $(x_1,y_1)$ be  
the intersection of $A$ and $B$, and recall that $(0,0)$ is the intersection of $C$ and $D$. The point $[x_1:y_1:x_1:y_1:0:0:0:0:1]$ defines a degenerate rectangle in $\P\C$ that is not on the line at infinity for the plane of parallelograms. Since the set of rectangles in $\P\C$ is a conic and contains the line at infinity and a point not on this line, it follows that the set of rectangles in $\P\C$ is a degenerate conic that is a pair of distinct lines, one of which is at infinity. Thus the set of rectangles in $\C$ is a line.  

Conversely, if the set of rectangles in $\C$ is a line $L$, then the planar conic that is the set of rectangles in $\P\C$ is degenerate. 
Since by Lemma~\ref{at infinity} there are at least two rectangles at infinity for 
the conic consisting of all the rectangles in $\P\C$, there is a point at infinity on the conic that is not a point at infinity for $L$. Since the conic is degenerate, the point must lie on a  line $L'$ in $\P\C$ that does not contain $L$. 
Since the set of rectangles in $\C$ is $L$, it follows that $L'$ is the line at infinity 
 in the plane of parallelograms. Thus every parallelogram at infinity is a rectangle at infinity. By Lemma~\ref{at infinity}, $\C$ has twin pairs. 

Finally, if the set of rectangles at infinity is not the line at infinity, then by Lemma~\ref{at infinity} there are two points at infinity for the planar conic consisting of the rectangles in $\P\C$.  This conic is therefore  a planar hyperbola in $\C$. 
\end{proof}

Theorem~\ref{planar} implies that if $\k = \R$ and $\C$ has twin pairs,  then the set of rectangles in $\C$ is a line. The proof shows that this line can be viewed as part of a degenerate hyperbola, with the other line occurring at infinity in $\P\C$. 
See Section 9 for a discussion of how the theorem can be used to recover results from \cite{OW} and \cite{Sch} regarding the locus of centers of the rectangles in $\C$.

\section{Rectangles at infinity}

We defined a rectangle in $\P\C$
as an equivalence class   $[x_A:y_A:\cdots:x_D:y_D:w]$   consisting of the uniformly scaled copies of a rectangle $(x_A,y_A,\ldots,x_D,y_D) \in \C_w$.
 Slope and aspect ratio, to be defined below, are invariant for the  rectangles in the equivalence class. In this section we determine the slopes and aspect ratios of the rectangles at infinity. 
 

We   treat slope and aspect ratio of   rectangles in the same way we treated slope in Section~2, namely as points $[a:b]$ in $\P^1$ that we write as $a/b$. 
The following definition is motivated by the idea that the slope of a rectangle  in ${\bf C}$ is the slope of the line through the vertices of the rectangle that lie on $A$ and $B$. However, since we permit degenerate rectangles, these two vertices may coincide, and so we define the slope in this case to be the slope of a line that is orthogonal to the line through the vertices that lie on $B$ and $C$. This last complication is the reason why two equations rather than one are used in the following definition.

  \begin{definition} \label{slope def} 
  A   rectangle  
  $[x_A:y_A:\cdots:x_D:y_D:w] \in \P\C$ 
   has {\it slope} $s/t \in {\mathbb{P}}^1$ if $s$ and $t$ are not both $0$ and are a solution to the system of equations 
\begin{eqnarray} \label{slope eqs}
\begin{cases} 
(x_B-x_A)S - (y_B-y_A)T = 0 \\ 
(y_C-y_B)S+(x_C-x_B)T = 0 \\
\end{cases} 
\end{eqnarray}
\end{definition}

%
The slope of the rectangle is whichever of  $(y_A-y_B)/(x_A-x_B)$ and $(x_B-x_C)/(y_C-y_B)$ is defined. When both are defined, these two points are equal since the line through $(x_A,y_A)$ and $(x_B,y_B)$ is orthogonal to the line through $(x_B,y_B)$ and $(x_C,y_C)$.

We will show that the slopes of the rectangles at infinity are determined by the following polynomial.

\begin{notation} \label{basic poly 2} 
Define a polynomial $\sigma(S,T) \in \k[S,T]$  by  \begin{eqnarray*}
\sigma(S,T) & = &  (m_Am_C-m_Bm_D)S^2 - \beta ST- (m_Am_C-m_Bm_D)T^2.
\end{eqnarray*} 
where $\beta = 
(m_Am_C+1)(m_B+m_D)-(m_Bm_D+1)(m_A+m_C) =
(m_Bm_C-1)(m_A-m_D)+(m_Am_D-1)(m_C-m_B)$

\end{notation}

 Using the arguments such as in  Lemma~\ref{at infinity}, it is not hard to see that  $\sigma$ is identically zero if and only if  $\C$ has twin pairs.

\begin{remark} \label{slope remark} Direct calculation shows that \begin{eqnarray*}  
(S^2+T^2) \cdot \sigma(S,T)  &  = &  (T+m_AS)(S-m_BT)(T + m_C S)(S-m_DT) \\
& & \ \ \ \ \ \ \ \ \ 
- \ \ \ 
 (S-m_AT)(T+m_BS)(S-m_CT)(T+m_DS).
\end{eqnarray*}
\end{remark}


%



\begin{lemma}   \label{four eq a} 
Let $s,t\in\k$, where not both $s,t$ are zero. There  is a $2\times 1$ vector $U$   and a $2\times 2$ matrix $M$
with
 entries in $\k$ and determinant $\sigma(s,t)$  such that for each $w \in \k$,  a parallelogram 
 $[x_A:y_A:\cdots: x_D:y_D: w] \in \P\C$
  is a rectangle of slope $s/t$   if and only if 
 $$M  \begin{bmatrix}
 x_A \\ x_B
 \end{bmatrix} = wU.$$  %
 %

\end{lemma}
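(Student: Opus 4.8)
The plan is to reduce the pair of slope equations $(\ref{slope eqs})$ to a $\k$-linear system in the coordinates $x_A,x_B,w$ that determine the parallelogram, using the parametrization established in the proof of Theorem~\ref{rectangle in}. Recall from that proof that a parallelogram $[x_A:y_A:\cdots:x_D:y_D:w]\in\P\C$ is completely determined by the triple $(x_A,x_B,w)$: one has $y_A=m_Ax_A+b_Aw$, $y_B=m_Bx_B+w$, $x_C$ is given by $(\ref{xc})$, and $y_C=m_Cx_C$. Before rewriting the equations I would record the key observation that, for a parallelogram, the system $(\ref{slope eqs})$ with $(s,t)\ne(0,0)$ already forces the rectangle condition of Definition~\ref{rectangle def}: if $s\ne0$ one eliminates $s$ between the two equations, and if $s=0$ one reads off $y_B-y_A=0$ and $x_C-x_B=0$, so that in either case $(x_C-x_B)(x_B-x_A)+(y_C-y_B)(y_B-y_A)=0$. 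Hence, for the fixed $s,t$, a parallelogram in $\P\C$ is a rectangle of slope $s/t$ precisely when it satisfies $(\ref{slope eqs})$, and the whole task becomes rewriting that system.

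Next I would substitute the expressions for $y_A,y_B,x_C,y_C$ into the two equations $(\ref{slope eqs})$ and clear the denominator $m_{DC}$, which is nonzero since $C$ and $D$ are not parallel. Both resulting equations are homogeneous and $\k$-linear in $x_A,x_B,w$. Collecting the coefficients of $x_A$ and $x_B$ into the rows of a matrix and the negatives of the $w$-coefficients into a column $U$ puts the system in the desired form $M\begin{bmatrix}x_A\\x_B\end{bmatrix}=wU$; after orienting the first equation suitably one may take $M=\begin{bmatrix}s-m_At & m_Bt-s\\ m_{AD}(m_Cs+t) & m_{CB}(m_Ds+t)\end{bmatrix}$ with a correspondingly explicit $U$, both having entries in $\k$ that depend only on $s,t$ and the fixed line data. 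In particular the same $M$ and $U$ serve every $w\in\k$ at once, so combining this with the previous paragraph yields the stated equivalence.

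It remains to evaluate $\det M$. This is a direct expansion: the second row factors through $m_Cs+t$ and $m_Ds+t$, and matching the coefficients of $s^2$, $st$, $t^2$ — using identities such as $m_{AD}+m_{DB}=m_{AB}$ and recognizing the combination that defines $\beta$ in Notation~\ref{basic poly 2} — gives $\det M=\sigma(s,t)$. The one genuine obstacle is this computation: it must be done carefully, and one should in particular check that clearing the factor $m_{DC}$ leaves no spurious $m_{DC}$ in the resulting binary quadratic form — the $s^2$-coefficient simplifies cleanly to $m_Am_C-m_Bm_D$, the $t^2$-coefficient to its negative, and the $st$-coefficient to $-\beta$, so the quadratic form is exactly $\sigma(s,t)$. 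Everything else is routine bookkeeping once the parametrization has been substituted in.
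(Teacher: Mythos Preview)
Your proposal is correct and follows essentially the same route as the paper: substitute the parametrization from Theorem~\ref{rectangle in} into the slope equations, clear $m_{DC}$ in the equation involving $x_C$, arrive at the same explicit matrix $M$, and verify $\det M=\sigma(s,t)$ by expansion. The only organizational difference is that you establish ``slope equations $\Rightarrow$ rectangle'' up front (by dividing through by $s$ or handling $s=0$ separately), whereas the paper defers this to the converse direction at the end; both are the same argument.
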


\begin{proof}  %
Let $s,t \in \k$ with both $s$ and $t$ not zero, and let $p=[x_A:y_A:\cdots: x_D:y_D: w]$ be a parallelogram in  $\P\C$. 
As in the proof of Theorem~\ref{rectangle in}, 
 $$x_C = \frac{1}{m_{DC}} (m_{AD}x_A+ 
m_{DB}x_B+(b_A-1)w), \:\: x_D = x_A+x_B-x_C.
$$  Since  $p \in \C_w$, 
we have $y_L = m_Lx_L+b_Lw$ for all $L \in\{A,B,C,D\}$.  Let 
$$M=\begin{bmatrix}
s-m_At & m_Bt-s \\
 m_{AD}(m_Cs+t) & m_{CB}(m_Ds+t)\\
 \end{bmatrix}, \:\:
 U=
 \begin{bmatrix} (b_A-1)t \\
 (m_Ds+t)-b_A(m_Cs+t)\\
 \end{bmatrix}.$$
That the determinant of $M$ is $\sigma(s,t)$ 
 is verified by direct calculation.

Unpacking the relevant definitions, the above substitutions for $x_C,y_A,y_B$ and $y_C$ show that
 the parallelogram 
 $(x_A,y_A,\ldots, x_D,y_D)$  and the elements $s,t$
satisfy the slope equations in Definition~\ref{slope def},
$$\begin{cases} 
(x_{A}-x_{B})s = (m_Ax_A-m_Bx_B+(b_A-1){w})t \\
(x_B-  x_C)
t=(m_Cx_C- m_Bx_B-{w})s,\\
\end{cases}$$
 if and only if $M\begin{bmatrix}
 x_A \\ x_B
 \end{bmatrix} =wU.$ Thus if $p$  is a rectangle of slope $s/t$, this rectangle satisfies the matrix equation in the lemma. 
 
 Conversely, if the matrix equation holds for $x_A,x_B$ and $w$, then it suffices to show that the parallelogram $p$ is a rectangle. 
Indeed, as above, the matrix equation can be rewritten as 
 $(x_{A}-x_{B})s = (y_{A}-y_{B})t$ and  $(x_{B}-x_C)t=(y_{C}-y_B)s.$
 Either $s\ne 0$ or $t \ne 0$. Suppose $s \ne 0$.  
Now 
 $s(x_A-x_{B})(y_{B}-y_{C}) = t(y_{A}-y_B)(y_{B}-y_C) = s(y_{A}-y_B)(y_{C}-y_B),$ so that since $s \ne 0$, we have $(x_{A}-x_{B})(y_{B}-y_{C})=(y_{A}-y_{B})(y_{C}-y_{B}).$
Similarly, if $t \ne 0$, then since $$t(x_{A}-x_{B})(y_{B}-y_{C}) = s(y_{C}-y_{B})(x_{A}-x_{B})
= (y_{C}-y_{B})(y_{A}-y_{B})t,$$ we obtain $(x_{A}-x_{B})(y_{B}-y_{C})=(y_{C}-y_{B})(y_{A}-y_{B}).$ This proves that 
$p$ is a rectangle. 
 \end{proof}

   The next theorem shows that if there is a rectangle at infinity (which is the case if $\k=\R$), then there is a rectangle at infinity having slope orthogonal to the first. A similar theorem has been proved by Schwartz \cite[Theorem 1.3]{Sch}
   in his setting. 
 
  \begin{theorem}\label{three} 
Let $s,t \in \k$ with not both $s$ and $t$ equal to $0$. Then $s/t$ is the slope of a rectangle at infinity in $\P\C$ if and only if $\sigma(s,t)=0$; if and only if  $-t/s$ is the slope of a rectangle at infinity.
\end{theorem}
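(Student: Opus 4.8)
The plan is to read off both equivalences from Lemma~\ref{four eq a}, which already packages the slope condition for a parallelogram into a single matrix equation $M\begin{bmatrix} x_A \\ x_B\end{bmatrix} = wU$ with $\det M = \sigma(s,t)$. A rectangle at infinity has $w=0$, so for such a point the condition becomes the homogeneous system $M\begin{bmatrix} x_A \\ x_B\end{bmatrix} = 0$. Therefore the first step is: $s/t$ is the slope of some rectangle at infinity if and only if there is a nonzero solution $(x_A,x_B)$ to $M\begin{bmatrix} x_A \\ x_B\end{bmatrix} = 0$, together with the observation that such a solution really does extend to an honest point of $\P\C$ at infinity (we need $(x_A,y_A,\dots,x_D,y_D)$ not all zero; I would note that if $x_A=x_B=0$ then the parallelogram equations \eqref{xc} force $x_C=x_D=0$ and all $y_L = m_L x_L + b_L\cdot 0 = 0$, so the extension to a genuine projective point requires exactly that $(x_A,x_B)\ne(0,0)$). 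A homogeneous $2\times 2$ system has a nonzero solution if and only if its determinant vanishes, i.e.\ $\sigma(s,t)=0$, giving the first ``if and only if.''

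For the second equivalence I would use the antisymmetry already visible in Notation~\ref{basic poly 2}: $\sigma(S,T)$ has the shape $\alpha S^2 - \beta ST - \alpha T^2$ with $\alpha = m_Am_C - m_Bm_D$, and a direct substitution shows $\sigma(-t,s) = \alpha t^2 + \beta st - \alpha s^2 = -(\alpha s^2 - \beta st - \alpha t^2) = -\sigma(s,t)$. Hence $\sigma(s,t)=0$ if and only if $\sigma(-t,s)=0$, and the first equivalence applied to the pair $(-t,s)$ (which is nonzero whenever $(s,t)$ is) shows this holds if and only if $-t/s$ is the slope of a rectangle at infinity. I should also remark that $-t/s$ is indeed ``orthogonal'' to $s/t$ in the sense of the convention introduced before Definition~\ref{twin}, since $s\cdot(-t) + t\cdot s = 0$; this is the geometric content worth flagging, matching the cited Schwartz result.

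The only real subtlety is the edge case where $\sigma$ vanishes identically, i.e.\ when $\C$ has twin pairs (as noted after Notation~\ref{basic poly 2}): then \emph{every} $s/t$ satisfies $\sigma(s,t)=0$, and correspondingly, by Lemma~\ref{at infinity}, every parallelogram at infinity is a rectangle at infinity, so every slope is realized. I would confirm the statement is still literally true in this case — both sides of each biconditional hold for all $(s,t)$ — so no separate argument is needed, but it is worth a sentence so the reader does not worry that a nonzero-determinant argument is being smuggled in. I expect this case-bookkeeping, rather than any computation, to be the main thing to get right; the determinant identity $\det M = \sigma(s,t)$ is quoted from Lemma~\ref{four eq a}, and the antisymmetry $\sigma(-t,s) = -\sigma(s,t)$ is a one-line check.
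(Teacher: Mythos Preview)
Your proposal is correct and follows essentially the same approach as the paper: both reduce to Lemma~\ref{four eq a} with $w=0$, observe that a nonzero $(x_A,x_B)$ is forced (since otherwise all coordinates vanish via the parallelogram equations), conclude via the determinant condition $\det M=\sigma(s,t)=0$, and then note the symmetry $\sigma(s,t)=0\iff\sigma(-t,s)=0$. Your additional remarks on the twin-pairs edge case and the orthogonality interpretation are helpful but not needed for the argument.
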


\begin{proof}
   Suppose that $s/t$ is the slope of a rectangle at infinity $[x_A:y_A:\cdots:x_D:y_D:0]$. By Lemma~\ref{four eq a},    there is a matrix $M$ whose entries depend on $s$ and $t$ with $\det(M) = \sigma(s,t)$ and 
      \begin{eqnarray} \label{hom matrix} M \begin{bmatrix}
 x_A \\ x_B
 \end{bmatrix} &= &  \begin{bmatrix}
 0 \\ 0
 \end{bmatrix}.
 \end{eqnarray}  
 If $x_A=x_B=0$, then all the coordinates of  $[x_A:y_A:\cdots:x_D:y_D:0]$ are $0$ via the equations in the proof of Theorem~\ref{rectangle in}, a contradiction to the fact that this point is in $\P^8$. Therefore, the matrix equation has a nonzero solution, which implies that $\sigma(s,t)=\det(M) = 0$.  Conversely, if $\sigma(s,t) =\det(M)= 0$, then the equation~(\ref{hom matrix}) has a nonzero solution, and hence by Lemma~\ref{four eq a} there is a rectangle at infinity having slope $s/t$.  
 Finally, from the definition of $\sigma$ it is clear that   $\sigma(s,t) =0$ if and only if $\sigma(-t,s)=0$.
 %
 \end{proof}

As with slope, the aspect ratio of a rectangle is made more complicated by  degenerate rectangles, a case we handle similarly to that of slope by using a pair of homogeneous linear equations. 
As with slope, we view aspect ratio as a point in ${\mathbb{P}}^1$.

  \begin{definition}\label{ar def} A rectangle
  $[x_A:y_A:\cdots:x_D:y_D:w] \in \P\C$ 
   has {\it aspect ratio} $u/v \in \P^1$ if $u,v$ is a solution to the system of equations 
\begin{eqnarray}  \label{ar eqs} \begin{cases} 
(x_B-x_C)U- (y_A-y_B)V  = 0 \\
(y_B-y_C)U +(x_A-x_B)V = 0\\
\end{cases} 
\end{eqnarray} 
 
\end{definition}

The aspect ratio of a rectangle in $\P\C$ is well defined because the orthogonality condition in Definition~\ref{rectangle def} guarantees the system (\ref{ar eqs}) has a nonzero solution. It is a simple consequence of the equations in Definition~\ref{ar def} that in the case where $\k = \R$, 
the absolute value of the aspect ratio   in Definition~\ref{ar def} coincides with the usual definition of aspect ratio of a rectangle in terms of a ratio of  lengths of sides.   

The two vertices $(x_A,y_A)$ and $(x_B,y_B)$ are the same if and only if the aspect  ratio of the rectangle is $0/1$. Similarly,  $(x_B,y_B) = (x_C,y_C)$ if and only if  the aspect ratio is ``infinite;''  i.e., $1/0$. The  (degenerate) rectangle with slope $0/1$ lies on the diagonal $E$ while the rectangle with slope $1/0$  lies on $F$.

\begin{notation} \label{basic poly b 2} Define a  polynomial in $\k[U,V]$     by  \begin{eqnarray*}
\alpha(U,V) & = &  m_{BC}m_{AD}U^2 -\gamma UV+ m_{AB}m_{CD}V^2,
\end{eqnarray*} 
where $\gamma = (m_Am_C-1)(m_B+m_D)-(m_Bm_D-1)(m_A+m_C)$
%
%
 is as in Notation~\ref{basic poly 2}. 

\end{notation}





  
 The analogue of Lemma~\ref{four eq a} for aspect ratio is the following  lemma. A related version of the lemma appears in \cite[Section 2.4]{Sch} and is used for similar purposes of finding the aspect ratios that do not occur for a rectangle in $\C$.

\begin{lemma}   \label{four eq b} 
Let $u,v\in\k$, where not both $u,v$ are zero. There  is a $2\times 1$ vector $U$     and a $2\times 2$ matrix $M$
with
 entries in $\k$ and determinant $m_{CD}\alpha(u,v)$  such that for each $w \in \k$,  a parallelogram 
 $[x_A:y_A:\cdots:x_D:y_D:w] \in \P\C$
  is a rectangle with slope  $u/v$   if and only if 
 $$M  \begin{bmatrix}
 x_A \\ x_B
 \end{bmatrix} = wU.$$  %
 %

\end{lemma}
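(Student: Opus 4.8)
The plan is to mirror the proof of Lemma~\ref{four eq a} as closely as possible, swapping the slope equations (\ref{slope eqs}) for the aspect-ratio equations (\ref{ar eqs}). First I would start from a parallelogram $p = [x_A:y_A:\cdots:x_D:y_D:w] \in \P\C$ and use the expressions already derived in the proof of Theorem~\ref{rectangle in}, namely $x_C = \frac{1}{m_{DC}}(m_{AD}x_A + m_{DB}x_B + (b_A-1)w)$ and $x_D = x_A + x_B - x_C$, together with $y_L = m_L x_L + b_L w$ for each $L$, to rewrite the aspect equations (\ref{ar eqs}) purely in terms of $x_A$, $x_B$, and $w$. After substituting, the two equations become linear in $x_A, x_B$ with right-hand sides linear in $w$; collecting coefficients produces an explicit $2\times 2$ matrix $M$ and an explicit $2\times 1$ vector $U$ (with $U$ carrying the $w$-dependence) so that the aspect equations hold if and only if $M\begin{bmatrix} x_A \\ x_B \end{bmatrix} = wU$. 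I would display $M$ and $U$ explicitly, just as in Lemma~\ref{four eq a}; the first row of $M$ comes from the equation $(x_B - x_C)u = (y_A - y_B)v$ and the second from $(y_B - y_C)u = -(x_A - x_B)v$, after clearing the factor $m_{DC}$, which is why the determinant carries an extra factor of $m_{CD}$ compared to the slope case.

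The second ingredient is the determinant computation: I would verify by direct expansion that $\det(M) = m_{CD}\,\alpha(u,v)$, where $\alpha$ is the polynomial of Notation~\ref{basic poly b 2}. This is the one genuinely computational step, but it is bounded and routine — the cross terms in $UV$ must be checked to assemble into $\gamma = (m_Am_C-1)(m_B+m_D) - (m_Bm_D-1)(m_A+m_C)$, and the $u^2$ and $v^2$ coefficients into $m_{BC}m_{AD}$ and $m_{AB}m_{CD}$ respectively. One can shortcut part of this by noting the structural parallel with $\sigma$: $\alpha$ is obtained from the slope polynomial's derivation by the same substitutions but with the aspect equations, and the identity in Remark~\ref{slope remark} has an exact analogue for $\alpha$ that can be used as a consistency check.

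Finally I would handle the converse direction exactly as in Lemma~\ref{four eq a}: if $M\begin{bmatrix} x_A \\ x_B\end{bmatrix} = wU$ holds for some $x_A, x_B, w$, then unwinding the substitutions recovers the two aspect equations $(x_B - x_C)u = (y_A-y_B)v$ and $(y_B - y_C)u = -(x_A-x_B)v$; since $p$ is assumed to be a parallelogram and $u, v$ are not both zero, a short argument splitting on whether $u \ne 0$ or $v \ne 0$ — using the orthogonality relation of Definition~\ref{rectangle def} together with the parallelogram relations — shows that $p$ is in fact a rectangle, and hence a rectangle of aspect ratio $u/v$. The main obstacle is purely bookkeeping: keeping the roles of $u$/$v$ and of the $A,B$ versus $B,C$ edges straight in the substitution so that the determinant lands on $m_{CD}\alpha(u,v)$ rather than some sign- or factor-variant of it; there is no conceptual difficulty beyond what Lemma~\ref{four eq a} already resolved.

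I should also note that the statement as written says "with slope $u/v$" where it evidently should read "with aspect ratio $u/v$"; the proof establishes the aspect-ratio version, which is the one consistent with Notation~\ref{basic poly b 2} and with the use the lemma is put to afterward.
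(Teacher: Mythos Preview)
Your proposal is correct and follows essentially the same route as the paper's proof: the paper also substitutes the parallelogram expressions for $x_C$ and the $y_L$ into the aspect-ratio equations~(\ref{ar eqs}), clears the $m_{DC}$ denominator, reads off explicit $M$ and $U$, verifies $\det(M)=m_{CD}\alpha(u,v)$ by direct calculation, and handles the converse by the case split $v=0$, $u=0$, $uv\ne 0$ to recover the rectangle condition. You also correctly identify the typo in the statement (``slope $u/v$'' should read ``aspect ratio $u/v$'').
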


%


\begin{proof} Let $p=[x_A:y_A:\cdots:x_D:y_D:w]$ be a parallelogram in $\P\C$.  As in the proof of Theorem~\ref{rectangle in} we have   
 $$x_C = \frac{1}{m_{DC}} (m_{AD}x_A+ 
m_{DB}x_B+(b_A-1)w), \:\: x_D = x_A+x_B-x_C
$$   and $y_L = m_Lx_L+b_Lw$ for all $L \in\{A,B,C,D\}$. 
Let 
$$M=\begin{bmatrix}
m_{DA}u+m_Am_{CD}v & m_{BC}u+m_Bm_{DC}v \\
m_Cm_{DA}u+m_{DC}v
& m_Dm_{BC}u+m_{CD}v
\end{bmatrix}, \: \: 
U=\begin{bmatrix}
(b_A-1)(vm_{CD}-u) \\
(b_Am_C-m_D)u \\
 \end{bmatrix}.$$
A calculation shows that the determinant of $M$ is $m_{CD}\alpha(u,v)$.  

Substitution into the defining equations for  aspect ratio show that $u,v,x_A,x_B,w$ satisfy the equations   
 $$
  \begin{cases} 
 0 \: = \: \left(m_{DC}x_B-{m_{AD}}x_A-m_{DB} x_B-(b_A-1)w\right)u  -m_{DC}(m_Ax_A-m_Bx_B+(b_A-1)w)v
  \\ 0 \:= \: \left(m_C\left(m_{AD} x_A+m_{DB} x_B+(b_A-1) {w}\right)-m_{DC}m_Bx_B-m_{DC}w\right)u -m_{DC}x_{AB}v \\
  \end{cases}$$ 
if and only if 
 $M \begin{bmatrix}
x_A \\
x_B \\
 \end{bmatrix} = wU$. Therefore, if  $p$ is a rectangle of aspect ratio $u/v$, then the matrix equation in the lemma holds for $x_A,x_B$ and $w$.  
 
To prove the converse, suppose that the matrix equation holds for $x_A,x_B,w$.  To see that $p$ is a rectangle, 
 we use the fact that either $u \ne 0$ or $ v \ne 0$.   
Suppose $v = 0$. Then $u \ne 0$. The matrix equation implies that the  equations for aspect ratio in Definition~\ref{ar def} are valid for $u$ and $v$.  
Since $v=0$ and $u \ne 0$, we conclude that $x_B = x_C, y_B=y_C,x_A=x_D,y_A=y_D$.  In this case, $p$ is a degenerate rectangle   in $\C$ with aspect ratio  $1/0$.  
Similarly, if $u = 0$, then $v \ne 0$ and $y_A=y_B, x_A=x_B,y_C=y_D,x_C=x_D$, and $p$ is a degenerate rectangle   in $\C$ with aspect ratio $0/1$.  
Finally, if $u,v \ne 0$, then $(x_{A}-x_{B})(y_{B}-y_{C})vu = (y_{C}-y_{B})(y_{A}-y_{B})uv,$ which implies that 
$(x_{A}-x_{B})(y_{B}-y_{C}) + (y_{B}-y_{C})(y_{A}-y_{B})=0.$
Thus $p$ is a rectangle in $\C$ with aspect ratio $u/v$.  
\end{proof}

 \begin{theorem} \label{ar at infinity thm} An element $u/v$ in $\P^1$ is the aspect ratio of a rectangle  at infinity if and only if 
 $\alpha(u,v)=0$.

%
 %
 \end{theorem}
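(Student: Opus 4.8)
The plan is to follow the template of the proof of Theorem~\ref{three}, replacing Lemma~\ref{four eq a} by its aspect-ratio analogue Lemma~\ref{four eq b}. The key point is that Lemma~\ref{four eq b} supplies, for each $u,v \in \k$ not both zero, a $2\times 2$ matrix $M$ and a $2\times 1$ vector $U$ with $\det M = m_{CD}\alpha(u,v)$ such that a parallelogram $[x_A:y_A:\cdots:x_D:y_D:w]$ in $\P\C$ is a rectangle of aspect ratio $u/v$ if and only if $M\,[x_A\ x_B]^{\mathsf T} = wU$. Specializing to the points at infinity, i.e.\ $w=0$, this becomes the homogeneous condition $M\,[x_A\ x_B]^{\mathsf T}=0$.

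First I would prove the forward direction. Suppose $u/v$ is the aspect ratio of a rectangle at infinity $[x_A:y_A:\cdots:x_D:y_D:0]$. By Lemma~\ref{four eq b} we have $M\,[x_A\ x_B]^{\mathsf T}=0$. If $x_A=x_B=0$, then by the formulas for $x_C,x_D$ (and $y_L=m_Lx_L$) in the proof of Theorem~\ref{rectangle in} with $w=0$, every homogeneous coordinate of the point vanishes, contradicting that it lies in $\P^8$. Hence the homogeneous system has a nonzero solution, so $\det M = m_{CD}\alpha(u,v)=0$. Since $C$ and $D$ are not parallel by the standing assumptions, $m_{CD}=m_C-m_D\ne 0$, and therefore $\alpha(u,v)=0$.

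For the converse, assume $\alpha(u,v)=0$. Then $\det M = m_{CD}\alpha(u,v)=0$, so the system $M\,[x_A\ x_B]^{\mathsf T}=0$ admits a nonzero solution $(x_A,x_B)$. Defining $x_C,x_D$ by the equations $(\ref{xc})$ with $w=0$ and $y_L=m_Lx_L$ for $L\in\{A,B,C,D\}$ produces a point $p=[x_A:y_A:\cdots:x_D:y_D:0]$; it is a genuine point of $\P^8$ because at least one of $x_A,x_B$ is nonzero. By the construction in Theorem~\ref{rectangle in}, $p$ is a parallelogram in $\P\C$, and by Lemma~\ref{four eq b} (with $w=0$) it is a rectangle of aspect ratio $u/v$, which is a rectangle at infinity since $w=0$.

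I do not expect a serious obstacle here; the content is packaged into Lemma~\ref{four eq b}. The only two things to be careful about are (a) invoking $m_{CD}\ne 0$ to pass from $m_{CD}\alpha(u,v)=0$ to $\alpha(u,v)=0$, which is immediate from the standing assumptions, and (b) checking that a nonzero solution $(x_A,x_B)$ really yields a well-defined point of $\P^8$ rather than the zero vector — but this is automatic since $x_A$ and $x_B$ are themselves among the homogeneous coordinates.
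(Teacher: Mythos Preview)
Your proof is correct and follows exactly the approach the paper intends: the paper's own proof is simply the remark that one proceeds as in Theorem~\ref{three} but invokes Lemma~\ref{four eq b} in place of Lemma~\ref{four eq a}, and you have spelled out precisely those details, including the use of $m_{CD}\ne 0$ from the standing assumptions.
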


 \begin{proof} The proof   is similar to that of Theorem~\ref{three} but appeals to Lemma~\ref{four eq b} rather than Lemma~\ref{four eq a}. 
%
 \end{proof} 
 
 Theorem~\ref{three} shows how to find the slope of a second rectangle at infinity given a first. The next remark is the analogue for aspect ratio.

 \begin{remark} \label{cross}
 Three of the lines $A,B,C,D$ are parallel if and only if there are two  (degenerate)
rectangles at infinity, one with aspect ratio $0/1$ and the other with aspect ratio $1/0$. 
This follows from Theorem~\ref{ar at infinity thm} since inspection of the polynomial $\alpha$ shows that three of the lines $A,B,C,D$ are parallel if and only if  $\alpha(U,V) = -\gamma UV$; if and only if $\alpha(1,0)=\alpha(0,1) =0$. 
Otherwise, if no three of the lines are parallel and $u/v$ is  the aspect ratio of a rectangle at infinity, then  $(m_{AB}m_{CD}v)/(m_{BC}m_{AD}u)$
 is also the aspect ratio of a rectangle at infinity.   To see this, observe that 
 $\alpha(m_{AB}m_{CD}V,m_{BC}m_{AD}U) = m_{AB}m_{CD}m_{BC}m_{AD}\alpha(U,V),$ so since $\alpha(u,v) = 0$, then $\alpha(m_{AB}m_{CD}v,m_{BC}m_{AD}u)=0$ and the claim follows. \end{remark}

 
 As Schwartz points out in \cite{Sch},
 if $A$ is not parallel to $B$, then the quantity  $ \frac{m_{BC}m_{AD}}{m_{AB}m_{CD}}$ from Remark~\ref{cross} is  a cross ratio, and so  is an invariant of the pairs $A,C$ and $B,D$ under projective transformations of the plane.

\section{The slope path}

While the ideas in Section 3 give equations that govern the  rectangles in $\C$, they do not directly find rectangles of a specified slope. To remedy this, we develop the notion of a slope path that   gives a regular map from ${\mathbb{P}}^1$ to $\P\C$ so that an element $\sigma$ in  ${\mathbb{P}}^1$ is sent to a rectangle  with slope   $\sigma$. 
In Notation~\ref{slope notation} we propose candidates for a parameterization of the slope path, 
 and in Theorem~\ref{big geo} we prove that these candidates work. 
%
Much of what follows in this and later sections depends on how these polynomials are written  in terms of two polynomials $\E(S,T)$ and $\F(S,T)$ that encode information about the diagonals $E$ and $F$ via Lemma~\ref{Delta 0}. Thus it is not so much the existence of the equations in Notation~\ref{slope notation} that is the issue---versions of  parameterizing polynomials can be found by computational means
using Lemma~\ref{four eq a}---but the {\it form} of the polynomials and how they encode  information about the diagonals. This matters for the ability to give conceptual rather than purely computational proofs of the results in this and later sections.  In this sense, Notation~\ref{slope notation} is one of our main theorems. 

Frst we define the polynomials $\E$ and $\F$. For this, recall from Remark~\ref{diagonal remark} and Lemma~\ref{Delta 0} that it cannot happen that both $e_1=e_2=0$ and $f_1=f_2=0$. 

\begin{notation} 
\label{basic poly nota} With indeterminates $S$ and $T$  for $\k$,  we define 
\begin{itemize}
\item  $\E(S,T) =0$ and $  \F(S,T) = 1$  if  $e_1=e_2=0;$
\item   $  \E(S,T) = e_1S+e_2T$ and $\F(S,T) = f_2S-f_1T$  if  $e_1f_1+e_2f_2\ne 0$. 
\end{itemize}
In all other cases,  
let $ \E(S,T)=1 $ and let  $\F(S,T) $ be $\frac{f_2}{e_1}$ if $e_1\ne 0$  and $-\frac{f_1}{e_2}$ if $e_2 \ne 0$. 
  \end{notation}


%




We define the coordinate polynomials for the slope path using $\E$ and $\F$.

\begin{notation}  \label{slope notation} \: 
\smallskip

\begin{center}
\begin{tabular}{ l }
 $\X_A(S,T)  =  (m_CT-S)\cdot {\c E}+S \cdot {\c F}$ \hspace{.3in}  $ \X_C(S,T)  =  (m_DT-S) \cdot {\c E}$  \\ 
  $\X_B(S,T)  =  (m_DT-S)\cdot {\c E} + S \cdot {\c F}$  \hspace{.3in}  $\X_D(S,T)  =  (m_CT-S) \cdot {\c E}$ \\ 
 ${\c X}(S,T) \:  = \: m_{BC}(S-m_DT) \cdot \E-(T+m_BS) \cdot \F$   \\
 $\Y_L(S,T)  = m_{L}\cdot \X_L+b_L\cdot \X$, where $L \in \{A,B,C,D\}$   \\
\end{tabular} 
\end{center}
\smallskip

\end{notation}

\begin{remark} \label{basic poly} The polynomial
$\sigma(S,T)$ of Notation~\ref{basic poly 2} is the polynomial $\X(S,T)$ with $\E$ and $\F$ replaced by $e_1S+e_2T$ and $f_2S-f_1T$, respectively. In the ring $\k[S,T]$,    $\X(S,T)$ divides 
the polynomial $\sigma(S,T)$, with 
equality if and only if $e_1f_1+e_2f_2 \ne 0$. 
 \end{remark}




The polynomials from Notation~\ref{slope notation} are used to find rectangles with specified slope:

\begin{theorem} \label{big geo}  The regular map  $\pi:{\mathbb{P}}^1 \rightarrow {\mathbb{P}}\C$ defined for all $s/t \in {\mathbb{P}}^1$ by
$$
\pi(s/t) = [\X_A(s,t):\Y_A(s,t):\cdots : \X_D(s,t):\Y_D(s,t):\X(s,t)]$$ sends $s/t$ to a rectangle  with slope $s/t$ and aspect ratio $m_{CD}
{\E(s,t)}/{\F(s,t)}$.

\end{theorem}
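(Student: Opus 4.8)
The plan is to verify three things about the point $\pi(s/t)$: that it actually lies in $\P\C$ (i.e.\ it is a parallelogram), that it is a rectangle of slope $s/t$, and that its aspect ratio is $m_{CD}\E(s,t)/\F(s,t)$. For the first two, the natural tool is Lemma~\ref{four eq a}: it gives a matrix $M$ with $\det M = \sigma(s,t)$ and a vector $U$ so that a parallelogram $[x_A:\cdots:w]$ is a rectangle of slope $s/t$ precisely when $M\bigl[\begin{smallmatrix} x_A \\ x_B\end{smallmatrix}\bigr] = wU$. So the first step is to record the explicit $M$ and $U$ from that proof, substitute $x_A = \X_A(s,t)$, $x_B=\X_B(s,t)$, $w = \X(s,t)$, and check the matrix identity $M\bigl[\begin{smallmatrix}\X_A \\ \X_B\end{smallmatrix}\bigr] = \X\cdot U$ as an identity in $\k[S,T]$. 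Note $\X_A - \X_B = (m_C - m_D)T\cdot\E = -m_{CD}T\cdot\E$, which should make the first row collapse nicely; the second row is the substantive computation and will use Remark~\ref{basic poly} (that $\X$ agrees with $\sigma$ up to the factor coming from $\E,\F$) together with the definition of $\F$ via $f_1,f_2$.

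Once the matrix identity holds, Lemma~\ref{four eq a} tells us directly that $\pi(s/t)$ is a rectangle of slope $s/t$ --- \emph{provided} we first know it is a genuine point of $\P^8$ (not all coordinates zero) and a parallelogram. For the parallelogram condition I would not invoke the lemma but instead check the two equations $(\ref{xc})$ from Theorem~\ref{rectangle in} directly: $\X_C = \frac{1}{m_{DC}}(m_{AD}\X_A + m_{DB}\X_B + (b_A-1)\X)$ and $\X_D = \X_A - \X_B + \X_C$. The second is immediate since $\X_A - \X_B = (m_CT - m_DT)\E = \X_D - \X_C$ from the definitions. The first requires expanding $m_{AD}\X_A + m_{DB}\X_B$ --- here the $\F$-terms contribute $(m_{AD} + m_{DB})S\cdot\F = m_{AB}S\cdot\F$ wait, $m_{AD}+m_{DB} = m_A - m_B = m_{AB}$, and the $\E$-terms combine to something proportional to $\X_C = (m_DT - S)\E$; matching up the leftover against $(b_A-1)\X$ is where the constants $e_1,e_2,f_1,f_2$ and the relation $\X = m_{BC}(S - m_DT)\E - (T+m_BS)\F$ get used. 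The $\Y_L$ coordinates are defined to satisfy $y_L = m_L x_L + b_L w$ by fiat, so once the $\X_L$ check out, $\pi(s/t)\in\P\C$. Non-vanishing: if all $\X_L$ and $\X$ were zero then in particular $(m_CT-S)\E = (m_DT-S)\E = 0$, forcing $\E = 0$ (since $m_C\neq m_D$) hence $e_1 = e_2 = 0$, hence $\F = 1$ by Notation~\ref{basic poly nota}, but then $\X_A = S\F = S$ and $\X = -(T+m_BS)$, which cannot both vanish unless $s=t=0$ --- contradiction. The remaining case distinctions in the definition of $\E,\F$ should be handled the same way.

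For the aspect-ratio claim, the cleanest route is to plug the coordinates of $\pi(s/t)$ into the aspect-ratio equations $(\ref{ar eqs})$ of Definition~\ref{ar def} with $u = m_{CD}\E(s,t)$, $v = \F(s,t)$, and verify they hold. The relevant differences are $x_B - x_C = \X_B - \X_C = S\cdot\F$ (a pleasant simplification, since $(m_DT-S)\E$ cancels), and $x_A - x_B = \X_A - \X_B = -m_{CD}T\cdot\E$; together with $y_A - y_B = m_A\X_A - m_B\X_B + (b_A-1)\X$ and $y_B - y_C = m_B\X_B - m_C\X_C - \X$. Substituting, the first aspect equation becomes $S\F\cdot(m_{CD}\E) - (m_A\X_A - m_B\X_B + (b_A-1)\X)\F = 0$, i.e.\ (dividing by $\F$, handling the degenerate $\F$-cases separately) $m_{CD}S\E = m_A\X_A - m_B\X_B + (b_A-1)\X$; the second becomes $(m_B\X_B - m_C\X_C - \X)(m_{CD}\E) + (-m_{CD}T\E)\F = 0$. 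Both reduce to polynomial identities in $S,T$ that follow by expanding the definitions of $\X_A,\X_B,\X_C,\X$ and using the definitions of $e_i,f_i$. I expect the hard part to be precisely this last bookkeeping: keeping the three separate cases in Notation~\ref{basic poly nota} straight and confirming that in each case the substitution $(\E,\F)\mapsto(e_1S+e_2T,\,f_2S-f_1T)$ used in Remark~\ref{basic poly} to relate $\X$ to $\sigma$ is compatible with the matrix $M,U$ of Lemma~\ref{four eq a} --- in other words, that the common scalar factor by which $\X$ may differ from $\sigma$ does not disturb the rectangle and aspect-ratio equations. Once that is pinned down, everything else is routine symbol-pushing, ideally organized so that the $\E$-coefficient and $\F$-coefficient of each equation are checked separately.
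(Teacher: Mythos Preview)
Your plan is sound and would produce a valid proof, but it takes a more circuitous route than the paper's. The paper does not go through Lemma~\ref{four eq a} at all. Instead it isolates four ``difference identities''
\[
\Y_A - \Y_B = m_{CD}S\,\E,\quad \X_A - \X_B = m_{CD}T\,\E,\quad \Y_B - \Y_C = -T\,\F,\quad \X_B - \X_C = S\,\F,
\]
proved first for $\E^* = e_1S+e_2T$, $\F^* = f_2S-f_1T$ (via one auxiliary identity $b_A\X^* = m_{AD}(S-m_CT)\E^* + (m_AS+T)\F^*$, which is where the arithmetic with the $e_i,f_j$ actually happens), and then observes that dividing out the common factor preserves all four in each case of Notation~\ref{basic poly nota}. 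From these identities everything drops out in one line apiece: non-vanishing, the parallelogram equations $\X_A+\X_C=\X_B+\X_D$ and $\Y_A+\Y_C=\Y_B+\Y_D$ directly from the definition (not via~(\ref{xc})), the rectangle condition, both slope equations of Definition~\ref{slope def}, and both aspect-ratio equations of Definition~\ref{ar def}.

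You in fact rediscover two of these identities ($\X_A-\X_B$ and $\X_B-\X_C$) in your aspect-ratio step, but by routing the rectangle-and-slope verification through the matrix $M,U$ of Lemma~\ref{four eq a} you trade the other two for a $2\times2$ matrix equation whose second row is no easier to check and still requires unpacking the $e_i,f_j$. Your bookkeeping worry at the end is exactly what the paper's organization sidesteps: the case split in Notation~\ref{basic poly nota} is handled once, for the four identities, rather than separately for each verification. One small slip: $\X_A - \X_B = (m_C-m_D)T\,\E = +m_{CD}T\,\E$, not $-m_{CD}T\,\E$.
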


\begin{proof}  
Define $\E^*(S,T) = e_1S+e_2T$ and $\F^*(S,T) = f_2S-f_1T.$
For each $L \in \{A,B,C,D\}$, let $\X_L^*$ and $\Y^*_L$ be the polynomials $\X_L$ and $\Y_L$ with   $\E$ and $\F$ replaced by  $\E^*$ and $\F^*$. 
A calculation involving the $e_i$ and $f_j$ shows that 
$$b_A\X^*(S, T)= m_{AD}(S-m_CT)\E^*(S, T)+(m_AS+T)\F^*(S, T).$$
With this observation, the following identities are easily verified. 
\begin{center}
\begin{tabular}{ l l l } 
$ \Y^*_A -\Y^*_B \: = \:   m_{CD}S\cdot {\c E}^*$ & \:\:\:\: & 
$ \X^*_A-\X^*_B  \: =  \:  m_{CD}T \cdot {\c E}^*$ \\
$\Y^*_B- \Y^*_C  \: = \:   -T \cdot   {\c F}^* $&  \:\:\:\: & 
$\X^*_B-\X^*_C  \: =  \: S  \cdot {\c F}^*.$  \\
 \end{tabular}
 \end{center}
 We claim that these identities remain true with the asterisk removed from the superscripts.  Let $(\dagger)$ denote these same identities with the asterisk removed. To verify ($\dagger$), consider the cases in the definition of $\E$ and $\F$. 
If $e_1=e_2=0$, then
$$\E(S,T) = \E^*(S,T) = 0, \:\: \F(S,T) = \frac{\F^*(S,T) }{\F^*(S,T)},$$ so $(\dagger)$ holds. If $e_1f_1+e_2f_2 \ne 0$, then $\E^*=\E$ and $\F^*=\F$, so $(\dagger)$ is clear. Finally, if $e_1f_1+e_2f_2 = 0$ and $e_1 \ne 0$ or $e_2\ne 0$, then $$\E(S,T) = \frac{\E^*(S,T)}{\E^*(S,T)}=1, \:\: \F(S,T) = \frac{\F^*(S,T)}{\E^*(S,T)},$$ and so ($\dagger$) is verified. 
 
  Now we prove the theorem.  First, observe there is no common zero of $\X_A,\X_B,\X_C,\X_D$ and $\X$ in $\P^1$.  
  Indeed if $s/t \in \P^1$ is a common zero of these five polynomials, then using ($\dagger$), we conclude that $\E(s,t)=\F(s,t)=0$, which is impossible by the definition of   $\E$ and $\F$ and the fact that it cannot happen that $e_1=e_2=f_1=f_2=0$. Thus  
   the image of $\pi$ is indeed in $\P\C$.

Next, $\X_A + \X_C = \X_B + \X_D$ and  $\Y_A+\Y_C= \Y_B + \Y_D.$ 
  Thus, for all $s/t \in {\mathbb{P}}^1$, 
  $\pi(s/t)$ is a parallelogram   in $\P{\bf C}$.  
  Moreover, by ($\dagger$), 
$$(\X_A-\X_B)(\X_B-\X_C) =  m_{CD}ST{\c E}{\c F} = 
 -(\Y_A-\Y_B)(\Y_B-\Y_C),$$   and so 
$\pi(s/t)$ is a rectangle. 
 That the slope of the rectangle $\pi(s/t)$ is $s/t$ follows from the observation via ($\dagger$) that 
 $$(\X_B(s,t)-\X_A(s,t))s  \: = \: (\Y_B(s,t)-\Y_A(s,t))t,$$
$$
(\Y_C(s,t)-\Y_B(s,t))s \: = \: -(\X_C(s,t) - \X_B(s,t))t.$$ 
The aspect ratio of the rectangle   is calculated similarly from ($\dagger$):
 $$(\Y_B-\Y_A)\F\: =  \: -m_{CD}S\F \: = \: (\X_B-\X_C)m_{CD}\E,$$
$$
(\Y_C-\Y_B)m_{CD}\E \: = \: Tm_{CD}\F\E \: = \: (\X_A - \X_B)\F.$$  Thus the aspect ratio of the rectangle  $\pi(s/t)$ is  $m_{CD}\E(s,t)/\F(s,t)$. 
This proves the theorem.   
%
%
%
%
\end{proof}

\begin{definition} The map $\pi$ is the {\it slope path} for  ${\bf C}$. When no confusion can arise, we refer also to the image of $\pi$ as the slope path for ${\bf C}$. 
\end{definition}

The next corollary gives for all choices  $\sigma \in {\mathbb{P}}^1$ that are not zeros of $\X(S,T)$ the  coordinates of the vertices of the unique inscribed rectangle with slope $\sigma$.

\begin{corollary} \label{count} 
If $s/t \in {\mathbb{P}}^1$ such that $\X(s,t) \ne 0$, then
%
 there is a  rectangle in ${\bf C}$ with slope $s/t$ and vertices  
 \begin{center} 
 $\displaystyle{\left(\frac{\X_L(s,t)}{\X(s,t)},\frac{\Y_L(s,t)}{\X(s,t)}\right) \in L}$, where $L \in \{A,B,C,D\}$.  
 \end{center}


\end{corollary}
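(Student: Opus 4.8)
The plan is to deduce Corollary~\ref{count} directly from Theorem~\ref{big geo} by passing from the projective description of $\pi(s/t)$ in $\P\C$ to the affine coordinates of the vertices. The point of departure is that Theorem~\ref{big geo} already tells us $\pi(s/t) = [\X_A(s,t):\Y_A(s,t):\cdots:\X_D(s,t):\Y_D(s,t):\X(s,t)]$ is a rectangle in $\P\C$ with slope $s/t$. So the only work is to interpret this projective point as an honest rectangle in $\C = \C_1$, which requires precisely the hypothesis $\X(s,t) \ne 0$: the last homogeneous coordinate $w = \X(s,t)$ is then nonzero, so the equivalence class $[\X_A:\cdots:\X]$ has a representative with $w = 1$, obtained by dividing through by $\X(s,t)$.

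The key steps, in order, are as follows. First, invoke Theorem~\ref{big geo} to get that $\pi(s/t)$ is a rectangle in $\P\C$ of slope $s/t$. Second, recall from Definition~\ref{first notation} that a point $[x_A:y_A:\cdots:x_D:y_D:w] \in \P\C$ with $w \ne 0$ is the equivalence class of $(x_A/w, y_A/w, \ldots, x_D/w, y_D/w) \in \C_1 = \C$; apply this with $w = \X(s,t) \ne 0$ to obtain the vertex $(\X_L(s,t)/\X(s,t),\ \Y_L(s,t)/\X(s,t))$ on each line $L$. Third, note that slope is invariant under the scaling within the equivalence class (as remarked at the start of Section~4), so this affine rectangle still has slope $s/t$. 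Finally, for uniqueness: by Lemma~\ref{four eq a} there is a $2\times 2$ matrix $M$ with $\det M = \sigma(s,t)$ such that a parallelogram $[x_A:\cdots:x_D:y_D:w]$ of slope $s/t$ is a rectangle exactly when $M[x_A\ x_B]^{\mathsf T} = wU$; since $\X(s,t)$ divides $\sigma(s,t)$ (Remark~\ref{basic poly}) one must check that $\X(s,t) \ne 0$ forces $\det M \ne 0$ in the relevant cases, or argue uniqueness more directly from the fact that the rectangle of slope $s/t$ in $\C$ is cut out by the linear system fixing $x_C, x_D$ via $(\ref{xc})$ together with the slope equations, which has a unique solution once $w = 1$ is fixed and the coefficient matrix is invertible.

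I expect the main obstacle to be the uniqueness clause rather than the existence of the rectangle with the stated vertices, which is essentially a bookkeeping translation of Theorem~\ref{big geo}. The subtlety is that $\X(s,t)$ and $\sigma(s,t)$ need not be literally equal — they agree only when $e_1 f_1 + e_2 f_2 \ne 0$ — so one cannot simply say "$\det M = \sigma(s,t) = \X(s,t) \ne 0$ hence the linear system has a unique solution." One has to handle the degenerate cases of Notation~\ref{basic poly nota} separately: when $\X(s,t)$ is (up to a nonzero scalar) a proper factor of $\sigma(s,t)$, the vanishing locus of $\sigma$ is strictly larger, and one must verify that on the open set $\X(s,t) \ne 0$ the matrix $M$ of Lemma~\ref{four eq a} is still invertible — equivalently, that the other factor of $\sigma$ does not vanish there, which should follow from the defining relations among $e_1,e_2,f_1,f_2$ used to set up $\E$ and $\F$. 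Alternatively, uniqueness can be sidestepped by phrasing the corollary as an existence statement only, in which case the proof is a two-line consequence of Theorem~\ref{big geo} and Definition~\ref{first notation}.
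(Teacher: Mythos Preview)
Your existence argument is correct and matches the paper exactly: the paper's entire proof is the single line ``Apply Theorem~\ref{big geo}.'' The passage from the projective point $\pi(s/t)$ to the affine vertices by dividing through by the nonzero last coordinate $\X(s,t)$ is precisely the intended reading.

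However, you have manufactured a difficulty that is not there. Read the statement again: it says ``there is a rectangle in $\C$ with slope $s/t$ and vertices \ldots''. There is no uniqueness clause. Your entire discussion of Lemma~\ref{four eq a}, the distinction between $\X(s,t)$ and $\sigma(s,t)$, and the degenerate cases of Notation~\ref{basic poly nota} is addressing a claim the corollary does not make. You yourself note in your final sentence that if the corollary is existence-only then the proof is two lines --- and that is exactly the situation. (The prose preceding the corollary in the paper does mention the word ``unique,'' but that is commentary, not part of the formal statement, and the paper's one-line proof makes no attempt to establish it.)
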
 

\begin{proof} 
Apply Theorem~\ref{big geo}. 
 \end{proof}

  It can happen that the slope path is the line at infinity. This occurs if and only if $\C$ has twin pairs; 
  see Corollary~\ref{ap}. 
  The significance of this is that if the slope path is the line at infinity, then  there is only one slope possible for the  rectangles   in ${\bf C}$ (see Theorem~\ref{deg char}).


 

 

 \section{Aspect  path}

  We develop the aspect path along similar lines  by first proposing in Notation~\ref{ar notation} the parameterizing polynomials that are needed to find for each $\alpha \in {\mathbb{P}}^1$ a rectangle  in $\P\C$ with aspect ratio $\alpha$. 

\begin{notation}\label{MN} Define polynomials ${\c M}$ and ${\c N}$ in $\k[U,V]$ by  
\begin{itemize}
\item  $\M(U,V) =0$ and $  \N(U,V) = 1$  if  $f_1=e_2=0;$
\item   $  \M(U,V) = \frac{f_1}{m_{CD}}U+e_2V$ and $\N(U,V) = \frac{f_2}{m_{CD}}U- e_1V $  if  $e_1f_1+e_2f_2\ne 0$. 
\end{itemize}
In all other cases,  
let $ \M(U,V)=1 $ and let  $\N(U,V) $ be $-\frac{e_1}{e_2}$ if $e_2\ne 0$  and $\frac{f_2}{f_1}$ if $f_1 \ne 0$. 
\end{notation}

 %


 

The coordinate polynomials for the aspect path are now defined using ${\c M}$ and ${\c N}$.

\begin{notation} \label{ar notation} $\:$

 
\begin{center}
\begin{tabular}{ l }
${\c P}_A(U,V)  =  
 (U-m_{CD}V)\cdot \M -m_CU\cdot \N$ \hspace{.2in}  ${\c P}_C(U,V)  =  U \cdot \M- m_DU\cdot \N$ \\
$ {\c P}_B(U,V)  =  (U-m_{CD}V)\cdot \M -m_DU \cdot \N$  \hspace{.2in}  ${\c P}_D(U,V)  =  U \cdot \M - m_CU\cdot \N.$ \\
${\c P}(U,V)\: = \: \:  (m_{CD}m_BV-m_{BC}U)\cdot \M + (m_{BC}m_DU+m_{CD}V)\cdot \N$ \\
$\Q_L(U,V) = m_L {\c P}_L(U,V)+b_L {\c P}(U,V)$, where $L \in \{A,B,C,D\}$.  
\end{tabular}
\end{center}

\end{notation}

  \begin{remark} \label{basic poly b} 
In the ring $\k[U,V]$,   ${\c P}(U,V)$ divides the polynomial
$m_{CD}\alpha(U,V)$ from Notation~\ref{basic poly b 2},
 with equality holding if and only  if $e_1f_1+e_2f_2\ne 0$.  
  \end{remark}

\begin{theorem} \label{big geo 2}  The regular map  $\phi:{\mathbb{P}}^1 \rightarrow {\mathbb{P}}\C$ defined for all $u/v \in \P^1$ by $$\phi(u/v) =   [{\c P}_A(u,v):{\c Q}_A(u,v): \cdots : {\c P}_D(u,v):{\c Q}_D(u,v):{\c P}(u,v)]$$ sends $u/v$ to a rectangle in $\P\C$   with aspect ratio $u/v$ and slope $\M(u,v)/\N(u,v).$






\end{theorem}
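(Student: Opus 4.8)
The plan is to mirror the proof of Theorem~\ref{big geo} almost verbatim, with the roles of slope and aspect ratio interchanged, using Lemma~\ref{four eq b} in place of Lemma~\ref{four eq a}. First I would introduce the ``starred'' versions $\M^*(U,V) = \frac{f_1}{m_{CD}}U + e_2 V$ and $\N^*(U,V) = \frac{f_2}{m_{CD}}U - e_1 V$, and for each $L$ let $\P_L^*, \Q_L^*$ and $\P^*$ be the polynomials of Notation~\ref{ar notation} with $\M,\N$ replaced by $\M^*,\N^*$. As in the previous proof, a single calculation with the $e_i$ and $f_j$ should produce an auxiliary identity expressing $b_A\P^*(U,V)$ as a combination of $\M^*$ and $\N^*$ (the analogue of $b_A\X^*(S,T) = m_{AD}(S-m_CT)\E^*(S,T) + (m_AS+T)\F^*(S,T)$), and from this the four ``difference identities''
\begin{center}
\begin{tabular}{ l l l }
$\Q_A^* - \Q_B^* \: = \:$ (something)$\cdot\N^*$ & \quad & $\P_A^* - \P_B^* \: = \:$ (something)$\cdot\N^*$ \\
$\Q_B^* - \Q_C^* \: = \:$ (something)$\cdot\M^*$ & \quad & $\P_B^* - \P_C^* \: = \:$ (something)$\cdot\M^*$ \\
\end{tabular}
\end{center}
should drop out by direct computation; the precise ``somethings'' are whatever makes the slope and aspect-ratio equations of Definitions~\ref{slope def} and~\ref{ar def} come out right.

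Next I would promote these starred identities to unstarred ones, call them $(\dagger)$, exactly as in Theorem~\ref{big geo}: one checks the three cases of Notation~\ref{MN} and~\ref{basic poly nota}. When $f_1 = e_2 = 0$ we have $\M = \M^* = 0$ and $\N = \N^*/\N^*$; when $e_1f_1 + e_2f_2 \neq 0$ we have $\M = \M^*$, $\N = \N^*$; and in the remaining case $e_1f_1+e_2f_2 = 0$ with $e_2 \neq 0$ or $f_1 \neq 0$ we have $\M = \M^*/\M^* = 1$ and $\N = \N^*/\M^*$ (or the symmetric expression), so in every case dividing through by the appropriate common factor preserves the identities. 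One must also note, using Remark~\ref{diagonal remark} and Lemma~\ref{Delta 0}, that $e_2 = f_1 = 0$ and $e_1 = f_2 = 0$ cannot hold simultaneously, so $\M$ and $\N$ have no common zero; this guarantees that $\P_A,\P_B,\P_C,\P_D,\P$ have no common zero in $\P^1$ (a common zero would force $\M(u,v) = \N(u,v) = 0$ via $(\dagger)$), hence $\phi$ really lands in $\P\C$. From $(\dagger)$ one reads off $\P_A + \P_C = \P_B + \P_D$ and $\Q_A + \Q_C = \Q_B + \Q_D$, so $\phi(u/v)$ is a parallelogram; the products $(\P_A - \P_B)(\P_B - \P_C)$ and $(\Q_A - \Q_B)(\Q_B - \Q_C)$ agree up to sign (both equal the same multiple of $\M\N\,UV$ or similar), so $\phi(u/v)$ is a rectangle; and substituting into the equations of Definition~\ref{ar def} shows its aspect ratio is $u/v$ while substituting into Definition~\ref{slope def} shows its slope is $\M(u,v)/\N(u,v)$.

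The main obstacle, as in the slope-path case, is purely computational: establishing the auxiliary identity for $b_A\P^*$ and the four difference identities requires an honest polynomial manipulation with the somewhat opaque quantities $e_1,e_2,f_1,f_2$, and one must get the bookkeeping with the factor $m_{CD}$ exactly right, since Lemma~\ref{four eq b} carries a $\det(M) = m_{CD}\alpha(u,v)$ rather than $\alpha(u,v)$, and Remark~\ref{basic poly b} says $\P(U,V)$ divides $m_{CD}\alpha(U,V)$. Everything conceptual is already in place from Theorem~\ref{big geo}; the work is to verify that the polynomials of Notation~\ref{ar notation} were defined correctly so that the same argument goes through with slope and aspect ratio swapped. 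Given the parallel structure, I would present the proof briefly, stating that it proceeds ``similarly to that of Theorem~\ref{big geo}'' and recording only the analogue of $(\dagger)$ and the final slope/aspect-ratio verifications.
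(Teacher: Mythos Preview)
Your proposal is correct and matches the paper's approach; the paper's proof is in fact even more abbreviated than yours, recording directly the auxiliary identity $b_A{\c P}(U,V) = (m_{CD}m_AV - m_{AD}U)\M + (m_{AD}m_CU + m_{CD}V)\N$ together with the four difference identities and saying only that they are ``verified along similar lines'' as in Theorem~\ref{big geo}. One small correction to your sketch: the pairing in $(\dagger)$ is \emph{crossed}, not parallel to the slope case---${\c P}_A - {\c P}_B = m_{DC}U\N$ but $\Q_A - \Q_B = m_{DC}U\M$, and ${\c P}_B - {\c P}_C = m_{DC}V\M$ but $\Q_B - \Q_C = m_{CD}V\N$---and it is exactly this crossing that makes the aspect ratio come out as $u/v$ while the slope comes out as $\M/\N$ when you substitute into Definitions~\ref{slope def} and~\ref{ar def}.
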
 

\begin{proof}  
The proof is similar to that of Theorem~\ref{big geo 2}. First, a calculation shows that 
$$ b_A{\c P}(U,V) \: = \: (m_{CD}m_AV-m_{AD}U)\M  + (m_{AD}m_CU +m_{CD}V)\N. $$
As in the proof of Theorem~\ref{big geo}, we use the following identities, which are verified along similar lines as the identities in the proof of that theorem.
$$
{\c P}_A -{\c P}_B =   m_{DC} U \N, \: 
 \Q_A-\Q_B  =   m_{DC}U\M, \: 
 \Q_B - \Q_C  = m_{CD}V\N, \:
{\c P}_B-{\c P}_C   =  m_{DC}V\M.$$
We first claim  
there is no $u/v \in \P^1$ such that ${\c P}_A(u,v)=\cdots={\c P}_D(u,v)={\c P}(u,v)=0$.  For if there were such a $u/v \in \P^1$, then 
 the fact that $m_C \ne m_D$ and  the above identities imply that ${\c M}(u,v) = {\c N}(u,v) = 0$, which is impossible by the definition of  ${\c M}$ and ${\c N}$.   
Thus the map $\phi$ sends each element of $\P^1$ into $\P \C$.
The proof now proceeds along the same lines as 
 that of Theorem~\ref{big geo}, making use of the stated identities.\end{proof}

\begin{definition} The 
 map $\phi$ (or its image, if the context is clear) is the {\it aspect path} in $\P\C$.
\end{definition}

The aspect path can be the line at infinity for the plane of parallelograms in $\P\C$; see Corollary~\ref{ap}.
 Schwartz \cite[Lemma 2.1]{Sch} has given a version of the aspect path for the case in which $\k =\R$,  ${\bf C}$ is nondegenerate (see Section 7 for a definition) and none of the lines $A,B,C,D$ are parallel or perpendicular. In this case with $\k=\R$, the aspect path has an interesting interpretation, one that is missing from our more general setting: The aspect path is the restriction of a hyperbolic isometry from the hyperbolic plane to the convex domain bounded by the hyperbola that is the rectangle locus for the configuration, where this domain is equipped with the Hilbert metric. 
 
 While Schwartz does not explicitly formulate a slope path, his arguments in  the proof of \cite[Lemma 2.9]{Sch} show that 
 under these same assumptions, there is a homography of the projective line that gives the slope path from the aspect path by factoring through this homography. That the configuration is non-degenerate is needed for the existence of the homography, and so the slope path, like the aspect path,  is only directly deducible in the case $\k=\R$ in \cite{Sch} for the  case in which $\C$ is non-degenerate. 
 Such a homography appears for us too, although for different reasons; see Theorem~\ref{homography}.

\section{Degeneracy of the slope and aspect paths}

The  slope   and aspect paths, since they consist of rectangles in $\P\C$, 
lie on the plane curve of degree $2$ from Theorem~\ref{planar} that is comprised of the rectangles 
in $\P\C$. We show in Corollary~\ref{union} that this curve  is the union of the two paths. But first in Theorem~\ref{deg char} we give criteria for when the aspect and slope paths are lines. The criterion in (1) involving the orthogonality of the diagonals $E$ and $F$ of the configuration (see Section 2) is important for how it connects degenerate behavior of the slope and aspect paths to an elementary geometric property of the configuration. In  the case where $\k = {\mathbb{R}}$ and no two lines in the configuration are parallel or perpendicular, 
Schwartz in \cite[Theorem~3.3]{Sch}  gave a 
 partial version of this criterion by showing in his setting that 
  statement (1)  implies that there are two sets of rectangles  in ${\bf C}$, one set of which consists of rectangles with the same slope and the other of  rectangles with the same aspect ratio.  










\begin{theorem} \label{deg char} The following are equivalent. 
\begin{enumerate}



\item The diagonals $E$ and $F$  are orthogonal. 

\item  
The slope path is a line. 

\item The aspect path is a line. 

\item Every rectangle  on the slope path has the same aspect ratio.
\item Every rectangle  on the aspect path has the same slope (which is the slope $F$).

\item The set of rectangles in $\P\C$  is the union of two distinct lines, one of which is the slope path and the other the aspect path.

\item $e_1f_1+e_2f_2 =0$.











\end{enumerate}
\end{theorem}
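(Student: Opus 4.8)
The plan is to prove the equivalence by establishing a cycle together with a few direct links, using the explicit polynomials from Notations~\ref{basic poly nota}, \ref{slope notation}, \ref{MN} and \ref{ar notation} and the geometric content of Lemma~\ref{Delta 0}. The natural anchor is the algebraic condition (7), $e_1f_1 + e_2f_2 = 0$: by Lemma~\ref{Delta 0}, the slope of $E$ is $e_1/e_2$ and the slope of $F$ is $f_1/f_2$ (when these diagonals are honest lines in $\k^2$), so the orthogonality condition for slopes $e_1/e_2$ and $f_1/f_2$ is exactly $e_1f_1 + e_2f_2 = 0$; this gives $(1)\Leftrightarrow(7)$ once one checks the degenerate subcases where $A=B$ (so $e_1=e_2=0$ and the convention makes $E$ the line $x+m_Ay=0$, which one verifies is orthogonal to $F$ iff $f_1f_2$-condition holds — here $e_1f_1+e_2f_2=0$ automatically) or $F$ is the line at infinity ($f_1=f_2=0$, again $e_1f_1+e_2f_2=0$ automatically, and the line at infinity is orthogonal to everything by our convention).

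Next I would link (7) to the path statements. By Remark~\ref{basic poly}, $\X(S,T)$ divides $\sigma(S,T)$ with equality iff $e_1f_1+e_2f_2\ne0$, and similarly by Remark~\ref{basic poly b}, $\c P(U,V)$ divides $m_{CD}\alpha(U,V)$ with equality iff $e_1f_1+e_2f_2\ne0$. When $e_1f_1+e_2f_2=0$, the definitions of $\E,\F$ (third bullet of Notation~\ref{basic poly nota}) force $\E$ to be the constant $1$ (or $0$ if $e_1=e_2=0$) and $\F$ a constant; feeding these constants into Notation~\ref{slope notation} makes $\X_A,\X_B,\X_C,\X_D,\X$ all \emph{linear} in $S,T$, so the image of $\pi$ is a line — this gives $(7)\Rightarrow(2)$, and symmetrically $(7)\Rightarrow(3)$ using $\M,\N$ constant in Notation~\ref{MN}. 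Moreover, with $\E,\F$ constant, Theorem~\ref{big geo} says the aspect ratio of $\pi(s/t)$ is the constant $m_{CD}\E(s,t)/\F(s,t)$, giving $(7)\Rightarrow(4)$; dually $(7)\Rightarrow(5)$, where one reads off from Theorem~\ref{big geo 2} that the constant slope of every rectangle on the aspect path is $\M/\N$, which by Notation~\ref{MN} is $-e_1/e_2 = f_1/f_2$ (using $e_1f_1+e_2f_2=0$), i.e. the slope of $F$ by Lemma~\ref{Delta 0}.

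For the converses I would argue contrapositively from $e_1f_1+e_2f_2\ne0$: then $\E(S,T)=e_1S+e_2T$ and $\F(S,T)=f_2S-f_1T$ are genuinely linear and not proportional (their being proportional would force $e_1f_1+e_2f_2=0$ — here I expect the main small obstacle, making sure the non-proportionality is exactly captured by $e_1f_1+e_2f_2\ne0$ rather than by a vanishing resultant; a short determinant computation $e_1(-f_1)-e_2 f_2 = -(e_1f_1+e_2f_2)$ settles it). Consequently the coordinate polynomials in Notation~\ref{slope notation} are genuinely quadratic, $\X=\sigma$ is a nonzero quadratic (it is identically zero only in the twin-pairs case, which forces both $E$ and $F$ to behave specially — one checks twin pairs implies $e_1f_1+e_2f_2=0$), and the aspect ratio $m_{CD}\E(s,t)/\F(s,t)$ genuinely varies, so the slope path is an irreducible conic, not a line: $(2)\Rightarrow(7)$ and $(4)\Rightarrow(7)$; dually $(3)\Rightarrow(7)$ and $(5)\Rightarrow(7)$. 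Finally, for (6): when $e_1f_1+e_2f_2=0$ the curve of rectangles in $\P\C$ is a degree-$2$ curve (Theorem~\ref{rectangle in}) containing the line that is the slope path and the line that is the aspect path, and these are distinct because on one every rectangle has the same aspect ratio while on the other every rectangle has the same slope (by (4),(5)) — were they equal, every rectangle would have both fixed slope and fixed aspect ratio, contradicting that the map $\pi$ is onto a $1$-dimensional image of varying slope; so a degree-$2$ curve containing two distinct lines must be their union, giving $(7)\Rightarrow(6)$, and conversely if the curve is a union of two distinct lines it is a degenerate conic, so the slope path (being contained in it and being irreducible of degree $\le 2$) must be one of those lines, hence a line, giving $(6)\Rightarrow(2)$. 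The hard part will be the bookkeeping of the degenerate subcases (twin pairs, $A=B$, $F$ at infinity) in the equivalence $(1)\Leftrightarrow(7)$ and in showing non-proportionality of $\E,\F$; the rest is a straightforward chase through the already-established divisibility remarks and Theorems~\ref{big geo} and \ref{big geo 2}.
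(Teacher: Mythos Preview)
Your proposal is correct and follows essentially the same route as the paper: both use condition (7) as the algebraic hub, invoke Lemma~\ref{Delta 0} for $(1)\Leftrightarrow(7)$, use the case split in Notations~\ref{basic poly nota} and~\ref{MN} to see that $\E,\F,\M,\N$ are constant precisely when $e_1f_1+e_2f_2=0$, and then read off (2)--(5) from Theorems~\ref{big geo} and~\ref{big geo 2}, with (6) handled by a degree-$2$ containment argument. One small slip: when $e_1f_1+e_2f_2=0$ and $\M=1$, $\N=-e_1/e_2$, the constant slope on the aspect path is $\M/\N=-e_2/e_1$, not $-e_1/e_2$ as you wrote; your final identification with $f_1/f_2$ (the slope of $F$) is nonetheless correct, since $e_1f_1+e_2f_2=0$ gives $-e_2/e_1=f_1/f_2$.
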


\begin{proof}
$(1) \Leftrightarrow (7)$: If $A =B$, $A = D$ or $F$ is the line at infinity, then the diagonals $E$ and $F$ are orthogonal by definition, and so Lemma~\ref{Delta 0} implies that  either $e_1 = e_2 =0$  or $f_1 = f_2=0$; hence  (1) and (7) both hold. 
In all other cases, if  $e_1f_1+e_2f_2=0$, then since by Lemma~\ref{Delta 0} the slopes of $E$ and $F$ are $ e_1/e_2$ and $f_1/f_2$, respectively, the diagonals $E$ and $F$ are orthogonal, and conversely, 
 if $E$ and $F$ are  orthogonal, then  $e_1f_1+e_2f_2 =0$  by Lemma~\ref{Delta 0}.

(2) $\Leftrightarrow$ (3) $\Leftrightarrow$ (7):
The polynomials ${\c E}, {\c F}, {\c M}, {\c N}$ are constant if and only if $e_1f_1+e_2f_2=0$.  Thus if $e_1f_1+e_2f_2=0$, the slope and aspect paths are lines. Conversely, $\E$ and $\F$ are constant if and only if
the polynomials ${\c X}_L$, $L \in \{A,B,C,D\}$, are either $0$ or homogeneous of  degree $1$. Also, $\E$ and $\F$ have degree $1$ if and only if the $\X_L$ are homogeneous of degree $2$.  If  the slope path is a line,  the polynomials $\X_L$ are linear,  
 which in turn implies 
 ${\c E}$ and $ {\c F}$ are constant. Similarly, if the aspect path is a line, then ${\c M}$ and ${\c N}$ are constant. 
 
 (2) $\Rightarrow$ (4): If the slope path is a line, then  ${\c E}$ and  ${\c F}$ must be constant, so  Theorem~\ref{big geo} implies that every rectangle on the slope path has the same aspect ratio.


(4) $\Rightarrow$ (7): 
Suppose every rectangle  on the slope path has the same aspect ratio $u/v$. If $f_1=f_2=0$, then clearly $e_1f_1+e_2f_2=0$, so suppose at least one of $f_1,f_2$ is nonzero. Suppose by way of contradiction that $e_1f_1 +e_2f_2 \ne 0$. Then 
${\c E}(1,0) = e_1$, ${\c F}(1,0)=f_2$,  ${\c E}(0,1) = e_2$ and ${\c F}(0,1)=-f_1$. 
By Theorem~\ref{big geo}, $vm_{CD}\E(\sigma) =u \F(\sigma)$ for all $\sigma \in \P^1$. Thus $vm_{CD} e_1=uf_2$ and $vm_{CD} e_2=-uf_1$, and so $vm_{CD}e_1f_1 = uf_1f_2$ and $vm_{CD}e_2f_2=-uf_1f_2$.   
Since $e_1f_1+e_2f_2 \ne 0$  and $m_C \ne m_D$, it follows that $v =0$.  But then $u \ne 0$ since $u/v \in \P^1$, and so
from the equations $vm_{CD} e_1=uf_2$ and $vm_{CD} e_2=-uf_1$ we conclude that 
 $f_1=f_2 =0$, contrary to assumption. Thus $e_1f_1+e_2f_2 =0$.

  (2) $\Leftrightarrow$  (6):  That (6) implies (2) is clear. Conversely, assume (2). We have established that (2) is equivalent to 
  (3), so both the slope path and the aspect path are lines. These two lines are contained in the set of rectangles in $\P\C$, which is defined by a planar quadric in $\P\C$.  The proof of Theorem~\ref{rectangle in} shows there is an isomorphism $\Gamma$ of varieties from $\P^2 $ onto the plane of parallelograms in $\P\C$.  There are linear polynomials $\ell_1,\ell_2 \in \k[X_A,X_B,X]$ such that
the image of  
  the slope path under $\Gamma^{-1}$ is the zero set of $\ell_1$ in ${\mathbb{P}}^2$ and the image of the  aspect path is the zero set of $\ell_2$ in ${\mathbb{P}}^2$. 
  Let $h$ be the polynomial in the proof of Theorem~\ref{rectangle in}. 
  %
 Let $\overline{\k}$ denote the algebraic closure of $\k$. By the Nullstellensatz, the fact that 
$\ell_1$ and $\ell_2$ are irreducible imply that   
  there are $f,g \in \overline{\k}[X,X_A,X_B]$ such that $h = f\ell_1 = g\ell_2$.  Since $h$ has degree $\leq 2$ and $\ell_1$ and $\ell_2$ have degree $1$, this implies that $h = \lambda \ell_1\ell_2$ for some  $\lambda \in \overline{\k}$.  
  The zero set of $h$ is the union of the zero sets of $\ell_1$ and $\ell_2$. Applying $\Gamma$,  the set of rectangles in $\P\C$ is the union of the slope path and the aspect path.

  Finally, to see that the slope path and the aspect path are not the same, use the fact that we have established already that (2) and (4) are equivalent. Thus, if the slope path is the aspect path, then every rectangle   on the aspect path has 
 the same aspect ratio, contrary to Theorem~\ref{big geo 2}.  From this we conclude that the slope path and aspect path are distinct from one another.

(3) $\Rightarrow$ (5): If the aspect path is a line, then as in the proof that (2) and (3) are equivalent,  
Theorem~\ref{big geo 2} implies that the degrees of ${\c M}$ and  ${\c N}$ are not $1$, and so Theorem~\ref{big geo 2} implies that every rectangle on the aspect path has the same slope.  
To see that this slope is the slope of $F$, 
suppose first that $A = B$.  Then $F= A=B$. 
Since $A = B$, every rectangle in $\C$ has vertices that lie on $A$ and $B$, and so the slope of each rectangle is the slope of $F$.  Now suppose that $A \ne B$.  
 Lemma~\ref{Delta 0} implies that $e_1 \ne 0$ or $e_2 \ne 0$.  
Using Notation~\ref{MN} and 
 Theorem~\ref{big geo 2}, the slope of every rectangle on the aspect path is $-e_2/e_1$, which by Lemma~\ref{Delta 0} is the slope of a line that is orthogonal to  $E$, and hence by the equivalence of (1) and (3) is the slope of $F$.

      (5) $\Rightarrow$ (7):
 The proof is similar to the proof that   (4) implies (7). Suppose every rectangle  on the aspect path has the same slope $s/t$, and suppose by way of contradiction that $e_1f_1+e_2f_2 \ne 0$. Then 
$${\c M}(1,0) =-\frac{f_1}{m_{CD}}, \: \: {\c N}(1,0)=-\frac{f_2}{m_{CD}}, \: \: {\c M}(0,1) = -e_2, \: \: {\c N}(0,1)=e_1.$$
By Theorem~\ref{big geo 2}, $t\M(\sigma) =s \N(\sigma)$ for all $\sigma \in P^1$. Thus $t f_1=sf_2$ and $-t e_2=se_1$, 
and so $te_1f_1 = -se_1f_2$ and $te_2f_2=se_1f_2$.  Since $e_1f_1+e_2f_2 \ne 0$, it follows that $t =0$.  But then $s \ne 0$ since $s/t \in \P^1$, and so $e_1=f_2 =0$, a contradiction to the assumption that $e_1f_1+e_2f_2 \ne 0$. 
Therefore, $e_1f_1+e_2f_2 = 0$.
\end{proof}

  \begin{definition} \label{deg} The configuration ${\bf C}$ is {\it degenerate} if $\C$ satisfies 
  the equivalent statements of Theorem~\ref{deg char}. 
  \end{definition}

  Figure~2 illustrates   the statements in Theorem~\ref{deg char} for  a degenerate configuration.
  If  $\k = \R$, then Theorems~\ref{planar} and~\ref{deg char} imply that
 $\C$ is degenerate if and only if the set of rectangles in $\C$ is a degenerate planar hyperbola or a line.
In this same case $\k = \R$,
 Schwartz \cite[Section 2.2]{Sch} defines  a ``perpendicularity test'' for a configuration, an expression involving the slopes and $y$-intercepts of the lines $A,B,C,D$ that is $0$ if and only if the diagonals $E$ and $F$ are perpendicular. 
 A calculation shows that the expression he gives is $0$ if and only if  $e_1f_1+e_2f_2=0$, and so our interpretation of the $e_i$ and the $f_j$ in Proposition~\ref{Delta 0} in terms of the slopes of  $E$ and $F$ gives another point of view on how his equation encodes the orthogonality of the diagonals. As with our case, Schwartz \cite[Theorem~3.3]{Sch} concludes in his setting that (4), (5) and (6) hold for a degenerate configuration.

\begin{corollary} \label{cases} 
If two lines in $\C$ are equal, $\C$ has twin pairs or $\C$ has dual pairs, then $\C$ is degenerate. 
 \end{corollary}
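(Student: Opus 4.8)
The plan is to verify, in each of the three hypothesized situations, one of the equivalent conditions of Theorem~\ref{deg char}; the conclusion then follows at once. The most economical target is condition~(7), the single identity $e_1f_1+e_2f_2=0$ (equivalently, by~(1), orthogonality of the diagonals $E$ and $F$), because it is purely algebraic and sidesteps case distinctions about when $E$ or $F$ is a line at infinity. So I would reduce the corollary to three short computations using the formulas $e_1=b_Am_B-m_A$, $e_2=b_A-1$, $f_1=b_Am_{BC}m_D+m_{DA}m_C$, $f_2=m_{DA}+b_Am_{BC}$ together with Lemma~\ref{Delta 0}.

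First, suppose $A=B$ or $A=D$ (under the standing assumptions $B\ne C$, $B\ne D$ and $C\ne D$, these are the two ways a line can lie in both defining pairs $A,C$ and $B,D$). If $A=B$, Lemma~\ref{Delta 0} gives $e_1=e_2=0$; if $A=D$, it gives $f_1=f_2=0$. Either way $e_1f_1+e_2f_2=0$, so~(7) holds.

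Next, suppose $\C$ has twin pairs. If $A\parallel D$ and $B\parallel C$, then $m_{DA}=m_{BC}=0$, so $f_1=f_2=0$ outright and~(7) is immediate (here $F$ is the line at infinity, orthogonal to $E$ by convention). If instead $A\perp C$ and $B\perp D$, then $m_Am_C=-1$ and $m_Bm_D=-1$; substituting these shows first $f_1=-(m_Cm_D+1)e_2$, so $e_1f_1+e_2f_2=e_2\bigl(f_2-(m_Cm_D+1)e_1\bigr)$, which collapses to $0$ after using $m_Bm_Cm_D=-m_C$ and $m_Am_Cm_D=-m_D$. I would also record the coordinate-free alternative, which covers all of twin pairs uniformly: by Lemma~\ref{at infinity} the degree-two curve of rectangles in $\P\C$ contains the entire line at infinity of the plane of parallelograms, so this conic is reducible; since $A\not\parallel B$ under twin pairs (otherwise $C\parallel D$), the degenerate rectangle concentrated at $A\cap B$ shows the curve is not a double line, hence is a union of two distinct lines, and the slope path---an irreducible curve inside it, nonconstant because $\pi$ separates slopes by Theorem~\ref{big geo}---must coincide with one of them, giving condition~(2).

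Finally, suppose $\C$ has dual pairs, so $m_A^2=-1$ and either $A\parallel B\parallel C$ or $A\parallel B\parallel D$. In the first subcase $m_{BC}=0$ and $e_1=m_Ae_2$, so $e_1f_1+e_2f_2=e_2(m_D-m_A)(m_A^2+1)=0$; in the second, $m_{DA}=0$ and still $e_1=m_Ae_2$, so $e_1f_1+e_2f_2=e_2b_A(m_B-m_C)(m_A^2+1)=0$. (Since $m_A^2=-1$, this case is vacuous when $\k$ is formally real, but it costs nothing to include.) The only step with any friction is the twin-pairs subcase $A\perp C$, $B\perp D$, where the vanishing of $e_1f_1+e_2f_2$ is not visible term by term and the two perpendicularity relations must be used in combination; for that reason the Lemma~\ref{at infinity} route is the one I would actually write up for twin pairs, leaving only the immediate coincident-lines case and the one-line dual-pairs identity to check by hand.
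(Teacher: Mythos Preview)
Your argument is correct and follows the paper's own approach: the paper's proof is simply ``verified by an easy calculation using the fact that $\C$ is degenerate if and only if $e_1f_1+e_2f_2=0$,'' and you have carried out that calculation case by case. Your alternative route for twin pairs via Lemma~\ref{at infinity} (the line at infinity lies in the rectangle conic, forcing it to split, and the irreducible slope path must then be one of the components) is sound and does not appear in the paper.

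One refinement worth recording: your restriction of the equal-lines case to $A=B$ or $A=D$ is not merely convenient but necessary. The within-pair coincidence $A=C$ is allowed by the standing assumptions (take $b_A=0$, $m_A=m_C$), and in that case $e_1f_1+e_2f_2=-m_{DC}(m_C^2+1)$, which is nonzero over any formally real field; geometrically $E$ and $F$ collapse to the same line through the origin and $A\cap B$, and a line is not orthogonal to itself unless its slope is self-orthogonal. So the corollary must be read with your cross-pair interpretation, which your parenthetical correctly signals.
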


\begin{proof} This is verified by an easy calculation using 
the fact that $\C$ is degenerate if and only if $e_1f_1+e_2f_2=0$.
\end{proof} 


See Figure~6 for an example in which $A= B$, an example  that covers the case of rectangles inscribed in 3 lines. 
\begin{figure}[h] \label{diagonals non}
 \begin{center}
 \includegraphics[width=0.5\textwidth,scale=.07]{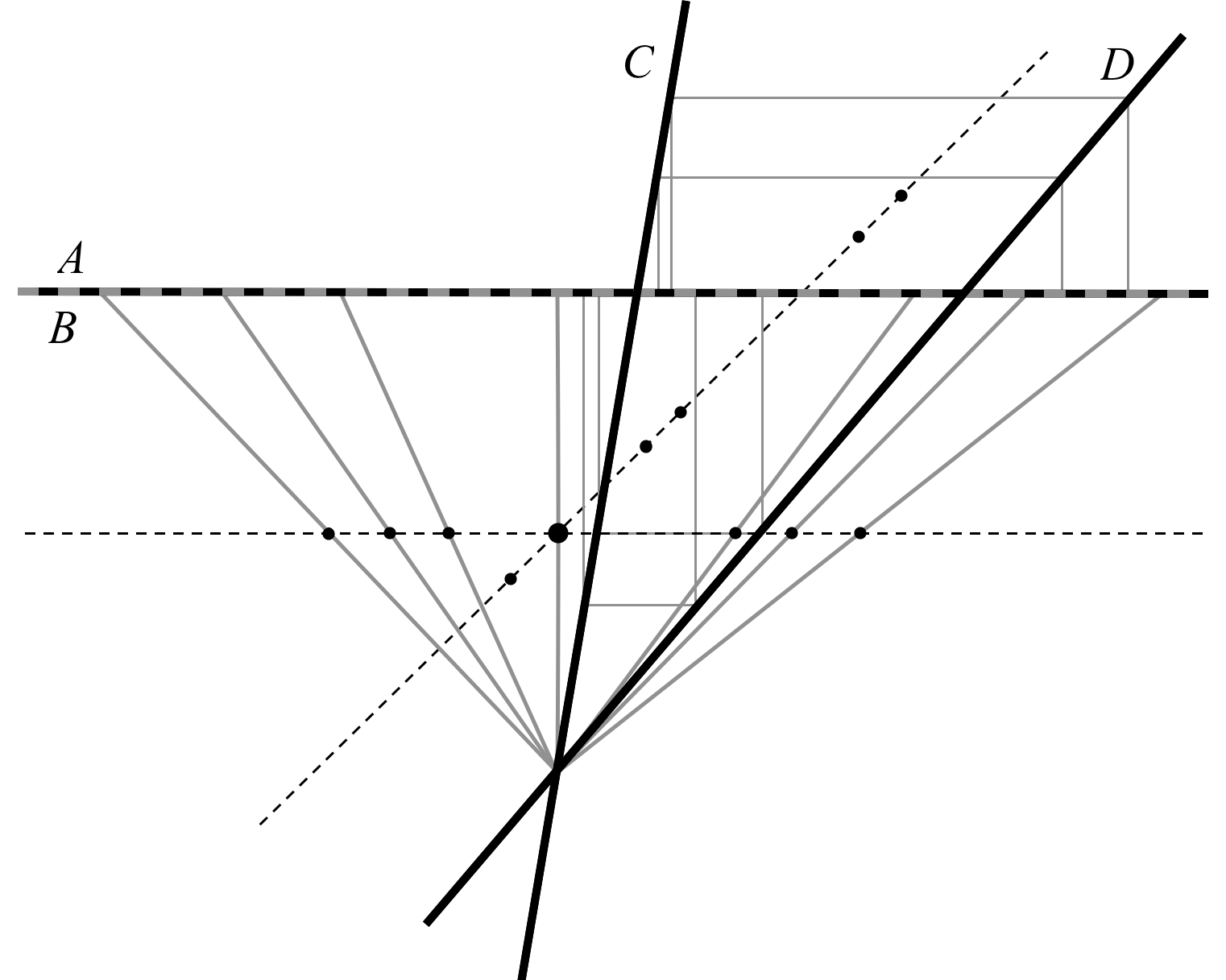} 
 \end{center}
 \caption{Rectangles inscribed in a configuration in which $A=B$. 
The set of centers of the rectangles on the aspect path 
 is the slanted dotted line. All the rectangles  on this line have the same slope but never the same aspect ratio. The set of the centers of the rectangles on the slope path is the horizontal dotted line. The rectangles along this line are all degenerate with aspect ratio $0$ but none have the same slope. The aspect path of centers goes through the midpoint of the altitude of the visible triangle and the midpoint of its base, since these are the two diagonals of the configuration. See Theorem~\ref{Newton}.
  }
\end{figure}
Since degeneracy of $\C$ is equivalent to the orthogonality of the diagonals $E$ and $F$, there are many more cases of degenerate configurations than  Corollary~\ref{cases} might suggest. The next corollary shows that it is always possible to degenerate the configuration $\C$ by  translating one line only, and that  if $\C$ does not have twin pairs, there are only two such translations that work.

\begin{corollary} Let $\k = \R$. If $\C$ does not have twin pairs, then with $m_A,m_B,m_C,m_D$   fixed, there are two choices for $b_A$ in which $\C$ is degenerate, while if $\C$ is does have twin pairs,  any choice of $b_A$ yields a degenerate configuration. 
\end{corollary}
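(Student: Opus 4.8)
By Theorem~\ref{deg char}, the configuration $\C$ is degenerate if and only if $e_1f_1+e_2f_2=0$, so the plan is to read this condition as an equation in the single unknown $b_A$, the slopes $m_A,m_B,m_C,m_D$ being held fixed. Substituting the expressions for $e_1,e_2,f_1,f_2$ from the standing assumptions, one finds
\[
g(b_A)\ :=\ e_1f_1+e_2f_2\ =\ (b_Am_B-m_A)(b_Am_{BC}m_D+m_{DA}m_C)+(b_A-1)(b_Am_{BC}+m_{DA}),
\]
a polynomial of degree at most $2$ in $b_A$ whose leading coefficient is $m_{BC}(m_Bm_D+1)$. So the statement reduces to showing that $g$ vanishes identically exactly when $\C$ has twin pairs, and that otherwise $g$ --- regarded as a binary quadratic form in $b_A$ --- has exactly two zeros in $\P^1$.

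The twin-pairs case is immediate: whether $\C$ has twin pairs is a condition on the slopes alone, so if it holds then \emph{every} translate of $A$ still yields a configuration with twin pairs, and each such configuration is degenerate by Corollary~\ref{cases}; hence $g\equiv0$. (Since $\k=\R$, the dual-pairs alternative of Definition~\ref{twin} cannot occur and need not be considered.)

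Now assume $\C$ does not have twin pairs. The key computation I would carry out is that the discriminant of $g$ with respect to $b_A$ equals the quantity
\[
\Delta\ =\ 4(m_Am_C-m_Bm_D)^2+(m_Am_Bm_{CD}+m_Cm_Dm_{AB}-m_{AB}-m_{CD})^2
\]
from the proof of Lemma~\ref{at infinity}: having read off the coefficients $a=m_{BC}(m_Bm_D+1)$, $b$, $c$ of $g$, one checks that $b^2-4ac$ simplifies to this sum of two squares (Maple is helpful here). Granting this identity, the argument in the proof of Lemma~\ref{at infinity} shows $\Delta=0$ forces twin pairs, and $\Delta$ is plainly nonnegative, so in our situation $\Delta>0$. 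Therefore $g$ is not identically zero, and as a real binary quadratic form of positive discriminant it has two distinct zeros in $\P^1(\R)=\R\cup\{\infty\}$; these two zeros are the values of $b_A$ for which $\C$ degenerates. One of them is $\infty$ precisely when the leading coefficient $m_{BC}(m_Bm_D+1)$ is $0$, i.e.\ when $B\parallel C$ or $B\perp D$, in which case there is a single genuine translate of $A$ that degenerates $\C$ and the other zero represents the limiting configuration in which $A$ recedes to infinity.

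The one real obstacle is the discriminant identity $\operatorname{disc}_{b_A}g=\Delta$; this is an honest but purely mechanical expansion, and once it is established the rest is formal, drawing only on Corollary~\ref{cases} and the sum-of-squares analysis already present in the proof of Lemma~\ref{at infinity}. The only other point needing care is the bookkeeping around the vanishing of the leading coefficient of $g$, where one must choose between the projective phrasing (two zeros in $\P^1$) and the strictly affine one.
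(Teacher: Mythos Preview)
Your proposal is correct and follows essentially the same route as the paper: both invoke Theorem~\ref{deg char}(7) to reduce degeneracy to $e_1f_1+e_2f_2=0$, regard this as a quadratic in $b_A$ with leading coefficient $m_{BC}(m_Bm_D+1)$, identify its discriminant with the quantity $\Delta$ from the proof of Lemma~\ref{at infinity}, and appeal to that lemma's analysis together with Corollary~\ref{cases} for the twin-pairs case. Your treatment is in fact slightly more careful than the paper's on one point: you explicitly discuss the possibility that the leading coefficient vanishes (so that one of the two roots lies at infinity in $\P^1$), whereas the paper simply asserts ``two choices for $b_A$'' without commenting on this edge case.
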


\begin{proof}  By Theorem~\ref{deg char}, $\C$ is degenerate if and only if $e_1f_1+e_2f_2 =0$. Equivalently, using the definition of the $e_i,f_j$, $\C$ is degenerate if and only if $b_A$ is a zero of the polynomial $$m_{BC}(m_Bm_D+1)X^2-\delta X+m_{AD}(m_Am_C+1),$$
where $\delta = (m_Am_C+1)m_{BD}+(m_Bm_D+1)m_{AC}.$
A calculation shows that the discriminant of this polynomial  is 
$$4(m_Am_C-m_Bm_D)^2+(
(m_Am_C+1)(m_B+m_D)-(m_Bm_D+1)(m_A+m_C))^2,$$ which is the discriminant $\Delta$ from the proof of Theorem~\ref{at infinity}. As in that proof, if $\k=\R$, then $\Delta=0$ if and only if $\C$ has twin pairs. As long as $\C$ does not have twin pairs, there are exactly two choices of $b_A$ that result in a degenerate configuration for $\C$.  On the other hand, if $\C$ has twin pairs,  
 Corollary~\ref{cases} implies $\C$ is degenerate. 
\end{proof}


Lemma~\ref{at infinity} shows that an entire  line of rectangles in $\P\C$ may occur at infinity. If this happens, then  $\C$ is degenerate by Theorem~\ref{deg char}. 
In Corollary~\ref{ap} we distinguish when it is the slope path vs.~the aspect path that occurs at infinity. By Theorem~\ref{deg char}(6), the slope and aspect paths cannot both be the line at infinity for the plane of parallelograms in $\P\C$. See Figure 5 for two examples in which the slope path is the line at infinity. 

\begin{corollary} \label{ap} \label{exceptional} The
slope path is the line at infinity for the plane of parallelograms in $\P\C$ if and only if $\C$ has twin pairs, and the aspect path is the line at infinity if and only if $\C$ has dual pairs. 

\end{corollary}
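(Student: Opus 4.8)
The plan is to characterize when the slope path $\pi$ (resp.\ aspect path $\phi$) has image equal to the line at infinity for the plane of parallelograms in $\P\C$ by reading off from Notation~\ref{slope notation} (resp.\ Notation~\ref{ar notation}) exactly when the last coordinate polynomial $\X(S,T)$ (resp.\ $\P(U,V)$) is identically zero. Indeed, the image of $\pi$ is the line at infinity precisely when $\X(s,t)=0$ for every $s/t\in\P^1$, i.e.\ when $\X$ is the zero polynomial in $\k[S,T]$; likewise for $\phi$ and $\P$. So the corollary reduces to two polynomial-vanishing computations, which we then match against the twin-pairs and dual-pairs conditions of Definition~\ref{twin}.

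First I would handle the slope path. By Remark~\ref{basic poly}, $\X(S,T)$ divides $\sigma(S,T)$, with equality exactly when $e_1f_1+e_2f_2\neq 0$. The remark immediately following Notation~\ref{basic poly 2} records that $\sigma$ is identically zero if and only if $\C$ has twin pairs. If $\C$ has twin pairs, then by Corollary~\ref{cases} the configuration is degenerate, so $e_1f_1+e_2f_2=0$; but then I need to check directly from Notation~\ref{basic poly nota} and Notation~\ref{slope notation} that $\X(S,T)=m_{BC}(S-m_DT)\E-(T+m_BS)\F$ vanishes — here $\E=1$ and $\F$ is the constant $f_2/e_1$ or $-f_1/e_2$, so this is a short computation using $f_1=f_2=0$ forced by the twin-pairs condition $A\parallel D, B\parallel C$, or using the $A\perp C, B\perp D$ relations. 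Conversely, if $\X\equiv 0$ then $\X$ divides $\sigma$ forces $\sigma$ to be divisible by $0$; more carefully, when $e_1f_1+e_2f_2\neq0$ we have $\X=\sigma$ up to the substitution, so $\sigma\equiv 0$ and $\C$ has twin pairs, while when $e_1f_1+e_2f_2=0$ one checks that $\X\equiv0$ forces $e_1=e_2=0$ or $f_1=f_2=0$, i.e.\ $A=B$, $A=D$, or $F$ at infinity, and one then sees that among these, $\X\equiv 0$ picks out exactly the twin-pairs subcase (with $A=B$ or $A=D$ alone the path is not the line at infinity). The aspect-path half is entirely parallel, using Remark~\ref{basic poly b} that $\P(U,V)$ divides $m_{CD}\alpha(U,V)$, the analogue of the twin-pairs remark for $\alpha$ (which is identically zero iff $\C$ has \emph{dual} pairs, by an argument like that in Lemma~\ref{at infinity}), and Corollary~\ref{cases} again to get degeneracy in the dual-pairs case.

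The main obstacle I anticipate is not the divisibility bookkeeping but the bookkeeping in the boundary cases of Notation~\ref{basic poly nota} and Notation~\ref{MN}, where $\E,\F$ (resp.\ $\M,\N$) are defined piecewise: one must verify that in the degenerate branch $e_1f_1+e_2f_2=0$, the normalization that makes $\E$ or $\M$ equal to $1$ does not accidentally make $\X$ or $\P$ vanish for the \emph{wrong} configurations (e.g.\ $A=B$ with $\C$ not having twin pairs). The cleanest route is probably to observe that when $\C$ is degenerate the slope path is a \emph{line} by Theorem~\ref{deg char}, and then to pin down \emph{which} line by evaluating $\X$ at one or two convenient points of $\P^1$ (say $[1:0]$ and $[0:1]$) using the explicit constants from Notation~\ref{basic poly nota}; this line is the line at infinity iff $\X$ vanishes at those two points, and a direct substitution of the twin-pairs relations into these values closes the argument. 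The analogous evaluation of $\P$ at $[1:0],[0:1]$ using Notation~\ref{MN} handles dual pairs for the aspect path. This keeps the computation to a handful of linear evaluations rather than a full expansion of the quadratic $\sigma$ or $\alpha$.
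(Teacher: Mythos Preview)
Your overall framework---reduce each half to showing that the last coordinate polynomial $\X(S,T)$, respectively ${\c P}(U,V)$, vanishes identically, and tie this to $\sigma\equiv 0$ (twin pairs) or $\alpha\equiv 0$ (dual pairs)---is sound and is essentially how the paper begins each direction. But your converse argument for the slope path contains a false step.

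You assert that, in the degenerate branch $e_1f_1+e_2f_2=0$, ``one checks that $\X\equiv 0$ forces $e_1=e_2=0$ or $f_1=f_2=0$.'' This is not true. Take the orthogonal twin-pairs case $A\perp C$, $B\perp D$: for instance $m_A=1$, $m_C=-1$, $m_B=2$, $m_D=-\tfrac12$, with any $b_A$. Then $e_1=2b_A-1$, $e_2=b_A-1$, $f_1=\tfrac32(1-b_A)$, $f_2=3(b_A-\tfrac12)$, so neither pair vanishes, yet $e_1f_1+e_2f_2=0$ and a direct check gives $\E=1$, $\F=\tfrac32$, and $\X(S,T)=3(S+\tfrac12 T)-\tfrac32(T+2S)=0$. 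So your proposed case split misses the orthogonal twin-pairs configurations entirely.

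The repair is much simpler than your case analysis. Remark~5.3 says $\X$ \emph{divides} $\sigma$ in $\k[S,T]$; unwinding Notation~5.1 in the degenerate branch, one has $\sigma=\E^*\cdot\X$ (and $\X\not\equiv 0$ when $e_1=e_2=0$). Hence $\X\equiv 0$ forces $\sigma\equiv 0$, and the remark after Notation~4.1 gives twin pairs. The same device, via Remark~6.3 and your (correct) observation that $\alpha\equiv 0$ iff $\C$ has dual pairs, handles the aspect path. So your divisibility idea already closes the converse, without the erroneous reduction to $e_i=0$ or $f_j=0$.

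For the forward direction (twin pairs $\Rightarrow$ slope path at infinity) your plan to compute $\X\equiv 0$ directly does work, but compare it with the paper's argument, which is shorter and avoids splitting the twin-pairs definition into its two subcases: twin pairs $\Rightarrow$ $\C$ degenerate $\Rightarrow$ both paths are lines and, by Lemma~3.5, one of them is the line at infinity; since every slope occurs at infinity (Theorem~4.3, because $\sigma\equiv 0$) while the aspect path carries a single slope (Theorem~7.1(5)), the aspect path cannot be the line at infinity, so the slope path is. The dual-pairs direction is handled symmetrically. Your direct computation is a legitimate alternative; the paper's route trades computation for an appeal to the structural theorems already in hand.
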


\begin{proof} Suppose that the slope path is the line at infinity. 
By  Lemma~\ref{at infinity},  either $\C$ has twin pairs or dual pairs. Since the slope path is the line at infinity, $\X(S,T) =0$, and so by Remark~\ref{basic poly}, $\sigma(S,T) = 0$. In this case, $m_Am_C = m_Bm_D$, 
so if 
$\C$ has dual pairs, then $m_A \ne 0$ and $m_A=m_B$, so that $m_C = m_D$, contrary to our standing assumption on $C$ and $D$. Therefore, $\C$ has twin pairs. 

 Conversely, if $\C$ has twin pairs, then $\C$ is degenerate by Corollary~\ref{cases} and so 
 the slope and aspect paths are lines by Theorem~\ref{deg char}. By Lemma~\ref{at infinity}, one of these lines is the line at infinity for the plane of parallelograms at infinity. Since $\C$ has twin pairs, examination of the coefficients of the polynomial  $\sigma(S,T)$ shows that $\sigma(S,T)=0$, and so every element of $\P^1$ occurs as the slope of a rectangle at infinity by 
Theorem~\ref{three}.  
 By Theorem~\ref{deg char}, every rectangle on the aspect path has the same slope, so the aspect path is not the line at infinity. Therefore, the slope path is the line at infinity. 
 
 Similarly, if the aspect path is the line at infinity, then ${\c P}(S,T)=0$, and Remark~\ref{basic poly b} implies that   $m_{BC}m_{AD}=0=m_{AB}m_{CD}$. Thus one of the following sets consists of parallel lines: $\{A,C,D\},$ $\{A,B,D\},$ $\{B,C,D\},$ $\{A,B,C\}.$ 
Since   $C$ is not parallel to $D$,  we have either that $m_A=m_B=m_D$ or $m_A=m_B=m_C$. By Lemma~\ref{at infinity}, $\C$ has twin pairs or dual pairs. Since $m_C \ne m_D$, we cannot have that $\C$ has twin pairs along with three slopes being equal, so we conclude $\C$ has dual pairs. 

Conversely, if $\C$ has dual pairs, then $\C$ is degenerate  by Corollary~ref{cases}. A calculation shows  that the coefficients of  $\alpha(U,V)$ are $0$.  By Theorem~\ref{ar at infinity thm}, every element in $\P^1$ occurs as the aspect ratio of a rectangle at infinity, so, since by Theorem~\ref{deg char} every rectangle on the aspect path has the same aspect ratio,  the line at infinity is not the slope path. Therefore, the aspect path is the line at infinity. 
\end{proof}

 Thus if $\k$ is a formally real field, the aspect path is never the line at infinity for the plane of parallelograms, and so there are at most two rectangles at infinity on the aspect path.


 \section{Non-degenerate configuration} 
 
Theorem~\ref{deg char}(7) shows that  a generic choice of the configuration $\C$---that is, a choice of $m_A,m_B,m_C,m_D,b_A$ for which $e_1f_1+e_2f_2 \ne 0$---results in a non-degenerate configuration, and so  the non-degenerate configurations present a  more typical situation. Because of this, it is worthwhile to describe the behavior of the slope and aspect paths in this case too.  
 Negations of the statements in Theorem~\ref{deg char} give useful characterizations of the non-degenerate case, but some of the ideas can be pushed a little farther to obtain stronger statements. We do this in the next theorem.

\begin{theorem} \label{deg cor} The following are equivalent.
\begin{enumerate}
\item ${\bf C}$ is non-degenerate. 
\item Every rectangle in $\C$ lies on the slope path. 
\item Every rectangle in $\C$ lies on the aspect path. 
\item The  slope path is  the  aspect path. 
\item No two rectangles on the aspect path have the same  slope. 
\item No two rectangles on the slope path have the same aspect ratio.
\item No two  rectangles  in $\P\C$  have the  same slope. 
\item No two  rectangles in $\P\C$ have the same aspect ratio. 



\end{enumerate}
\end{theorem}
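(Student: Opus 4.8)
The plan is to prove the cyclic implications $(1)\Rightarrow(4)\Rightarrow(2)\Rightarrow(1)$ and $(4)\Rightarrow(3)\Rightarrow(1)$ for the "paths coincide" part, then deduce the "no repeated slope/aspect" statements as consequences. The starting observation is that, by Theorem~\ref{deg char}(7), non-degeneracy is exactly $e_1f_1+e_2f_2\ne 0$, which is precisely the case in which $\E,\F,\M,\N$ are all honest degree-one forms (second bullet of Notations~\ref{basic poly nota} and~\ref{MN}). So under (1) both $\pi$ and $\phi$ are regular maps $\P^1\to\P\C$ whose coordinate polynomials are genuinely degree $2$, and their images are the degree-$2$ curve of rectangles of Theorem~\ref{rectangle in}. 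Since that curve is irreducible when $\C$ is non-degenerate — this is the content of the equivalence $(1)\Leftrightarrow\neg(6)$ in Theorem~\ref{deg char}, i.e. the curve is not a union of two lines — both $\pi$ and $\phi$ must parameterize this same irreducible conic, giving (4), and simultaneously every rectangle in $\C$ (a point of the affine part of this conic) lies on each path, giving (2) and (3).

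For the converses, $(2)\Rightarrow(1)$ and $(3)\Rightarrow(1)$: if $\C$ were degenerate, Theorem~\ref{deg char}(6) exhibits the curve of rectangles as a union of the (distinct) slope and aspect lines, so a rectangle in $\C$ lying on the aspect line need not lie on the slope line, and conversely; one just needs to verify that in the degenerate case there genuinely is a rectangle in $\C$ off each path, which follows because each path, being a single line, meets the affine plane of $\P\C$ in at most one further constraint while the full rectangle locus in $\C$ is two lines' worth — concretely, the rectangle of slope $F$ produced by the aspect path (Theorem~\ref{deg char}(5)) is generally not on the slope path. The implication $(4)\Rightarrow(2)$ is immediate from Theorem~\ref{big geo}, and $(1)\Leftrightarrow(4)$ then rounds out the first block, using also that by Theorem~\ref{deg char} the paths are distinct lines in the degenerate case.

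For the remaining equivalences $(5),(6),(7),(8)$: Theorem~\ref{big geo} attaches to each $s/t\in\P^1$ a rectangle of slope $s/t$, so the slope map "rectangle $\mapsto$ slope" restricted to the slope path is surjective onto $\P^1$; if $\C$ is non-degenerate the slope path is the entire (irreducible) rectangle curve, a conic, and a conic maps to $\P^1$ by a degree-one (hence injective) projection exactly when no fibre is positive-dimensional — more elementarily, Theorem~\ref{big geo} shows $\pi$ is a bijection $\P^1\to$(slope path) since distinct slopes give distinct rectangles and every rectangle is hit, so $(1)\Rightarrow(7)$, and symmetrically $(1)\Rightarrow(8)$ via Theorem~\ref{big geo 2} and the aspect map $\phi$. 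The restricted versions $(5),(6)$ are weaker than $(7),(8)$ and follow a fortiori, while their converses $(5)\Rightarrow(1)$ and $(6)\Rightarrow(1)$ are the contrapositive of Theorem~\ref{deg char}$(4)\Leftrightarrow(5)$: degeneracy forces a whole line of rectangles (on the slope path) to share one aspect ratio, and a whole line (on the aspect path) to share one slope. Finally $(7)\Rightarrow(5)$ and $(8)\Rightarrow(6)$ are trivial since the paths are subsets of $\P\C$.

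The main obstacle I expect is the careful bookkeeping at the boundary between the "paths are conics" world (non-degenerate) and the "paths are lines" world: one must make sure that in proving, say, $(2)\Rightarrow(1)$ or $(6)\Rightarrow(1)$, the rectangle witnessing failure actually lies in the \emph{affine} configuration $\C$ and not merely at infinity, since Corollary~\ref{ap} warns that in the twin-pairs subcase an entire path sits at infinity. Handling that degenerate subcase — where the slope path (or aspect path) is the line at infinity and one must locate an affine rectangle realizing the repeated slope or aspect — is where the argument needs the most attention, and it is cleanest to dispatch it by invoking Corollary~\ref{cases} and the explicit degenerate-case geometry (Theorem~\ref{deg char}(4),(5)) rather than by re-deriving the rectangle locus from scratch.
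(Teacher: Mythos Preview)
Your plan is broadly aligned with the paper's, but two points deserve comment.

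First, a small gap in $(1)\Rightarrow(4)$: you justify irreducibility of the rectangle curve by citing the negation of Theorem~\ref{deg char}(6), but $\neg(6)$ only says the curve is not the union of the slope and aspect lines, not that it fails to be a union of two lines altogether. The argument you actually need (and nearly give) is that the slope path is the image of $\P^1$, hence irreducible, and by $\neg$Theorem~\ref{deg char}(2) it is not a line; an irreducible non-linear curve inside a degree-$2$ plane curve must be all of it. The paper handles this differently: it passes to defining polynomials over $\overline{\k}$, invokes the Nullstellensatz to get $h$ in the radical of the ideal generated by the slope-path polynomial $g$, and cites a result of Walker that the degree of a parameterized curve equals the maximal degree of its coordinate polynomials (forcing $\deg g=2$). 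It then treats the irreducible and reducible cases of $g$ on equal footing, concluding $V(h)=V(g)$ either way.

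Second, and more substantively, your route for the converses $(2)\Rightarrow(1)$ and $(3)\Rightarrow(1)$ is different from the paper's and harder. You propose the contrapositive: in the degenerate case, exhibit an affine rectangle off the given path. As you correctly anticipate, this runs into genuine difficulty in the twin-pairs and dual-pairs subcases (Corollary~\ref{ap}), where one entire path is the line at infinity and the required witness may not exist in $\C$. The paper bypasses this entirely by routing $(2)\Rightarrow(5)\Rightarrow(1)$ and $(3)\Rightarrow(6)\Rightarrow(1)$: the steps $(2)\Rightarrow(5)$ and $(3)\Rightarrow(6)$ are dispatched as one-line appeals to Theorems~\ref{big geo} and~\ref{big geo 2} (distinct slopes along the slope path, distinct aspect ratios along the aspect path), and then $(5)\Rightarrow(1)$, $(6)\Rightarrow(1)$ are immediate contrapositives of Theorem~\ref{deg char}(4),(5). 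This route never requires producing an affine witness, so the infinity subcases you flag simply do not arise. Your handling of $(5)$--$(8)$ is otherwise the same as the paper's.
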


\begin{proof}
To see that (1) implies (2), suppose that ${\bf C}$ is non-degenerate.
We use the isomorphism $\Gamma$ from the proof of Theorem~\ref{deg char} to work in $\P^2$ instead of $\P\C$.
 By Theorem~\ref{deg char} the slope path is not  a line,
Let $\overline{\k}$ be the algebraic closure of $\k$, let  $g(X,X_A,X_B)$ be the defining equation for the image of the slope path in $\P^2$ under $\Gamma^{-1}$,  and let $h(X,X_A,X_B)$ be the defining equation for the image in $\P^2$ of the set of rectangles in $\P\C$ as in Theorem~\ref{rectangle in}.  Since the image of the slope path lies in this set, the Nullstellensatz implies that $h$ is in the radical of the ideal generated by  $g(X,X_A,X_B)$ in the ring $\overline{\k}[X,X_A,X_B]$.  
Since a projective  rational plane curve that is parameterized by   polynomials has order equal to the highest degree of these polynomials \cite[Exercise~3, p.~151]{Wal}, this implies that $g$ has degree $2$. 
 If $g$ is irreducible in this ring, then since $h$ has degree at most $2$,  $h = \lambda g$ for some $\lambda \in \overline{k}$. In this case, the slope path is the set of rectangles in $\P\C$, from which (2) follows. Otherwise, if $g$ is not irreducible, then $g$ is a product of two linear homogeneous polynomials $\ell_1,\ell_2$ in 
$\overline{\k}[X,X_A,X_B]$. It follows that $h = \mu \ell_1\ell_2=\mu g$ for some $\mu \in \overline{k}$. Thus $h$ and $g$ have the same zeroes in ${\mathbb{P}}^2$, which  proves that the slope path is the set of rectangles in $\P\C$. For the proof that (1) implies (3), apply the same argument to the aspect path. It follows from this also that (1) implies (4).

That (2) implies (5)  follows from the fact that every rectangle  on the slope path has a different slope by Theorem~\ref{big geo}. Similarly, (3) implies (6) 
  by Theorem~\ref{big geo 2}. Also, that (4) implies (5) follows from Theorem~\ref{big geo}. 
That (5)   and (6) each imply (1) follows from Theorem~\ref{deg char}. 
This proves that (1)--(6) are equivalent.
That (4)  implies (7) and (8) follows from the already established fact that (4) implies (5) and (6). 
Also,  (7)    and (8)  each imply (1)  by  Theorem~\ref{deg char}, so (1)--(8) are equivalent. 
%
%
%
%
\end{proof}

Figure 1 illustrates Theorem~\ref{deg cor}. 
With the theorem, we can show finally  that the slope and aspect paths find all rectangles in $\P\C$.

\begin{corollary} \label{union} Every rectangle in $\P\C$ lies on the slope path or the aspect path. 
\end{corollary}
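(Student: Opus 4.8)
The plan is to split into the degenerate and non-degenerate cases, using Theorem~\ref{deg char} and Theorem~\ref{deg cor} as the two engines. First I would observe that by Theorem~\ref{rectangle in} the set of rectangles in $\P\C$ is a plane curve of degree $2$, so it suffices to account for every point on this curve.

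In the non-degenerate case, Theorem~\ref{deg cor} gives the conclusion immediately: statement (2) of that theorem says every rectangle in $\C$ lies on the slope path, and more is true from its proof—the slope path (equivalently, by (4), the aspect path) \emph{equals} the full degree-$2$ curve of rectangles in $\P\C$, including the rectangles at infinity. So in this case every rectangle in $\P\C$ lies on (either) path. In the degenerate case, Theorem~\ref{deg char}(6) is exactly what is needed: the set of rectangles in $\P\C$ is the union of two distinct lines, one of which is the slope path and the other the aspect path. Hence every rectangle in $\P\C$ lies on one of the two paths.

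So the proof is essentially a two-line case split: \emph{If $\C$ is non-degenerate, apply Theorem~\ref{deg cor}(1)$\Rightarrow$(2) (and its proof, which identifies the slope path with the whole curve of rectangles); if $\C$ is degenerate, apply Theorem~\ref{deg char}(1)$\Rightarrow$(6).} The one point worth stating carefully is that in the non-degenerate case we need the slope path to contain the rectangles \emph{at infinity}, not merely the rectangles in the affine configuration $\C$; this is why I would cite the proof of Theorem~\ref{deg cor} rather than only its statement (2), since that proof shows $h=\lambda g$ up to scalar in $\overline{\k}[X,X_A,X_B]$ and hence the zero sets—and thus the curves in $\P\C$—coincide exactly.

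There is no real obstacle here: the corollary is a packaging of results already proved. The only thing to be careful about is not to over-claim—the statement is a disjunction ("slope path \emph{or} aspect path"), and in the non-degenerate case both paths coincide and equal the whole curve, while in the degenerate case the two paths are genuinely different lines whose union is the (degenerate) conic; either way the disjunction holds. I would write the proof as: "By Theorem~\ref{rectangle in} the rectangles in $\P\C$ form a curve of degree $2$. If $\C$ is degenerate, Theorem~\ref{deg char} shows this curve is the union of the slope path and the aspect path. If $\C$ is not degenerate, the proof of Theorem~\ref{deg cor} shows the slope path coincides with this entire curve. In either case the claim follows."
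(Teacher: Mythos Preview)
Your proposal is correct and follows exactly the paper's own two-case split: Theorem~\ref{deg char}(6) for the degenerate case and Theorem~\ref{deg cor} for the non-degenerate case. Your extra care in noting that statement~(2) of Theorem~\ref{deg cor} literally refers to $\C$ rather than $\P\C$, and that one must appeal to its proof (where the slope path is shown to equal the full curve of rectangles in $\P\C$) to cover the rectangles at infinity, is a worthwhile clarification that the paper's one-line citation glosses over.
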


\begin{proof} If ${\bf C}$ is degenerate,  this follows from Theorem~\ref{deg char}(6), while if ${\bf C}$ is not degenerate,  this follows from Theorem~\ref{deg cor}.
\end{proof}

We show next that if  $\C$ is non-degenerate, 
 the slope and aspect paths are computable from each  other by factoring through a homography, and so 
 the aspect ratio of a rectangle  in $\P\C$ depends entirely on its slope, and vice versa,  slope is determined by aspect ratio. Theorem~\ref{deg char} shows this is only true for the non-degenerate case.

\begin{theorem} \label{homography} If $\C$ is non-degenerate, then 
  the slope path is   the aspect path composed with a homography $\Psi:\P^1\rightarrow \P^1$. The aspect path is the slope path composed with $\Psi^{-1}$.  
  \end{theorem}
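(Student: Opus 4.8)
The plan is to produce the homography $\Psi$ explicitly by matching slope to aspect ratio on the two paths and then checking that this correspondence is given by a linear fractional map with invertible coefficient matrix. Theorem~\ref{big geo} says that the rectangle $\pi(s/t)$ on the slope path has aspect ratio $m_{CD}\E(s,t)/\F(s,t)$; Theorem~\ref{big geo 2} says that the rectangle $\phi(u/v)$ on the aspect path has slope $\M(u,v)/\N(u,v)$. Since $\C$ is non-degenerate, the equivalence (1)$\Leftrightarrow$(7) in Theorem~\ref{deg char} gives $e_1f_1+e_2f_2\ne 0$, so we are in the middle case of Notation~\ref{basic poly nota} and Notation~\ref{MN}: $\E(S,T)=e_1S+e_2T$, $\F(S,T)=f_2S-f_1T$, $\M(U,V)=\frac{f_1}{m_{CD}}U+e_2V$, $\N(U,V)=\frac{f_2}{m_{CD}}U-e_1V$. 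First I would define $\Psi:\P^1\to\P^1$ by $\Psi(s/t)=\big(m_{CD}\E(s,t)\big)/\F(s,t)$, i.e. the map sending a slope to "its" aspect ratio, which as a map on $\P^1$ is given by the matrix $\left[\begin{smallmatrix} m_{CD}e_1 & m_{CD}e_2 \\ f_2 & -f_1 \end{smallmatrix}\right]$, whose determinant $-m_{CD}(e_1f_1+e_2f_2)$ is nonzero by non-degeneracy ($m_C\ne m_D$ by the standing assumptions). So $\Psi$ is a genuine homography.

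The core of the argument is then to verify $\pi = \phi\circ\Psi$ as maps $\P^1\to\P\C$. Both sides are regular maps to $\P\C$ landing in the curve of rectangles. By Theorem~\ref{big geo}, $\pi(s/t)$ is the unique rectangle in $\P\C$ with slope $s/t$ (uniqueness holds since $\C$ is non-degenerate, by Theorem~\ref{deg cor}(7)); by Theorem~\ref{big geo 2}, $\phi(\Psi(s/t))$ is a rectangle in $\P\C$ with slope $\M\big(m_{CD}\E(s,t),\F(s,t)\big)/\N\big(m_{CD}\E(s,t),\F(s,t)\big)$. So it suffices to check that this slope equals $s/t$. Substituting the explicit forms, $\M\big(m_{CD}\E,\F\big)=f_1\E+e_2\F$ and $\N\big(m_{CD}\E,\F\big)=f_2\E-e_1\F$; expanding $\E=e_1s+e_2t$, $\F=f_2s-f_1t$ and collecting, one finds $f_1\E+e_2\F=(e_1f_1+e_2f_2)s$ and $f_2\E-e_1\F=(e_1f_1+e_2f_2)t$, so the slope is exactly $(e_1f_1+e_2f_2)s/(e_1f_1+e_2f_2)t = s/t$ since $e_1f_1+e_2f_2\ne0$. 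By uniqueness of the rectangle of a given slope in $\P\C$ (Theorem~\ref{deg cor}), $\pi(s/t)=\phi(\Psi(s/t))$ for all $s/t\in\P^1$. For the reverse statement, $\Psi$ is invertible as a homography, and running the symmetric computation — i.e.\ checking that $\M$ and $\N$ composed back through $\Psi^{-1}$ return the aspect ratio, or simply composing the already-established identity $\pi=\phi\circ\Psi$ with $\Psi^{-1}$ and invoking uniqueness of the rectangle of a given aspect ratio (Theorem~\ref{deg cor}(8)) — gives $\phi = \pi\circ\Psi^{-1}$.

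The step I expect to be the main obstacle — or at least the one requiring the most care — is confirming the algebraic identities $f_1\E+e_2\F=(e_1f_1+e_2f_2)S$ and $f_2\E-e_1\F=(e_1f_1+e_2f_2)T$ cleanly, and more importantly making sure the argument is watertight across the boundary cases hidden in Notation~\ref{basic poly nota} and Notation~\ref{MN}. Since we have already reduced to $e_1f_1+e_2f_2\ne0$, we are always in the single uniform middle case of both notations, so those piecewise definitions collapse and no case analysis on $A=B$, $A=D$, or $F$ at infinity is needed; this is worth stating explicitly. A secondary point to handle is that $\Psi$, $\pi$, and $\phi$ are compared as maps into $\P\C\subseteq\P^8$, so rather than matching all nine homogeneous coordinates it is cleanest to invoke the uniqueness statements from Theorem~\ref{deg cor} — a non-degenerate configuration has at most one rectangle of each slope and each aspect ratio — which reduces the verification to the one-variable slope identity above. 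Once that identity is in hand the proof is short.
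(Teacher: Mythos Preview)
Your proof is correct and follows essentially the same approach as the paper: both define $\Psi(s/t)=m_{CD}\E(s,t)/\F(s,t)$, use non-degeneracy to reduce to the uniform linear case of Notations~\ref{basic poly nota} and~\ref{MN}, verify that $\phi(\Psi(s/t))$ has slope $s/t$ via the identity $\M(m_{CD}\E,\F)/\N(m_{CD}\E,\F)=s/t$, and then invoke the uniqueness of rectangles of a given slope from Theorem~\ref{deg cor} to conclude $\pi=\phi\circ\Psi$. The only cosmetic difference is that the paper names the inverse map $\Phi(u/v)=\M(u,v)/\N(u,v)$ and asserts $\Phi=\Psi^{-1}$ by ``a simple calculation,'' whereas you carry out that calculation explicitly.
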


\begin{proof} 
 First we show there  is a homography $\Psi:\P^1\rightarrow \P^1$ such that  a rectangle in $\P\C$ has slope $s/t \in \P^1$ if and only if its aspect ratio is $\Psi(s/t)$. 
  Define 
 $\Psi,\Phi:{\mathbb{P}}^1 \rightarrow {\mathbb{P}}^1$ 
 for all $a/b \in P^1$ by 
 $\Psi(a/b) = m_{CD}\E(a,b)/\F(a,b)$ and $\Phi(a/b) = \M(a,b)/\N(a,b).$ 
 Theorem~\ref{deg char} implies that $\C$ is non-degenerate if and only if $e_1f_1+ e_2f_2 \ne 0$, and so $\E(S,T) = e_1S+e_2T$, $\F(S,T)=f_2S-f_1T$,  $  \M(U,V) = \frac{f_1}{m_{CD}}U+e_2V$ and $\N(U,V) = \frac{f_2}{m_{CD}}U- e_1V$. 
A simple calculation shows that $\Phi$ is the inverse of $\Psi$.   
   By Theorem~\ref{big geo}, for each $a/b \in \P^1$, $\Psi(a/b)$ is the aspect ratio of the rectangle on the slope path that has slope $a/b$, and by Theorem~\ref{big geo 2}, $\Phi(a/b)$ is the slope of the rectangle on the aspect path that has aspect ratio $a/b$.

Let $\pi,\phi:\P^1 \rightarrow \P\C$ denote the slope and aspect  paths from Theorems~\ref{big geo} and~\ref{big geo 2}, and let $\Psi$ be the homography from Theorem~\ref{homography}. 
Let $s/t \in \P^1$. By Theorem~\ref{big geo 2} and the fact  that $\Psi^{-1} = \Phi$, we have that  $\phi(\Psi(s/t))$ is a rectangle with slope $\Psi^{-1}(\Psi(s/t)) = s/t$.  By Theorem~\ref{big geo}, $\pi(s/t)$ also is a rectangle in $\P\C$ with slope $s/t$, and so  $\pi(s/t) = \phi(\Psi(s/t))$ by Theorem~\ref{deg cor}.  This shows that 
 $\pi = \phi \circ \Psi$. Since $\Psi$ is a homography, 
  $\phi = \pi \circ \Psi^{-1}$.  
\end{proof}

\section{Rectangle locus}

In this section, because we   work with midpoints, we assume $\k$ is a field of characteristic other than $2$. 
 It is sometimes useful to have a  direct way of representing  rectangles in $\C$ with a  point in $\k^2$ rather than $\k^8$. In \cite{OW} and \cite{Sch}, this is done for the case $\k = \R$ using the {\it rectangle locus} of the configuration, the set of centers of the rectangles in $\C$. If neither of the pairs of lines in the configuration consists of parallel lines, then each point on the rectangle locus uniquely determines a rectangle inscribed in sequence in the lines $A,B,C,D$, and so the rectangle locus gives a good way to track the path of inscribed rectangles through  the configuration.
  It is shown in \cite{OW} and \cite{Sch} that in the case in which $\k = {\mathbb{R}}$ and  neither pair $A,C$ nor $B,D$ consists of parallel or perpendicular lines, the rectangle locus is a hyperbola. 
  See Figures~1 and~2 for examples of this.  
  However, if at least 
 one of the pairs   
consists of parallel lines or both pairs consist of perpendicular lines, then the locus can be a line, a line with a segment missing, or a point; see \cite[Section 4]{OW}.  Figures 7 and~8  illustrate two such cases.



\begin{figure}[h] \label{diagonals non}
 \begin{center}
 \includegraphics[width=0.65\textwidth,scale=.08]{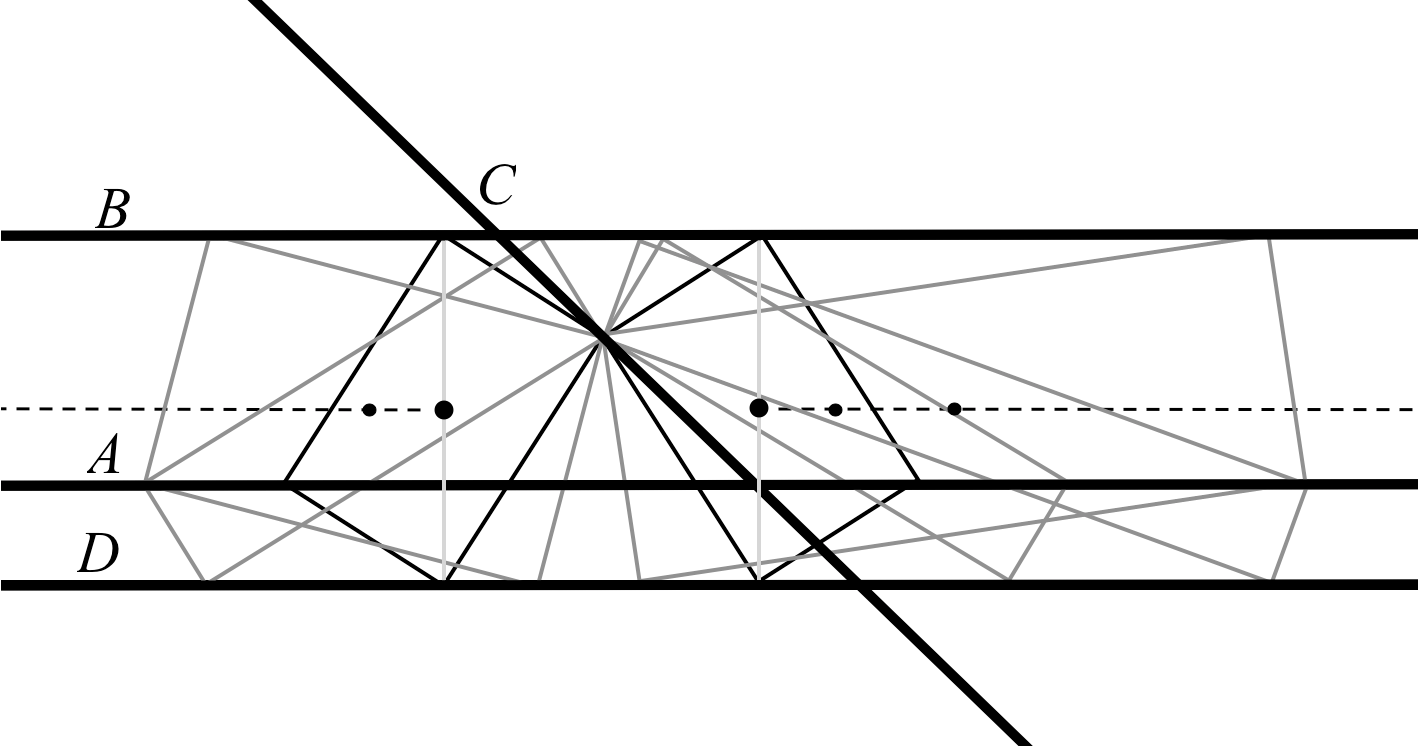} 
 \end{center}
 \caption{Rectangles inscribed in a configuration in which $A, B$ and $D$ are parallel. The rectangle locus, the dotted line in the figure,  is a line with a gap. 
 The configuration is non-degenerate, and so no two rectangles have the same slope or the same aspect ratio (see Theorem~\ref{deg char}). Each point on the locus except the endpoints is the center of two inscribed rectangles. At the endpoints, there is only one inscribed rectangle. This behavior is explained by the fact that the locus is the image of a hyperbola under an affine transformation. 
  }
\end{figure}

\begin{figure}[h] \label{diagonals non}
 \begin{center}
 \includegraphics[width=0.45\textwidth,scale=.07]{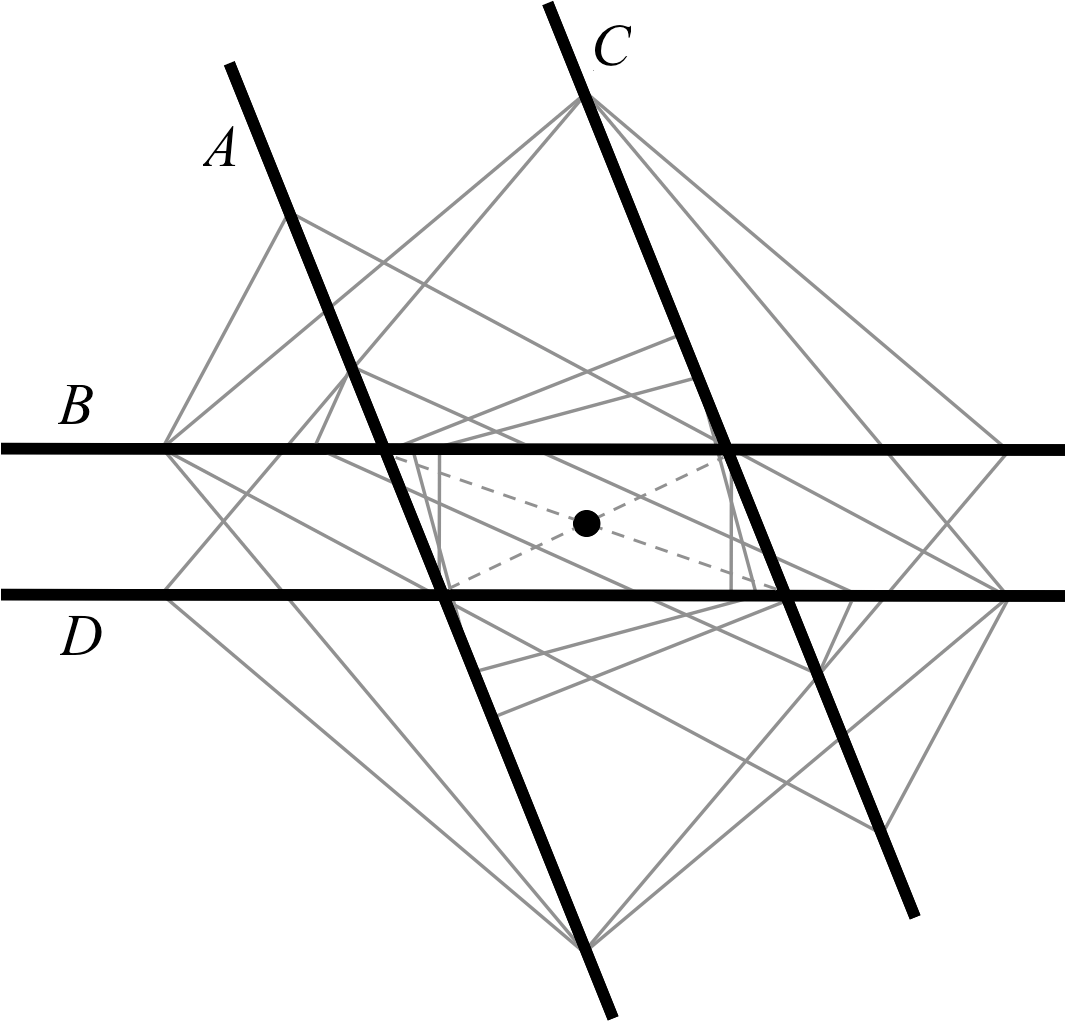} 
 \end{center}
 \caption{Rectangles inscribed in a configuration for which $A \parallel C$ and $B \parallel D$. This configuration is nondegenerate because the diagonals $E$ and $F$ are not orthogonal; see Theorem~\ref{deg char}.  All rectangles share the same center, and so the rectangle locus is a single point. This point is the image of the planar hyperbola of rectangles in $\C$ under a linear transformation. 
  }
\end{figure}

Theorem~\ref{planar} helps explain these observations. This is because 
there is a linear transformation from $\C$ to $\k^2$ that sends a rectangle to its center, and 
 the rectangle locus is the image under this transformation of either a line or a plane curve of degree $2$.  
If $\k = \R$, then by Theorem~\ref{planar} the rectangle locus is the image of either a line or a planar hyperbola, and  if no pairs among $A,B,C,D$ are   parallel or orthogonal, 
 then 
the rectangle locus is a hyperbola. 
(With the assumption of no parallel lines, it is straightforward to see that this linear transformation restricts to a bijection from the set of rectangles in $\C$ to the rectangle locus for $\C$.)


%


For the rest of the section we work under the assumption that none of $A,B,C,D$ are parallel, and we interpret the rectangle locus and rectangles at infinity for the case in which $\C$ is degenerate. 
With $\C$ degenerate, the rectangle locus is by Theorem~\ref{deg char} a pair of lines, one the image of the slope path and the other the image of the aspect path. We say that the image of the slope path is the {\it slope path of centers for $\C$} and the image of the aspect path is the {\it aspect path of centers for $\C$}. The slope path of centers is the image of 
the 
 linear transformation from $\k^8$ to $\k^2$ that sends a rectangle to its center composed with the slope path, and so it sends $s/t \in \P^1$ to the center of a rectangle in $\C$ with slope $s/t$.  A similar statement holds for the aspect path of centers.

 Let $p_{AB}$ denote the intersection of the lines $A$ and $B$, $p_{BC}$ the intersection of $B$ and $C$, etc.. If $\k = \R$, then the midpoints of the  traditional diagonals of the complete quadrilateral formed by $A,B,C,D$ lie on a line, the Gauss-Newton line. It is  straightforward to see that this remains true for any choice of field $\k$ with characteristic other than $2$; i.e., the following points are collinear: $$\frac{1}{2}\left(p_{AB}+p_{CD}\right), \frac{1}{2}\left(p_{AD}+p_{BC}\right), \frac{1}{2}\left(p_{AC}+p_{BD}\right).$$ We refer to the line through these three points as the Gauss-Newton line. Also, 
 for the purpose of stating the next  theorem, we denote by $G$ the third diagonal of the configuration $\C$, that is, the line through the points $p_{AC}$ and $p_{BD}$.   (Recall that $E$ is the line through $p_{AB}$ and $p_{CD}$ and $F$ is the line through $p_{AD}$ and $p_{BC}$.)

\begin{theorem} \label{Newton} Suppose  $\C$ is a degenerate configuration in which no lines are parallel. Then  the aspect  path of centers   is the Gauss-Newton line. If 
at least one of the pairs $A,C$ or $B,D$ is not orthogonal,  the slope path of centers is a line that is parallel to  the diagonal $G$.  
\end{theorem}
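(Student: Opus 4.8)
The plan is to prove the two assertions separately, in each case identifying the relevant line of centers by exhibiting enough of its points or its direction.

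\textbf{The aspect path of centers.} By the discussion preceding the theorem, for a degenerate $\C$ with no parallel lines the rectangle locus is a pair of lines, one of them the aspect path of centers; and since the absence of parallel lines rules out dual pairs, Corollary~\ref{ap} shows this line is not the line at infinity. To pin it down I would compute two of its points, namely the centers of the rectangles $\phi(0/1)$ and $\phi(1/0)$ on the aspect path. By Theorem~\ref{big geo 2}, $\phi(0/1)$ has aspect ratio $0/1$, so Definition~\ref{ar def} (with $(u,v)=(0,1)$) forces its $A$- and $B$-vertices to coincide, and then the parallelogram condition forces its $C$- and $D$-vertices to coincide; as $A\not\parallel B$ and $C\not\parallel D$ these common vertices are $p_{AB}$ and $p_{CD}$, so the center of $\phi(0/1)$ is $\frac{1}{2}(p_{AB}+p_{CD})$. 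Likewise $\phi(1/0)$ has aspect ratio $1/0$, whence its $B$- and $C$-vertices coincide at $p_{BC}$ and its $A$- and $D$-vertices coincide at $p_{AD}$, so its center is $\frac{1}{2}(p_{AD}+p_{BC})$. Both points lie on the Gauss-Newton line by definition, so it remains only to see they are distinct. If they agreed, then $p_{AB}-p_{AD}=p_{BC}-p_{CD}$; the left side is a vector along $A$ and the right side a vector along $C$, so since $A\not\parallel C$ both must vanish, giving $p_{AB}=p_{AD}$ and $p_{BC}=p_{CD}$---two points each lying on $B$ and on $D$, which since $B\ne D$ are then equal and lie on all four lines, contradicting the standing assumption of non-concurrence. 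Hence the aspect path of centers is a line through two distinct points of the Gauss-Newton line, so it is the Gauss-Newton line.

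\textbf{The slope path of centers.} Now add the hypothesis that not both $A\perp C$ and $B\perp D$; together with the absence of parallel lines this says $\C$ has no twin pairs, so by Corollary~\ref{ap} the slope path is not the line at infinity and, by the discussion preceding the theorem, the slope path of centers is an honest affine line $\ell$ in $\k^2$. In the degenerate case the polynomials $\E$ and $\F$ of Notation~\ref{basic poly nota} are constants with $\E=1$, and the relations $e_1\F=f_2$ and $e_2\F=-f_1$ both hold---the first by the definition of $\F$, the second because $e_1f_1+e_2f_2=0$. I would compute the point at infinity of $\ell$ by evaluating the slope path at $[s:t]=[e_1:e_2]$, a legitimate point of $\P^1$ since $A\ne B$ gives $(e_1,e_2)\ne(0,0)$. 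A short substitution, using $e_1\F=f_2$, $e_2\F=-f_1$ and the definitions of the $e_i,f_j$, gives $\X(e_1,e_2)=0$; hence $\pi(e_1/e_2)$ is a rectangle at infinity on the slope path, and its center---a point at infinity of $\P^2$ with homogeneous coordinates $[\X_A+\X_C:\Y_A+\Y_C:0]$ at $(e_1,e_2)$---lies on $\ell$ and is therefore the point at infinity of $\ell$. The same substitutions, using $\X(e_1,e_2)=0$ for the second identity, yield
$$\X_A(e_1,e_2)+\X_C(e_1,e_2)=m_{AC}-b_Am_{BD},\qquad \Y_A(e_1,e_2)+\Y_C(e_1,e_2)=m_Dm_{AC}-b_Am_Cm_{BD}.$$
On the other hand $p_{AC}=(-b_A/m_{AC},-b_Am_C/m_{AC})$ and $p_{BD}=(-1/m_{BD},-m_D/m_{BD})$, so $m_{AC}m_{BD}(p_{AC}-p_{BD})=(m_{AC}-b_Am_{BD},m_Dm_{AC}-b_Am_Cm_{BD})$ is a direction vector for $G$, nonzero since $p_{AC}\ne p_{BD}$ by non-concurrence and $m_{AC},m_{BD}\ne0$ by the absence of parallel lines. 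Thus $\ell$ and $G$ share a point at infinity, i.e.\ $\ell$ is parallel to $G$.

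\textbf{Where the difficulty lies.} The conceptual skeleton is routine; the real effort is the explicit computation in the second part---$\X(e_1,e_2)=0$ and the two displayed identities---which is a short expansion once one has the uniform relations $e_1\F=f_2$, $e_2\F=-f_1$, but which requires remembering that $\F$ is given by $f_2/e_1$ or by $-f_1/e_2$ according to which of $e_1,e_2$ is nonzero and that degeneracy is exactly what makes these agree; the cases $e_1=0$ and $e_2=0$, where one also uses $f_1=0$ (resp.\ $f_2=0$) together with the fact that $f_1,f_2$ are not both zero here, want a separate look. A secondary matter is keeping the non-degeneracy facts $m_{AC},m_{BD},m_{CD}\ne0$, $p_{AC}\ne p_{BD}$ and $(e_1,e_2)\ne(0,0)$, and the fact that each path of centers is a line rather than a point, in play so that $\ell$, $G$ and the Gauss-Newton line are genuine lines.
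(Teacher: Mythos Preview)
Your proof is correct and follows the paper's strategy for the slope-path assertion essentially verbatim: both you and the paper evaluate the slope path at $[e_1:e_2]$, check that $\X(e_1,e_2)=0$ so that this is the unique point at infinity on the slope path, expand $\X_A+\X_C$ and $\Y_A+\Y_C$ at $(e_1,e_2)$ using $e_1\F=f_2$, $e_2\F=-f_1$, and compare with the direction of $G$ computed from $p_{AC}$ and $p_{BD}$. Your treatment is in fact a bit cleaner in that you verify $\X(e_1,e_2)=0$ by direct substitution, whereas the paper invokes Theorem~\ref{three} (which is really about $\sigma$, not $\X$).

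For the aspect-path assertion you take a different, more direct route than the paper. The paper argues via \emph{slope}: it first observes that the two degenerate rectangles with vertices $\{p_{AB},p_{CD}\}$ and $\{p_{AD},p_{BC}\}$ both have slope $m_F$ (since $E\perp F$), then uses Theorem~\ref{deg char}(5), Theorem~\ref{big geo}, and Corollary~\ref{union} to force both onto the aspect path (only one rectangle of slope $m_F$ can sit on the slope path). You bypass this by evaluating $\phi$ at $0/1$ and $1/0$: Theorem~\ref{big geo 2} guarantees these lie on the aspect path with aspect ratios $0$ and $\infty$, and the aspect-ratio equations immediately collapse the vertices to $p_{AB},p_{CD}$ and $p_{AD},p_{BC}$, your observation that a rectangle at infinity with these collapsed vertices would be the zero point ruling out that case. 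This shortcut avoids the chain of appeals to slope-path injectivity and to Corollary~\ref{union}, and it also lets you give an explicit check that the two midpoints are distinct, a point the paper leaves implicit.
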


\begin{proof}  By Theorem~\ref{deg char}, the slope and aspect paths are lines, so  the slope and aspect paths of centers are lines too. The
points  $p_{AB}$ and $p_{CD}$ are  the vertices of a degenerate rectangle in $\C$ with slope that is orthogonal to the slope of $E$, the line that passes through them.  Similarly, $p_{AD}$ and $p_{BC}$, which lie on $F$, are the vertices of a degenerate  rectangle in $\C$ with slope equal to $m_F$, the slope of $F$. Since  $E$ is orthogonal to $F$ by Theorem~\ref{deg char}, these two degenerate rectangles have slope $m_F$.  

 We claim  these two  rectangles lie on the aspect path. 
  By Theorem~\ref{deg char}, 
 every rectangle on the aspect path, including the rectangle at infinity, has slope $m_F$,  and so by Theorem~\ref{three}, the other rectangle at infinity has slope equal to the slope of $E$.
    By Theorem~\ref{big geo}, no two rectangles on the slope path have the same slope, so there is at most one rectangle on the slope path with slope $m_F$. The 
  point of intersection of the slope path and aspect path is a rectangle having slope $m_F$, so since every rectangle occurs on the slope path or the aspect path by Corollary~\ref{union},  both of these degenerate rectangles lie on the aspect path, and hence their centers, which are
  the midpoints of the degenerate rectangles, lie on the aspect path of centers. Since this path is a line,  it is the Gauss-Newton line.

 To prove the second claim of the theorem, 
suppose that at least one of the pairs  $A,C$ and $B,D$ is not orthogonal. 
By Lemma~\ref{Delta 0} the slope of the diagonal $E$ is $e_1/e_2$.
We have observed already that there are two rectangles at infinity, one having the slope of $E$ and the other the slope of $F$, and that 
the rectangle at infinity for the aspect path has slope equal to the slope of $F$.  Thus the rectangle at infinity for the slope path has slope $e_1/e_2$. 
  By Theorem~\ref{three}, $\X(e_1,e_2)=0$, and 
  by Theorem~\ref{big geo}, the rectangle at infinity on the slope path is $$\pi(e_1/e_2) = 
[\X_A(e_1,e_2):\Y_A(e_1,e_2):\cdots :\X_D(e_1,e_2):\Y_D(e_1,e_2):0].$$
The center of this rectangle is $$\left(
\frac{\X_A(e_1,e_2)+\X_C(e_1,e_2)}{2},
\frac{\Y_A(e_1,e_2)+\Y_C(e_1,e_2)}{2}
\right).$$
Since this rectangle is the point at infinity for the slope path of centers, the slope of the line that is the slope path of centers is $$
\left(\Y_A(e_1,e_2)+\Y_C(e_1,e_2)\right)/
\left(\X_A(e_1,e_2)+\X_C(e_1,e_2)\right).$$
Since $\C$ is degenerate and $A \ne B$, we have $\E(S,T) =1$ and  $\F(S, T) =\frac{f_2}{e_1} $ or $\F(S,T)  = -\frac{f_1}{e_2}$, whichever is defined. Since $e_1f_1+e_2f_2=0$,  regardless of whichever of these fractions is defined, we have $e_1\F(S,T) = f_2$.                    
Calculating, we see 
 \begin{eqnarray*}
 \X_A(e_1, e_2)+\X_C(e_1, e_2) & = & e_2m_C-e_1+e_1\F(S,T)
+ e_2m_D-e_1 \\
&= &  e_2(m_C+m_D)-2e_1+f_2 \\
& = & (b_A-1)(m_C+m_D)-2(b_Am_B-m_A)+  m_{DA}+b_Am_{BC} \\ & = & 
m_{DB}b_A+m_{AC}.
\end{eqnarray*}
Similarly, since $\X(e_1,e_2) =0$, we have 
  \begin{eqnarray*}
 \Y_A(e_1, e_2)+\Y_C(e_1, e_2) & = & 
m_A\X_A(e_1,e_2) + m_C\X_C(e_1,e_2)  \\
& = & m_A(e_2m_C-e_1+e_1\F(S,T))
+m_C( e_2m_D-e_1) \\
& = &  e_2(m_Am_C+m_Cm_D)-(m_A+m_C)e_1+m_Af_2 \\
& = & -b_Am_Bm_C+b_Am_Cm_D+m_Am_D-m_Cm_D \\
& = & m_Cm_{DB}b_A+m_Dm_{AC}
\end{eqnarray*}
The lines $A,C$ and the lines $B,D$ intersect, respectively, in the two points $$\left( \frac{b_A}{m_{CA}}, \frac{m_Cb_A}{m_{CA}}\right),  \:\:
\left( \frac{1}{m_{DB}}, \frac{m_D}{m_{DB}}\right),$$
so the slope of the diagonal $G$ is 
$\left(m_Cb_Am_{DB}+m_Dm_{AC}\right)/\left(b_Am_{DB}+m_{AC}\right),$
which is the same as the slope of the slope path of centers. 
\end{proof}

The restriction in the theorem that at least one of the pairs $A,C$ and $B,D$ is not  orthogonal is necessary. Otherwise,  $\C$ has twin pairs and the slope path is the line at infinity (Corollary~\ref{ap}), and so the slope path is not parallel to $G$.  
 
 We next interpret the rectangles at infinity for a degenerate configuration. To do so, it suffices to describe the slope and aspect ratio of these rectangles since this information uniquely determines the rectangle up to scale. 
 The
 four points of intersection $p_{AB},p_{BC},p_{CD},p_{AD}$  defined above 
are the vertices of a quadrilateral, and it is straightforward to see that the midpoints of  adjacent ``sides'' form the vertices of a parallelogram. (In the case $\k = \R$, this is known as the Varignon parallelogram of the quadrilateral.) 
The center of the parallelogram is the {\it centroid} of $\C$.  
The diagonals $E$ and $F$ are orthogonal if and only if this parallelogram is a rectangle.

 \begin{definition} Suppose $\C$ is a degenerate configuration in which no lines are parallel. The {\it center rectangle} is the rectangle in $\C$ whose center is the intersection of the two lines that comprise the rectangle locus. The {\it centroid rectangle} is the rectangle in $\C$ whose center is the centroid of  $\C$.
 \end{definition} 
 
\begin{figure}[h] \label{diagonals non}
 \begin{center}
 \includegraphics[width=0.9\textwidth,scale=.07]{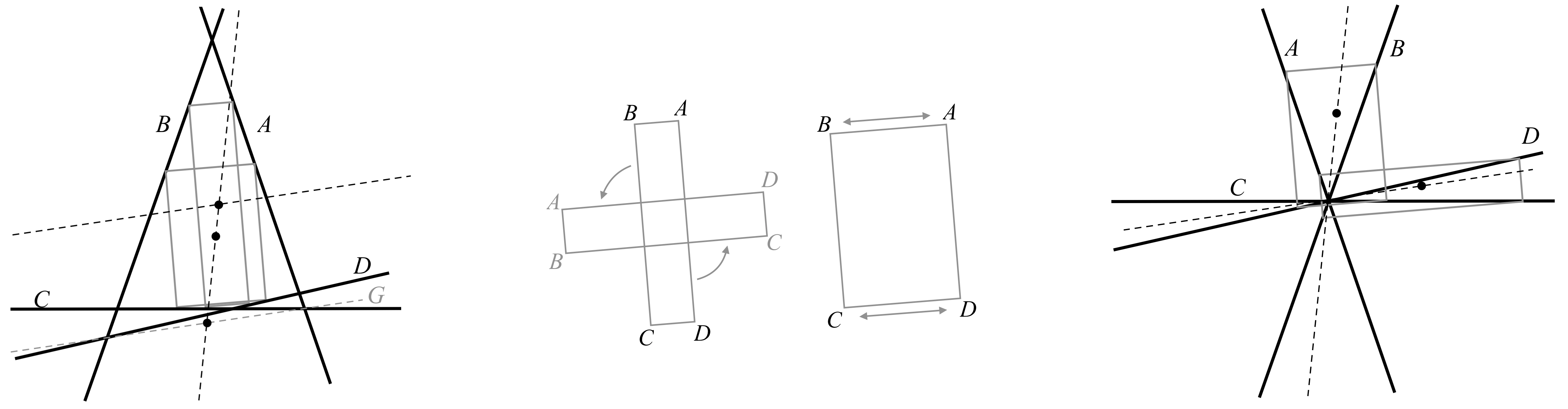} 
 \end{center}
 \caption{The first figure shows the centroid  and center rectangles (wide and narrow, respectively). The rectangles at infinity are shown in the fourth figure. The rectangle at infinity for the slope path is the center rectangle with a quarter turn, while the rectangle at infinity for the aspect path is the centroid rectangle with a flip. 
The middle two images illustrate the turn and flip.  In the the first figure the aspect path of centers is the Gauss-Newton line and the slope path is parallel to the diagonal $G$ (see Theorem~\ref{Newton}). 
  }
\end{figure}

 \begin{theorem} Suppose $\C$ is a degenerate configuration in which no lines are parallel. The rectangle at infinity for the slope path  has  the aspect ratio of the center rectangle and is orthogonal to it, while the rectangle at infinity for the aspect path has the slope of the centroid rectangle  
 and aspect ratio   the negative of the aspect ratio of the centroid rectangle. \end{theorem}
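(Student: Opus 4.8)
The plan is to handle the two halves of the statement separately, using throughout that for a degenerate configuration $e_1f_1+e_2f_2=0$ (Theorem~\ref{deg char}), so that the polynomials $\E,\F,\M,\N$ are constants. Hence by Theorem~\ref{big geo} every rectangle on the slope path has one and the same aspect ratio $m_{CD}\E/\F$, and by Theorem~\ref{big geo 2} every rectangle on the aspect path has one and the same slope, namely $m_F$ (equivalently, Theorem~\ref{deg char}(4),(5)). Since no lines are parallel, $\C$ has no dual pairs, so the aspect path is a genuine line by Corollary~\ref{ap}; I add the hypothesis, as in the second part of Theorem~\ref{Newton}, that not both $A\perp C$ and $B\perp D$, so that the slope path too is a genuine line and ``the rectangle at infinity'' of each path is a single point of $\P\C$. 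Finally recall, as in the proof of Theorem~\ref{Newton}, that the two rectangles at infinity have slopes $m_E$ and $m_F$, the aspect-path one having slope $m_F$ and the slope-path one slope $e_1/e_2=m_E$, and that $E\perp F$ by Theorem~\ref{deg char}(1).

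For the slope-path half, the crucial observation is that the center rectangle is the unique rectangle lying on \emph{both} paths. Indeed, since no lines are parallel the linear ``center'' map is a bijection from the set of rectangles in $\C$ onto the rectangle locus, which by Theorem~\ref{deg char} is the union of its two component lines, the slope path of centers and the aspect path of centers; the intersection of those two lines is by definition the center of the center rectangle, and by injectivity of the center map the center rectangle lies on both paths. Lying on the aspect path it has slope $m_F$; lying on the slope path it has that path's common aspect ratio $m_{CD}\E/\F$, which is therefore also the aspect ratio of the rectangle at infinity on the slope path; and that rectangle at infinity has slope $m_E$, orthogonal to $m_F$ by degeneracy. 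This is the first assertion.

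For the slope of the rectangle at infinity on the aspect path, put $N_1=\frac{1}{2}(p_{AB}+p_{CD})$ and $N_2=\frac{1}{2}(p_{AD}+p_{BC})$, two of the three Gauss--Newton midpoints. The centroid of $\C$ equals $\frac{1}{2}(N_1+N_2)$, so it lies on the Gauss--Newton line, which by Theorem~\ref{Newton} is the aspect path of centers; hence the centroid rectangle lies on the aspect path and shares its common slope $m_F$, which is the slope of the rectangle at infinity on the aspect path.

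The remaining point --- that the rectangle at infinity on the aspect path has aspect ratio the negative of that of the centroid rectangle --- is the main obstacle. I would first identify the unique rectangle of aspect ratio $0/1$ as the degenerate rectangle whose $A$- and $B$-vertices both equal $p_{AB}$ (forcing, by the parallelogram condition, its $C$- and $D$-vertices to both equal $p_{CD}$); its $B$--$C$ edge lies along $E$, so by the degenerate-slope convention of Definition~\ref{slope def} its slope is orthogonal to $m_E$, i.e.\ $m_F$, so it lies on the aspect path, with center $N_1$; symmetrically the unique rectangle of aspect ratio $1/0$ lies on the aspect path with center $N_2$. Thus $\phi(0/1)$ has center $N_1$, $\phi(1/0)$ has center $N_2$, while $\phi(u_0/v_0)$ is the rectangle at infinity, where $u_0/v_0$, its aspect ratio, is the unique zero of ${\c P}$. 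Since $\M,\N$ are constants, the center of $\phi(u/v)$ is $\bigl[({\c P}_A+{\c P}_C)(u,v):(\Q_A+\Q_C)(u,v):2{\c P}(u,v)\bigr]$, a triple of homogeneous linear forms in $U,V$; because the aspect path of centers is a genuine line, these forms are not all proportional, so $u/v\mapsto\text{center of }\phi(u/v)$ is a homography of $\P^1$ onto the projective closure $\overline{\ell}$ of the aspect path of centers $\ell$ that carries $u_0/v_0$ to the point at infinity of $\ell$ and carries $0/1,1/0$ to $N_1,N_2$. A one-line computation with such a homography --- two linear forms with the common zero $u_0/v_0$ are proportional, so in an affine coordinate on $\ell$ it has the shape $u/v\mapsto(\alpha u+\beta v)/(\gamma u+\delta v)$ with $\gamma u_0+\delta v_0=0$ --- shows it sends $-u_0/v_0$ to one half the sum of its values at $1/0$ and $0/1$, that is, to $\frac{1}{2}(N_1+N_2)$, the centroid. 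As $u_0,v_0\neq 0$ (otherwise the rectangle at infinity would be $\phi(0/1)$ or $\phi(1/0)$, which have finite centers), $-u_0/v_0\neq u_0/v_0$, so $\phi(-u_0/v_0)$ is the finite rectangle with center the centroid --- i.e.\ the centroid rectangle --- of aspect ratio $-u_0/v_0$. Hence the rectangle at infinity on the aspect path has aspect ratio $u_0/v_0=-(-u_0/v_0)$, the negative of the aspect ratio of the centroid rectangle. The obstacle is precisely the claim that the ``center of $\phi(u/v)$'' assignment extends to a homography onto $\overline{\ell}$ taking the zero of ${\c P}$ to the point at infinity of $\ell$ --- equivalently that those three linear forms have no common zero on $\P^1$, which follows from the aspect path of centers being a genuine line --- after which the computation just indicated finishes the proof.
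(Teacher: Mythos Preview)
Your argument is correct, and for the slope-path half it is essentially the paper's. For the aspect-path half you take a genuinely different route. The paper identifies the centroid rectangle concretely as the Varignon rectangle of the quadrilateral $p_{AB}p_{BC}p_{CD}p_{DA}$ (its vertices are the side-midpoints, which lie on $A,B,C,D$, and its $AB$-side is parallel to $F$), reads off its aspect ratio as $(q_{AD}-q_{BC})/(p_{AB}-p_{CD})=(m_{AB}f_1)/(m_{DA}m_{CB}e_2)$ by a direct coordinate computation, and then invokes Remark~\ref{cross} to show that the \emph{other} aspect ratio at infinity---the one on the aspect path---is the negative of this. Your approach instead uses that the centroid equals $\tfrac12(N_1+N_2)$ together with a clean M\"obius-transformation identity (a degree-one map with pole $u_0/v_0$ sends $-u_0/v_0$ to the average of its values at $0/1$ and $1/0$) applied to the center-of-$\phi$ map. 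Your method avoids all explicit coordinate manipulation and never touches Remark~\ref{cross}, at the cost of the extra verification that the center-of-$\phi$ map really is a homography onto $\overline{\ell}$; the paper's method is shorter once one accepts the Varignon identification but leans on a computation that obscures why the answer is so symmetric. Both arguments silently need the twin-pairs case excluded so that ``the'' rectangle at infinity on the slope path is a single point; you make this explicit, which is an improvement.
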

 
 The theorem is more easily stated in the case $\k = \R$: The rectangle at infinity for the slope path is the center rectangle with a quarter turn, while the rectangle at infinity for the aspect path is the centroid rectangle flipped and inscribed in the opposite of  whichever direction, clockwise  or counterclockwise, the centroid rectangle is inscribed.  
 
 \begin{proof} As in the proof of Theorem~\ref{Newton}, 
 the rectangle at infinity for the aspect path has slope $F$ and  the  rectangle at infinity for the slope path has slope equal to the slope of $E$.
 Also by Theorem~\ref{deg char}, every rectangle on the slope  
path has the same aspect ratio, so the rectangle at infinity for the slope path has the same aspect ratio as the center rectangle and is orthogonal to it. 
The aspect ratio of the centroid rectangle is $$(q_{AD}+q_{AB}-q_{AB}-q_{BC})/(p_{AB}+p_{BC}-p_{BC}-p_{CD})=(q_{AD}-q_{BC})/(p_{AB}-p_{CD}).$$ 
Using the calculation of these vertices from 
 the proof of Lemma~\ref{Delta 0}, we have 
 \begin{eqnarray*} q_{AD}-q_{BC} &= & \displaystyle{ \frac{m_Db_Am_{CB}-m_Cm_{DA}}{m_{DA}
 m_{CB}}} \:\:  = \:\:  \displaystyle{-\frac{f_1}{m_{DA}
 m_{CB}}}, \\
 p_{AB}-p_{CD} & =  &\displaystyle{\frac{1-b_A}{m_{AB}}}
=\displaystyle{-\frac{e_2}{m_{AB}}}.
\end{eqnarray*}
 Therefore, the aspect ratio of the centroid is $(m_{AB}f_1)/(m_{DA}m_{CB}e_2).$  
By Theorem~\ref{big geo}, 
the aspect ratio of the rectangle at infinity for the slope path is 
 $(-m_{CD}e_2)/f_1$. By Remark~\ref{cross}, $(-m_{AB}f_1)/(m_{DA}m_{CB}e_2)$ is  the aspect ratio of the rectangle at infinity for the aspect path, and this is the negative of the aspect ratio of the centroid rectangle. 
 \end{proof}


  \begin{figure}[h]  
 \begin{center}
 \includegraphics[width=0.55\textwidth,scale=.07]{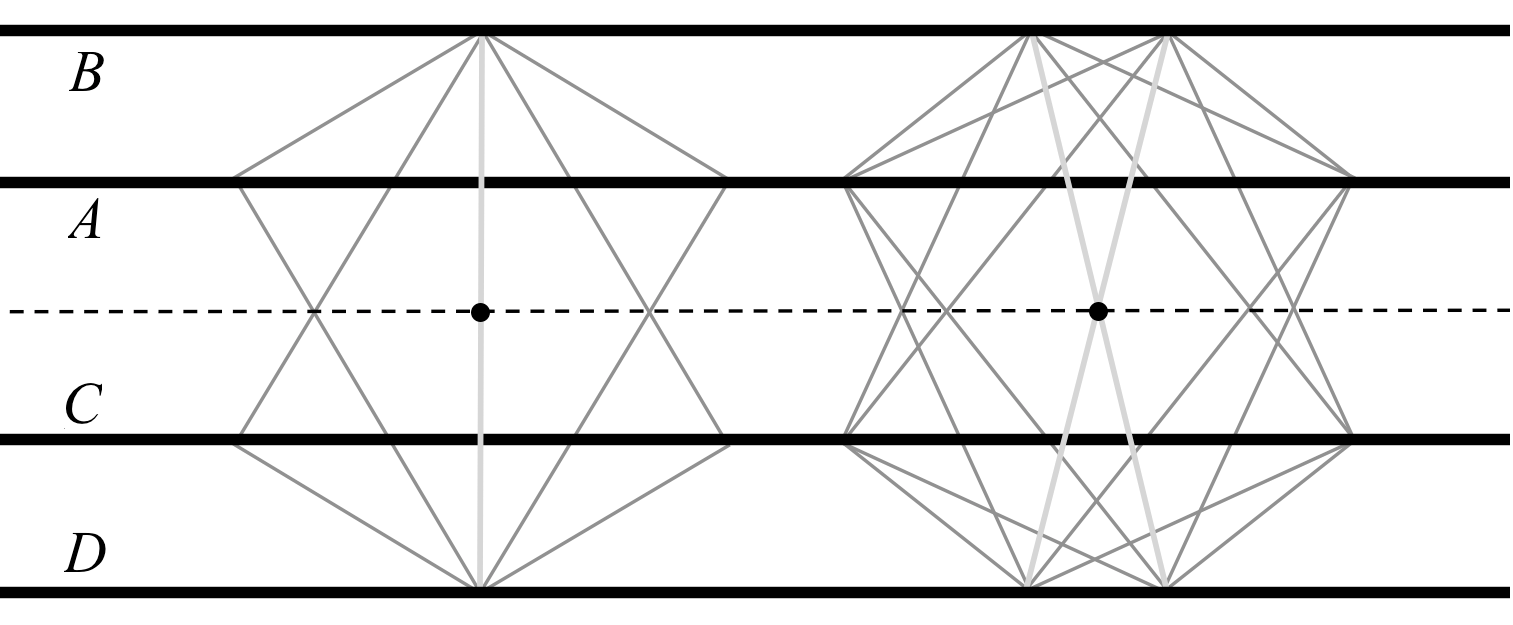} 
 \end{center}
 \caption{Rectangles inscribed in a configuration in which all four lines are parallel.  The rectangles are centered on the dotted line, with infinitely many rectangles centered on each such point. 
  Every point on the locus is the center of 4 rectangles having a fixed diagonal length, except when a diagonal is orthogonal to the four lines, in which case there are only 2 rectangles.
 %
  }
\end{figure}

\begin{remark} \label{four parallel} As discussed in the introduction, our approach throughout the paper assumes that at least two lines in a configuration are not parallel. The situation in which  all four lines $A,B,C,D$ are parallel and the field $\k$ does not have characteristic $2$ can be described as follows. If the pairs $A,C$ and $B,D$ do not share the same midline (i.e., the line consisting of midpoints of the points on either line), then there are no rectangles inscribed in ${\bf C}$. Otherwise, if the pairs  share the same midline, then the midline is the rectangle locus, and 
for each choice of $x_A,x_B \in \k$ with $x_A \ne x_B$, there is a unique rectangle inscribed in ${\bf C}$ whose vertices on  $A$ and $B$ are $(x_A,m_Ax_A+b_A)$ and $(x_B,m_Bx_B+1)$.  
Therefore, the  ``curve'' of rectangles  for this configuration is not a curve at all
but a surface in $\k^8$ parameterized by the points in $\k^2 \setminus \{(x,x):x \in \k\}.$ 
  In any case, this accounts for all the rectangles inscribed in ${\bf C}$. Figure 10 illustrates this situation. 
\end{remark}

\end{document}